\documentclass[preprint,12pt]{elsarticle}

\usepackage[T1]{fontenc}
\usepackage[utf8]{inputenc}
\usepackage{lmodern}
\usepackage{amsmath,amssymb,amsthm,mathtools}
\usepackage{microtype}
\usepackage[left=0.95in,right=0.95in,top=0.95in,bottom=0.95in]{geometry}
\usepackage{enumitem}
\usepackage[hidelinks]{hyperref}

\journal{Nonlinear Analysis}
\numberwithin{equation}{section}
\allowdisplaybreaks[4]
\newtheorem{theorem}{Theorem}[section]
\newtheorem{proposition}[theorem]{Proposition}
\newtheorem{lemma}[theorem]{Lemma}
\theoremstyle{remark}
\newtheorem{remark}[theorem]{Remark}

\def\R{{\mathbb R}}
\newcommand{\C}{\mathbb{C}}
\newcommand{\Sph}{\mathbb{S}}
\newcommand{\dotH}{\dot H}
\newcommand{\eps}{\varepsilon}
\newcommand{\Awindow}{\mathcal A}
\newcommand{\I}{\mathrm{i}}
\newcommand{\norm}[1]{\lVert #1\rVert}
\newcommand{\ip}[2]{\langle #1,#2\rangle}
\newcommand{\hpair}[2]{(#1\mid #2)}

\begin{document}

\begin{frontmatter}

\title{Large-Data Global Criteria for Two-Dimensional Critical-Energy Schrödinger Maps}

\author[aff1]{Pang-Hung Chung\fnref{fund1}}
\ead{penghongzhong@yahoo.com}

\author[aff2]{Dan Han\corref{cor1}}
\ead{dan.han@louisville.edu}

\cortext[cor1]{Corresponding author.}
\fntext[fund1]{The first author was supported by the Project ``Research on Nonlinear Partial Differential Equations'' (No. 2024KYCXTD018), Special Projects in Key Areas of Guangdong Province (No. ZDZX1088), and the Fund for Science and Technology of Guangzhou (No. 202102080428).}

\affiliation[aff1]{organization={Department of Applied Mathematics, Guangdong University of Education},
                  city={Guangzhou},
                  postcode={510640},
                  country={P. R. China}}

\affiliation[aff2]{organization={Department of Mathematics, University of Louisville},
                  city={Louisville},
                  state={KY},
                  postcode={40245},
                  country={USA}}

\begin{abstract}
We study large-data continuation below the critical energy for the two-dimensional sphere-valued Schrödinger map
\[
\partial_t u=u\times\Delta u,
\qquad u:[0,T_+)\times\R^2\to\Sph^2,
\qquad E(u_0)<4\pi.
\]
Working directly with the original equation through the complex tangential fields $W_m=\partial_m u+\I u\times\partial_m u$ and the ambient connections $\Gamma_\alpha$, we prove frequency-envelope bounds governed by the critical norm $\|\nabla u\|_{\ell^2L^4_{t,x}}$. As consequences, $T_+=\infty$ if $\|\nabla u\|_{\ell^2L^4([0,T_+)\times\R^2)}<\infty$, and also if a terminal one-window high-frequency tail $\Awindow(t)$ satisfies $\liminf_{t\uparrow T_+}\Awindow(t)<\eps_*$. Thus any finite-time failure must force persistent high-frequency concentration on all sufficiently late critical parabolic windows.
\end{abstract}

\begin{keyword}
Schrödinger maps \sep critical energy \sep large data \sep continuation criterion \sep frequency envelopes
\end{keyword}

\end{frontmatter}

\section{Introduction}

We consider the two-dimensional sphere-valued Schrödinger map equation
\begin{equation}\label{eq:SM-en}
\partial_t u = u \times \Delta u,
\qquad
u:I\times\R^2\to \Sph^2,
\end{equation}
whose conserved energy is
\begin{equation}\label{eq:energy-en}
E(u(t)) := \frac12\int_{\R^2} |\nabla u(t,x)|^2\,dx.
\end{equation}
Equation \eqref{eq:SM-en} and the energy \eqref{eq:energy-en} are invariant under the scaling
\[
u(t,x)\mapsto u(\lambda^2 t,\lambda x),
\]
and therefore the problem is energy-critical in two dimensions. In contrast with general semilinear dispersive equations, there are two simultaneous layers of criticality here. On the one hand, $\dotH^1_x$ is the scaling-critical regularity. On the other hand, the geometric constraint $u\in\Sph^2$ fixes the nonlinearity into a derivative, quasilinear dispersive structure. Consequently, any continuation theory must close the highest-order energy estimates without losing one spatial derivative.

In the two-dimensional $\Sph^2$-valued case, the number $4\pi$ is not an accidental technical threshold; it is the energy of the lowest nontrivial harmonic sphere. Hence, in the trivial homotopy class, the condition $E(u_0)<4\pi$ rules out bubble formation, which is the most basic geometric mechanism of failure. At the same time, the heat flow issued from each time slice converges to a constant state, and this is precisely the geometric reason why the caloric gauge can be organized globally in the subthreshold regime; see \cite{TaoCaloric,BIKd4,BIKT,SmithGeom,SmithCond}. In several settings with additional structure, one already has subthreshold or stronger global results, for instance in equivariant and near-harmonic regimes \cite{EquivariantDuke,MerleRaphaelRodnianski,GustafsonKangTsaiCPAM,GustafsonKangTsaiDuke}. For general large data without symmetry, however, the decisive issue for the global theory remains the control of a certain critical spacetime norm.

A precise formulation of this critical norm is
\begin{equation}\label{eq:def-control-en}
\|f\|_{\ell^2L^4_{t,x}(I\times\R^2)}
:=
\Biggl(
\sum_{k\in\mathbb Z}
\|P_k f\|_{L^4_{t,x}(I\times\R^2)}^2
\Biggr)^{1/2},
\end{equation}
where $P_k$ denotes the standard Littlewood--Paley projection to spatial frequencies $|\xi|\sim 2^k$; for instance one may take $P_k=\varphi(2^{-k}|D|)$ with a smooth annular cutoff $\varphi$. This is the controlling norm emphasized by Dodson--Smith \cite{DodsonSmith}. Structurally, the reason why this norm is natural is not merely that $L^4_{t,x}$ is scaling-critical for the Schrödinger flow. More importantly, in any differentiated form of the equation, the most delicate nonlinearity is always a first-order transport-type term. If one regards $W_x$ as the complex tangential representation of $\nabla u$, and the connection as a quadratic quantity obtained by integration along the heat flow, then the nonperturbative part may be schematically written as
\[
(\nabla^{-1}(W_x\overline{W_x}))\,\nabla W_x.
\]
After dyadic frequency decomposition, what lies exactly at the critical threshold is the bilinear interaction
\[
L^4_{t,x}\times L^4_{t,x}\to L^2_{t,x},
\]
and this is exactly reflected by the $\ell^2$-summation structure in \eqref{eq:def-control-en}. In other words, $\ell^2L^4$ is not merely a convenient posterior notation; it is the norm forced by the frequency geometry of the critical transport coupling.

Existing continuation theories often first rewrite the differentiated fields, under a suitable gauge, as a scalar covariant Schrödinger system, and then establish linear and bilinear estimates for that system \cite{BIKT,SmithCond,DodsonSmith}. Our point of departure is different: we would like to reconstruct this chain of reasoning as much as possible directly on the original equation and its pullback tangent bundle, without taking the scalar gauge system as a logical input. To this end, in the next section we introduce the complex tangential fields
\[
W_m:=\partial_m u+\mathrm{i}\,u\times\partial_m u,
\qquad m=1,2,
\]
and the ambient connection matrices
\[
\Gamma_\alpha Z:=u\,\langle \partial_\alpha u,Z\rangle,
\qquad Z\in \mathbb C^3.
\]
With these notations, the original equation can be written directly in the free Schrödinger form
\begin{equation}\label{eq:intro-direct-sch-en}
(\mathrm{i}\partial_t+\Delta)W_m
=
-2\Gamma_j\partial_jW_m-\Gamma_j^2W_m+\mathcal Q_m(W,\overline W,W),
\end{equation}
and the corresponding heat-flow slices satisfy
\begin{equation}\label{eq:intro-direct-heat-en}
(\partial_s-\Delta)W_m
=
2\Gamma_j\partial_jW_m+\mathcal V_m.
\end{equation}
The precise definitions of $W_m$ and $\Gamma_\alpha$ will be given at the end of this section; for the moment, it suffices to regard them as the complex tangential fields and ambient connection matrices which make the dominant quasilinear structure of the original equation explicit. This already shows that, even without first compressing the problem into a scalar gauge system, the genuine difficulty is still concentrated in the same nonperturbative transport term $2\Gamma_j\partial_jW_m$. Thus, the core continuation mechanism itself does not change; what changes is that it now has to be made explicit and closed directly at the level of the original equation.

The solutions treated here are first of all the maximal-lifespan local solutions provided by the classical local theory. More precisely, when
\[
u(0)-Q\in L_x^2,
\qquad
u(0)\in \dotH^1\cap\dotH^3,
\qquad
E(u_0)<4\pi,
\]
classical results yield a unique maximal-lifespan solution
\[
u\in L^\infty_{t,\mathrm{loc}}(I_{\max};\dotH^1\cap\dotH^3),
\qquad
I_{\max}=[0,T_+),
\]
see \cite{SulemSulemBardos,ChangShatahUhlenbeck,NahmodStefanovUhlenbeck,IK,McGahagan,NahmodShatahVegaZeng}. We do not re-establish this local theory here. Instead, we ask: what critical spacetime control is sufficient to prevent $T_+$ from being finite? And what type of local smallness near the terminal time is enough to rule out finite-time failure?

The strategy of the paper is to keep this continuation question tied to the original geometric equation. After deriving the bundle-valued Schrödinger and heat-flow equations directly from \eqref{eq:SM-en}, we build the dyadic resolution spaces and frequency envelopes in terms of the complex tangential fields. The comparison of the controlling norm, the free propagation estimates, and the frequency-localized product estimates are then proved in the text before they are used in the bootstrap. The bilinear interaction that is needed later is also written as a two-body equation for products of dyadic heat-flow slices. Thus the proof begins with the original equation, passes to the complex tangential-field formulation, establishes dyadic heat-flow bounds for those fields, and finally uses these bounds to obtain the continuation criterion. This organization is intended to make transparent which part of the argument is geometric and which part is dispersive.

The quantitative heart of the argument is a frequency-envelope estimate: on intervals where the controlling norm is small, each $X_k$-mode of $W_x$ is controlled by the initial frequency envelope together with the $\ell^2L^4$ contribution on the same interval. This estimate is proved later inside the bootstrap argument and then converted into a local continuation proposition. We record in the introduction the two global consequences that will be used as the main conclusions of the paper: the first globalizes the solution from the finiteness of the controlling norm over the whole lifespan, and the second localizes the same mechanism to a single critical parabolic window before a possible terminal time.

\begin{theorem}[Globalization from the controlling norm]\label{thm:globalize-en}
Let $u$ be the maximal-lifespan solution to \eqref{eq:SM-en} with initial time $t=0$, and write its lifespan interval as $I_{\max}=[0,T_+)$. Assume
\[
u(0)-Q\in L_x^2,
\qquad
u(0)\in \dotH^1\cap\dotH^3,
\qquad
E(u_0)<4\pi.
\]
If
\begin{equation}\label{eq:forward-finite-en}
\|\nabla u\|_{\ell^2L^4_{t,x}([0,T_+)\times\R^2)}<\infty,
\end{equation}
then
\[
T_+=+\infty,
\qquad
I_{\max}=[0,+\infty),
\qquad
u\in L^\infty_{t,\mathrm{loc}}\bigl([0,+\infty);\dotH^1\cap\dotH^3\bigr).
\]
In particular, for every $0<T<\infty$,
\[
\|\nabla u\|_{\ell^2L^4_{t,x}([0,T]\times\R^2)}<\infty.
\]
\end{theorem}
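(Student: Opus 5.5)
The plan is the standard contradiction argument. It reduces Theorem~\ref{thm:globalize-en} to the local continuation proposition that the paper derives from the frequency-envelope bootstrap. Suppose $T_+<\infty$ while \eqref{eq:forward-finite-en} holds. The first step is to partition $[0,T_+)$ into finitely many consecutive intervals $I_1,\dots,I_N$, each with $\|\nabla u\|_{\ell^2L^4_{t,x}(I_j\times\R^2)}\le\eps$, where $\eps$ is the smallness threshold of the bootstrap (depending only on $E(u_0)<4\pi$). This partition exists because the $\ell^2L^4$ norm is countably additive in the following sense: for each $k$, $t\mapsto\|P_k\nabla u\|_{L^4([0,t])}^4$ is continuous and nondecreasing. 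Consequently $\sum_k\|P_k\nabla u\|^2_{L^4(J)}$ tends to zero on intervals $J$ shrinking to a point, or toward $T_+$, by dominated convergence over $k$. Superadditivity, $\sum_j\|f\|^2_{\ell^2L^4(I_j)}\le C\|f\|^2_{\ell^2L^4(I)}$ in a suitable power form, then bounds the number of pieces. Here I would use $\|P_kf\|_{L^4(I)}^4=\sum_j\|P_kf\|_{L^4(I_j)}^4$ together with a greedy choice of cut points. Since the norm on $[0,T_+)$ is a finite quantity, the tail $\|\nabla u\|_{\ell^2L^4([\tau,T_+))}$ tends to $0$ as $\tau\uparrow T_+$, so the last interval can be taken to be $[\tau,T_+)$.

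The second step is to iterate the frequency-envelope estimate on each $I_j$. On an interval where the controlling norm is at most $\eps$, the paper's bootstrap gives $\|P_kW_x\|_{X_k(I_j)}\lesssim c_k(t_j)+(\text{$\ell^2L^4$ contribution})$. Here $c_k(t_j)$ is the frequency envelope of $W_x(t_j)$, built with slowly varying tails so as to control the $\dotH^1\cap\dotH^3$ level (i.e.\ the $\dotH^0$ and $\dotH^2$ norms of $W_x$). Because the $X_k$ norm controls $L^\infty_tL^2_x$, the envelope at the endpoint $t_{j+1}$ is bounded by a fixed constant $C$ times the envelope at $t_j$, including the weighted high-frequency parts $2^{2k}c_k$. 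After $N$ steps this gives
\[
\sup_{t\in[0,T_+)}\bigl(\|\nabla u(t)\|_{L^2}+\|\nabla u(t)\|_{\dotH^2}\bigr)\le C^N\bigl(\|u_0\|_{\dotH^1}+\|u_0\|_{\dotH^3}\bigr)<\infty.
\]
The passage between $W_x$ and $\nabla u$ uses $|W_m|^2=2|\partial_mu|^2$ and the algebra property in $\dotH^1\cap\dotH^3$ for sphere-valued maps of bounded energy. Energy conservation keeps every step below $4\pi$, so the subthreshold caloric structure is available on every slice.

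The third step closes the contradiction. The local theory quoted above gives a lifespan depending only on the $\dotH^1\cap\dotH^3$ size of the data, and $u(t)-Q\in L^2$ is propagated. With the uniform bound from the second step, solving from a time $t_0$ sufficiently close to $T_+$ extends $u$ past $T_+$, contradicting maximality. Hence $T_+=\infty$, and the local-in-time $\dotH^1\cap\dotH^3$ bounds follow from the same iteration on $[0,T]$. The final ``in particular'' clause is immediate from \eqref{eq:forward-finite-en} by restriction, since the $\ell^2L^4$ norm is monotone in the interval.

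The main obstacle is the second step. The bootstrap estimate must propagate regularity: the constant must not depend on the $\dotH^3$ size, so that the growth is merely geometric in $N$. This requires the envelope bound to be linear in $c_k$ for higher-regularity envelopes. The standard way to get this is to use envelopes with a decay parameter $\delta$ and to check that the trilinear and transport terms $2\Gamma_j\partial_jW_m$ respect $\delta$-slowly varying envelopes. I would take this linear dependence from the bootstrap proposition as stated; if it is only proved at the energy level, I would rerun its argument with the weighted envelope $2^{2k}c_k$.
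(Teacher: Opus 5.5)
Your proposal is correct, but it takes a longer route than the paper. The paper never partitions $[0,T_+)$. It uses only the tail observation you make at the end of your first step:
\begin{itemize}
\item it picks $t_*<T_+$ with $\|\nabla u\|_{\ell^2L^4([t_*,T_+)\times\R^2)}\le\eps_0/2$;
\item it then picks $t_1\in(t_*,T_+)$ with $T_+-t_1<T/2$;
\item it applies the local continuation Proposition~\ref{prop:cont-step-en} once on $[t_*,t_1]$, which extends the solution past $T_+$.
\end{itemize}

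Regularity on $[0,t_*]$ comes for free. The maximal solution already lies in $L^\infty_{t,\mathrm{loc}}(I_{\max};\dotH^1\cap\dotH^3)$, and $[0,t_*]$ is a compact subinterval of the lifespan. So your superadditivity bound (which is right: Minkowski gives $\sum_j\|f\|^4_{\ell^2L^4(I_j)}\le\|f\|^4_{\ell^2L^4(I)}$), the greedy cutting, and the $C^N$ iteration only repackage what happens on your last interval $[\tau,T_+)$.

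The ``main obstacle'' you identify is also unnecessary for the qualitative statement:
\begin{itemize}
\item In your own scheme $N$ is finite. Iterating any finite bound $\|u(t_{j+1})\|_{\dotH^1\cap\dotH^3}\le F(\|u(t_j)\|_{\dotH^1\cap\dotH^3})$ finitely many times stays finite, so no geometric-growth control is needed.
\item In the paper's one-step version, the extension time only has to be uniform in $t_1$. It is, because the a priori bound at $t_1$ depends on the fixed slice $u(t_*)$, in an arbitrary way, together with the small controlling norm.
\end{itemize}

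What your route buys is a quantitative bound on $\sup_{t<T_+}\|u(t)\|_{\dotH^1\cap\dotH^3}$ in terms of the data and $N\lesssim 1+(\|\nabla u\|_{\ell^2L^4}/\eps)^4$. That quantitative version does need the higher-regularity envelope bound to be linear in the initial envelope, as you say.

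Be aware that Proposition~\ref{prop:main-freq-en} as stated does not give propagation from the initial slice alone. Its right-hand side is $\upsilon_k(\sigma)=\alpha_k(\sigma)+\beta_k(\sigma)$, where $\alpha_k(\sigma)$ is the $2^{\sigma j}$-weighted $L^4$ envelope on the same interval. Both your Step 2 and the paper's proof of Proposition~\ref{prop:cont-step-en} therefore rest on the asserted bound $\|\nabla u(t_1)\|_{\dotH^\sigma}^2\lesssim\|\nabla u(t_0)\|_{\dotH^\sigma}^2+\|\nabla u\|^2_{\ell^2L^4}$. In other words, they rest on rerunning the bootstrap with weighted envelopes, which is exactly your fallback rather than the proposition ``as stated.''
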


The second main theorem comes from the same local continuation mechanism, but uses more localized information. For $t>0$ and an integer $k$, set
\[
J_{t,k}:=[t-2^{-2k},t],
\]
and by convention we only include those $k$ for which $J_{t,k}\subset[0,T_+)$. Define the terminal one-window high-frequency-tail function by
\begin{equation}\label{eq:A-window-en}
\Awindow(t):=
\inf_{k:\,J_{t,k}\subset[0,T_+)}
\Biggl(
\sum_{j\ge k-C_0}
\|P_j\nabla u\|_{L^4_{t,x}(J_{t,k}\times\R^2)}^2
\Biggr)^{1/2}.
\end{equation}
Here $C_0$ will be fixed later by the one-window continuation proposition. The quantity in \eqref{eq:A-window-en} measures whether, among all critical parabolic windows ending at time $t$, there is at least one whose high-frequency tail is already sufficiently small.

\begin{theorem}[One-window globalization]\label{thm:window-global-en}
Fix $E_0<4\pi$. Then there exist constants $C_0\ge 20$, $\eps_*=\eps_*(E_0)>0$, and $T_*=T_*(E_0)>0$ such that the following holds. Let $u$ be a maximal-lifespan solution to \eqref{eq:SM-en} with $I_{\max}=[0,T_+)$, and assume
\[
u(0)-Q\in L_x^2,
\qquad
u(0)\in \dotH^1\cap\dotH^3,
\qquad
E(u_0)\le E_0.
\]
If, in the sense of a finite terminal time,
\begin{equation}\label{eq:window-global-ass-en}
\liminf_{t\uparrow T_+}\Awindow(t)<\eps_*,
\end{equation}
then $T_+=+\infty$. Equivalently, if $T_+<\infty$, then there exists $\rho=\rho(E_0)>0$ such that for every $t\in(T_+-\rho,T_+)$ and every $k$ satisfying $J_{t,k}\subset[0,T_+)$,
\begin{equation}\label{eq:late-window-lower-en}
\Biggl(
\sum_{j\ge k-C_0}
\|P_j\nabla u\|_{L^4_{t,x}(J_{t,k}\times\R^2)}^2
\Biggr)^{1/2}
\ge \eps_*.
\end{equation}
\end{theorem}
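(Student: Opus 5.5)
The plan is to derive Theorem~\ref{thm:window-global-en} from the one-window local continuation proposition that the introduction announces (the local form of the frequency-envelope bootstrap). I take that proposition in the following form, and fix $C_0$ and $\eps_*$ from it. There are constants $C_0\ge 20$, $\eps_*=\eps_*(E_0)>0$, and a lower bound $T_*=T_*(E_0)>0$ on the continuation time, measured in units of $2^{-2k}$, with the following property. Suppose $E(u)\le E_0$, $J_{t_0,k}=[t_0-2^{-2k},t_0]\subset[0,T_+)$, and
\[
\Bigl(\sum_{j\ge k-C_0}\|P_j\nabla u\|_{L^4_{t,x}(J_{t_0,k}\times\R^2)}^2\Bigr)^{1/2}<\eps_*.
\]
Then the $\dotH^1\cap\dotH^3$ norms of $u$ stay bounded on $[t_0,\min(T_+,t_0+T_*2^{-2k}))$, with bounds depending only on $E_0$, $k$ and $\|u(t_0)\|_{\dotH^3}$. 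By the blow-up alternative of the classical local theory, this gives $T_+\ge t_0+T_*2^{-2k}$.

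The mechanism behind this bound is as follows. Low frequencies $j<k-C_0$ are controlled on a window of length $\lesssim 2^{-2k}$ by energy conservation and Bernstein:
\[
\|P_j\nabla u\|_{L^4(J)}\lesssim |J|^{1/4}2^{j/2}E_0^{1/2}\lesssim 2^{(j-k)/2}E_0^{1/2},
\]
and the sum over $j<k-C_0$ is then small once $C_0$ is large. High frequencies are small by hypothesis. The frequency-envelope bootstrap for $W_x$ then runs on the extended window and propagates the $\dotH^3$ bound.

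Granting the proposition, the argument is a contradiction. Suppose $T_+<\infty$ and $\liminf_{t\uparrow T_+}\Awindow(t)<\eps_*$. Choose $t_n\uparrow T_+$ and admissible $k_n$ with window tail $<\eps_*$ on $J_{t_n,k_n}$. Admissibility forces $2^{-2k_n}\le t_n$. The key step is to see that the windows cannot shrink too fast relative to $T_+-t_n$: the proposition gives
\[
T_+\ge t_n+T_*2^{-2k_n},\qquad\text{so}\qquad 2^{-2k_n}\le (T_+-t_n)/T_*.
\]
This is not yet a contradiction. To close, I would refine the proposition so that the window smallness also controls the critical norm $\|\nabla u\|_{\ell^2L^4}$ on the continuation interval $[t_n,t_n+T_*2^{-2k_n}]$ by $C\eps_*$ plus the low-frequency energy bound. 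I would then iterate: the next window, of the same scale ending at $t_n+T_*2^{-2k_n}$, again has small tail. This is the genuine obstacle. The smallness has to regenerate, and it does so because the bootstrap bound $\lesssim\eps_*$ on the new window improves to $<\eps_*$ for a suitably adjusted constant. Once the smallness regenerates at the same scale $2^{-2k_n}$, each iteration extends the lifespan by a fixed amount $T_*2^{-2k_n}$, so $T_+$ cannot be finite. Alternatively, the iteration gives finiteness of $\|\nabla u\|_{\ell^2L^4([0,T_+))}$, and Theorem~\ref{thm:globalize-en} then yields $T_+=\infty$.

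The equivalent formulation is the contrapositive. If $T_+<\infty$, then $\liminf\Awindow\ge\eps_*$. To obtain the uniform statement for all $t$ in $(T_+-\rho,T_+)$ and all admissible $k$, I would argue directly from the proposition. If some late $t$ and some $k$ had tail $<\eps_*$, the same iteration would give global existence. Hence no such pair exists for $t$ close to $T_+$, and one can take $\rho$ small, for instance $\rho=T_+/2$; the dependence of $\rho$ on $E_0$ only is inherited from the constants.

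The main step I expect to be delicate is this regeneration of window smallness at a fixed scale. It requires the bootstrap constant in the frequency-envelope estimate to be strictly better than the hypothesis, uniformly in $E_0<4\pi$. This is where the subthreshold caloric structure, namely heat-flow decay to a constant, enters.
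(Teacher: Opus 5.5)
\textbf{The gap: regeneration of window smallness.} After you obtain $T_+\ge t_n+T_*2^{-2k_n}$, you need the high-frequency tail on the next window of length $2^{-2k_n}$ to be below $\eps_*$ again. Nothing supplies this.

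The envelope bound $b_k\lesssim\upsilon_k$ of Proposition~\ref{prop:main-freq-en} holds only on an interval where the $\ell^2L^4$ norm is already small. On the continuation interval, the best available bound for $\|P_jW_x\|_{L^4_{t,x}}$ is in terms of the envelope $\beta_j$ of the data at the restart time. At frequencies $j\ge k_n-C_0$ this envelope may carry almost all of the energy. Adjusting a bootstrap constant cannot turn an $O(E_0^{1/2})$ bound into one of size $\eps_*$.

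Already for the free flow, consider a wave packet at frequency $2^m\gg 2^{k_n}$ arranged to refocus inside the next window. By dispersive decay, its $L^4$ tail on $J_{t_n,k_n}$ is of order $2^{(k_n-m)/2}$, but its tail on the next window is of order one. Your fallback through Theorem~\ref{thm:globalize-en} needs the same missing input. Separately, $\rho=T_+/2$ depends on the solution, whereas the statement requires $\rho=\rho(E_0)$.

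\textbf{How the paper argues, and why its shortcut is not available to you.} The paper never iterates. In Proposition~\ref{prop:one-scale-en}(i), Lemma~\ref{lem:low-auto-en} handles $j<k_*-C_0$ and the hypothesis handles the tail. Proposition~\ref{prop:cont-step-en} then extends the solution past $t_*$ by a time $T_*=T(E_0)$ that does not depend on $k_*$. A small-tail window ending at some $t_*\in(T_+-T_*/2,T_+)$ therefore contradicts maximality at once, with $\rho=T_*/2$.

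Your refusal to assume a scale-independent continuation time is well founded. The energy, the $\ell^2L^4$ norm and the windows are all invariant under $u\mapsto u(\lambda^2t,\lambda x)$. Worse, a time $T(E_0)$ applied on intervals $[t_1-\epsilon,t_1]$, where the $\ell^2L^4$ norm is automatically small, would by itself give $T_+=\infty$ for every solution. The proof of Proposition~\ref{prop:cont-step-en} only bounds $\|u(t_1)\|_{\dotH^3}$ in terms of the data, which yields a time depending on that norm, not on $E_0$ alone.

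Relatedly, fix $t<T_+$ and set $M_t:=\sup_{[0,t]}\|u\|_{\dotH^3}$. H\"older in time and Bernstein bound the tail on $J_{t,k}$ by $\lesssim 2^{3C_0/2}2^{-2k}M_t$, which tends to $0$ as $k\to\infty$. Hence $\Awindow\equiv 0$ on $(0,T_+)$ and \eqref{eq:window-global-ass-en} always holds. So the statement as written is equivalent to unconditional global regularity below $4\pi$ for this class of data. Supplying your regeneration step would amount to proving exactly that, so it cannot come out of the envelope bootstrap.
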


The logical relation between these two theorems is hierarchical. Theorem \ref{thm:globalize-en} says that if the local continuation mechanism can be triggered repeatedly near the end of the maximal lifespan by the small tail of the global controlling norm, then a finite terminal time cannot occur. Theorem \ref{thm:window-global-en} says further that the information needed to trigger local continuation can in fact be compressed into the high-frequency tail on a single dyadic parabolic window before the terminal time. Both rely on the frequency-envelope proposition proved later: that proposition is the quantitative estimate that truly has to be closed in the paper, rather than an independent main statement in the introduction. In this sense, the hierarchy of results in the present work may be summarized as follows: first establish frequency-envelope control directly at the level of the original equation, then derive local continuation from it, and finally obtain globalization from the controlling norm and one-window globalization.

The real analytical difficulties in the proof are threefold. First, if one does not first rewrite the problem as a scalar gauge system, then $W_m$ still takes values in $\mathbb C^3$ as a tangential field, and the nonlinearity $\Gamma_j\partial_jW_m$ is a genuinely matrix-valued first-order transport term rather than a simple semilinear perturbation. Thus any continuation argument must deal directly with the coupling between matrix connections and highest-order derivatives. Second, the continuation criteria genuinely depend on the $\ell^2L^4$ controlling norm, and we need to prove directly at the level of the original equation that it is equivalent to the dyadic control norm of the complex tangential field, while making the key comparison estimate, local smoothing estimate, and frequency-localized product estimate as self-contained as possible in the body of the paper. Third, the bilinear $L^2_{t,x}$ mechanism does not automatically appear as a scalar product at the level of the original equation. It must therefore be rewritten as an explicit two-body interaction proposition in order to close the bootstrap without appealing to a ready-made bundle-valued black box.

From the heat-flow point of view, these difficulties all point to the same critical scale. For fixed heat time $s>0$, the heat-flow slices satisfy estimates with the parabolic decay factor
\[
\|P_kW_x(s)\|_{L_t^\infty L_x^2}
+
\|P_kW_x(s)\|_{L^4_{t,x}}
\lesssim
(1+s2^{2k})^{-4}\,\upsilon_k,
\]
where $\upsilon_k$ is the frequency envelope generated by the initial data and the controlling norm. This factor shows that the dominant interaction at frequency $2^k$ can only leave a visible contribution near the heat scale $s\approx 2^{-2k}$. At the same time, the natural length of the corresponding time window is also $2^{-2k}$. Thus, the one-window globalization criterion in Theorem \ref{thm:window-global-en} is not an additional design feature; it is determined by the matching of free dispersion, heat-flow smoothing, and the transport term $2\Gamma_j\partial_jW_m$ at the same critical scale.

The argument developed here has three main features. First, we rewrite the differentiated critical structure directly in the original-equation form \eqref{eq:intro-direct-sch-en}--\eqref{eq:intro-direct-heat-en}, thereby keeping the geometric information inside the pullback tangent bundle and the ambient connection matrices, rather than compressing it in advance into scalar unknowns in a special gauge. Second, the comparison for the controlling norm, the free linear estimates, and the frequency-localized product estimates in Section 2 are all reproved in the body of the paper. In particular, the lateral Strichartz estimates and the two-dimensional Galilean maximal-function endpoint are also derived from local smoothing, maximal cross-section bounds, and interpolation. Third, Section 3 promotes the key bilinear mechanism into an explicit inhomogeneous two-body interaction proposition. This allows the subsequent bootstrap argument to proceed directly along the chain ``first-order transport term + heat-flow decay + two-body interaction,'' and ultimately yields the one-window globalization mechanism of Theorem \ref{thm:window-global-en}.

This paper does not claim to establish unconditional global well-posedness for general subthreshold large data. What we prove are two global solvability criteria. Their significance is that they reduce the large-data global problem to the control of either the controlling norm or the high-frequency tail on a single pre-terminal window. Once such control is supplied by some additional structure---for example, dispersed energy, a small critical Besov norm, or extra symmetry---Theorems \ref{thm:globalize-en} and \ref{thm:window-global-en} immediately convert that information into global existence. In this sense, the purpose of the paper is not to replace existing subthreshold global theories, but to extract the continuation mechanism in the passage from local to global theory and to make it as explicit as possible directly at the level of the original equation.

The remainder of the paper consists of three sections. Section 2 develops the geometric equations on the pullback tangent bundle, the function spaces, and the frequency envelopes, and formulates the main frequency-envelope estimate as a bootstrap proposition together with the local continuation statement needed later. Section 3 proves heat-flow decay, two-body interaction estimates, and perturbative bounds. Section 4 treats the nonperturbative transport term, closes the bootstrap, first proves the main frequency-envelope proposition, and then derives Theorems \ref{thm:globalize-en} and \ref{thm:window-global-en}; the terminal one-window continuation and ultraviolet-concentration statement will appear there as a proposition.

\medskip
\noindent\textbf{Notation and conventions.}
To avoid repetition later, we now fix several groups of frequently used notations. Throughout the paper we fix a constant state $Q\in\Sph^2$, and write $P_k$ for the standard Littlewood--Paley projections. For a tangential vector field $X$ along $u$, we set
\begin{equation}\label{eq:intro-basic-notation-en}
\Pi_uX:=X-\langle X,u\rangle u,
\qquad
J_uX:=u\times X,
\qquad
\nabla_\alpha X:=\Pi_u(\partial_\alpha X),
\qquad
\psi_m:=\partial_m u.
\end{equation}
We also define
\begin{equation}\label{eq:intro-complex-notation-en}
W_m:=\psi_m+\mathrm{i}J_u\psi_m,
\qquad
\Gamma_\alpha:=u\otimes(\partial_\alpha u)^\top-(\partial_\alpha u)\otimes u^\top.
\end{equation}
On $\mathbb C^3$, $\langle\cdot,\cdot\rangle$ always denotes the complex bilinear pairing, while $(\cdot\mid\cdot)$ denotes the Hermitian pairing. The rest of the paper will uniformly use \eqref{eq:intro-basic-notation-en}--\eqref{eq:intro-complex-notation-en}, and these notations will not be redefined.

\bigskip

\section{Original-Equation Framework}

In this section we assemble, in a directly usable form, the original-equation structure that will actually be used in the subsequent frequency analysis. In order, we first record the basic geometric identities for sphere-valued maps on the pullback tangent bundle, then derive the covariant differential system, and finally rewrite it in terms of the complex tangential fields $W_m$ and the ambient connection matrices $\Gamma_\alpha$. After this reformulation, the quasilinear part of the original equation will appear explicitly as the free Schrödinger operator together with a matrix-valued first-order transport term. All estimates in Sections 3 and 4 are, in essence, organized around this dominant structure.

\subsection{Geometric equations}

The notation has already been fixed in \eqref{eq:intro-basic-notation-en}--\eqref{eq:intro-complex-notation-en}. In this subsection we collect several basic relations that will be used repeatedly later. If $X\in T_u\Sph$, then
\[
J_u^2X=-X,
\qquad
\ip{J_uX}{J_uY}=\ip{X}{Y},
\qquad
\ip{J_uX}{Y}=-\ip{X}{J_uY}.
\]
Moreover, the tension field is
\[
\tau(u):=\nabla_j\psi_j
=\Delta u+|\nabla u|^2u,
\qquad
\partial_tu = J_u\tau(u).
\]
The connection is torsion-free, and therefore
\begin{equation}\label{eq:torsion-en}
\nabla_j\psi_k=\nabla_k\psi_j.
\end{equation}
The curvature tensor of the sphere satisfies
\begin{equation}\label{eq:curvature-en}
R(X,Y)Z = \ip{Y}{Z}X-\ip{X}{Z}Y.
\end{equation}

\begin{lemma}[Parallel complex structure]\label{lem:Jparallel-en}
For any tangential vector field $X$ along $u$, one has
\[
\nabla_\alpha(J_uX)=J_u(\nabla_\alpha X).
\]
\end{lemma}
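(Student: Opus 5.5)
The plan is a direct computation in the ambient space $\R^3$, using only the definitions $\nabla_\alpha X=\Pi_u(\partial_\alpha X)$ and $J_uX=u\times X$ together with $|u|=1$. Fix a tangential field $X$ along $u$, so $\ip{X}{u}=0$. First I will check that $J_uX$ is again tangential, since $\ip{u\times X}{u}=0$. This makes $\nabla_\alpha(J_uX)=\Pi_u\partial_\alpha(u\times X)$ meaningful. Then I expand by the Leibniz rule:
\[
\partial_\alpha(u\times X)=(\partial_\alpha u)\times X+u\times\partial_\alpha X .
\]

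\textbf{First term.} Since $|u|=1$, we have $\ip{\partial_\alpha u}{u}=0$, so $\partial_\alpha u$ and $X$ both lie in the tangent plane $u^\perp$. Their cross product is therefore parallel to the normal $u$, and so $\Pi_u\bigl((\partial_\alpha u)\times X\bigr)=0$.

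\textbf{Second term.} Decompose $\partial_\alpha X=\Pi_u(\partial_\alpha X)+\ip{\partial_\alpha X}{u}\,u$. The normal component is annihilated by $u\times\cdot$, because $u\times u=0$. Hence
\[
u\times\partial_\alpha X=u\times\nabla_\alpha X=J_u(\nabla_\alpha X).
\]
This vector is already tangential, so $\Pi_u$ leaves it unchanged.

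Combining the two terms gives $\nabla_\alpha(J_uX)=J_u(\nabla_\alpha X)$. There is no genuine obstacle here. The only point needing care is the observation that the cross product of two tangent vectors is normal, which is what kills the term involving $\partial_\alpha u$.

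If the lemma is intended for complexified tangential fields, the identical computation applies componentwise, since every operation used is $\C$-linear. This is relevant because $W_m=\psi_m+\mathrm{i}J_u\psi_m$ is complex-valued.
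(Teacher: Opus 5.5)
Your proposal is correct and follows essentially the same route as the paper: expand $\partial_\alpha(u\times X)$ by Leibniz, note that $(\partial_\alpha u)\times X$ is normal so its projection vanishes, and use that $u\times\cdot$ kills the normal part of $\partial_\alpha X$, so $\Pi_u(u\times\partial_\alpha X)=u\times\Pi_u(\partial_\alpha X)$. You simply spell out this last commutation and the $\C$-linearity for complex fields, which the paper leaves implicit.
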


\begin{proof}
Differentiate $u\times X$ and project onto the tangent space:
\[
\nabla_\alpha(J_uX)
=\Pi_u\bigl(\partial_\alpha u\times X + u\times \partial_\alpha X\bigr).
\]
Here $\partial_\alpha u\times X$ is orthogonal to $T_u\Sph$, and so its tangential projection vanishes. Hence
\[
\nabla_\alpha(J_uX)=u\times \Pi_u(\partial_\alpha X)=J_u\nabla_\alpha X.
\qedhere
\]
\end{proof}

The geometric identities from the preceding paragraph now lift the original equation directly to a covariant differential system on the pullback tangent bundle. Both the time evolution and the heat-flow evolution involve the covariant Laplacian together with cubic curvature terms; in particular, the highest-order derivatives appear in a geometrically natural form. This gives the following system.

\begin{proposition}[Covariant equations]\label{prop:covdiff-en}
Let $u$ be a smooth solution of \eqref{eq:SM-en}. Then
\begin{equation}\label{eq:cov-sch-psi-en}
\nabla_t\psi_m = J_u\nabla_j\nabla_j\psi_m + R(\psi_m,\psi_j)J_u\psi_j.
\end{equation}
If, for each fixed $t$, we denote by $u(s,t,x)$ the harmonic map heat flow with initial data $u(t,\cdot)$, then
\begin{equation}\label{eq:cov-heat-psi-en}
\nabla_s\psi_m = \nabla_j\nabla_j\psi_m + R(\psi_m,\psi_j)\psi_j.
\end{equation}
\end{proposition}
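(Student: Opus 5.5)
The plan is to derive both identities from the commutation rule for covariant derivatives on the pullback bundle. For any tangential field $X$ along $u$ and any two parameters $\alpha,\beta$ (among $t,s,x_1,x_2$),
\[
\nabla_\alpha\nabla_\beta X-\nabla_\beta\nabla_\alpha X=R(\psi_\alpha,\psi_\beta)X,
\]
with the curvature given by \eqref{eq:curvature-en}. We also use the torsion-free identity \eqref{eq:torsion-en} extended to all parameters, $\nabla_\alpha\psi_\beta=\nabla_\beta\psi_\alpha$ with $\psi_\alpha:=\partial_\alpha u$. This extension holds because $\partial_\alpha\partial_\beta u$ is symmetric and $\Pi_u$ is linear.

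The commutation rule for the embedded sphere needs a short justification. Write $\nabla_\alpha X=\partial_\alpha X+u\langle\psi_\alpha,X\rangle$; this uses $\langle X,u\rangle=0$ and hence $\langle\partial_\alpha X,u\rangle=-\langle X,\psi_\alpha\rangle$. Compute $\nabla_\alpha\nabla_\beta X$ directly and antisymmetrize. The second-derivative terms cancel, and what remains is $\psi_\alpha\langle\psi_\beta,X\rangle-\psi_\beta\langle\psi_\alpha,X\rangle$, which is exactly $R(\psi_\alpha,\psi_\beta)X$ by \eqref{eq:curvature-en}. This is a routine check.

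For the Schrödinger identity \eqref{eq:cov-sch-psi-en}, the first step uses torsion-freeness: $\nabla_t\psi_m=\nabla_m\psi_t$. The equation gives $\psi_t=\partial_t u=J_u\tau(u)=J_u\nabla_j\psi_j$. Lemma \ref{lem:Jparallel-en} lets $J_u$ pass through $\nabla_m$, so
\[
\nabla_t\psi_m=J_u\nabla_m\nabla_j\psi_j.
\]
Next we commute: $\nabla_m\nabla_j\psi_j=\nabla_j\nabla_m\psi_j+R(\psi_m,\psi_j)\psi_j=\nabla_j\nabla_j\psi_m+R(\psi_m,\psi_j)\psi_j$, using \eqref{eq:torsion-en} once more. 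Finally, $J_uR(\psi_m,\psi_j)\psi_j=R(\psi_m,\psi_j)J_u\psi_j$. This follows from \eqref{eq:curvature-en} together with the identities $\langle J_uX,J_uY\rangle=\langle X,Y\rangle$ and $\langle J_uX,Y\rangle=-\langle X,J_uY\rangle$. Written out, the left side is $\langle\psi_j,\psi_j\rangle J_u\psi_m-\langle\psi_m,\psi_j\rangle J_u\psi_j$ and the right side is $\langle\psi_j,J_u\psi_j\rangle\psi_m-\langle\psi_m,J_u\psi_j\rangle\psi_j$.

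This algebraic identification is the main point to verify. In dimension two it holds because $T_u\Sph^2$ is two-dimensional: $R(X,Y)$ acts as a multiple of $J_u$, since $R(X,Y)Z=K\,\omega(X,Y)J_uZ$ with $\omega(X,Y)=\langle J_uX,Y\rangle$ and $K=1$. Hence $R(X,Y)$ commutes with $J_u$. I would prove this via $\langle Y,Z\rangle X-\langle X,Z\rangle Y=\langle J_uX,Y\rangle J_uZ$, checking it on an orthonormal basis $\{e,J_ue\}$.

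The heat-flow identity \eqref{eq:cov-heat-psi-en} follows by the same argument without $J_u$. The heat flow is $\partial_s u=\tau(u)$, i.e. $\psi_s=\nabla_j\psi_j$. Then $\nabla_s\psi_m=\nabla_m\psi_s=\nabla_m\nabla_j\psi_j$, and the commutation above gives $\nabla_j\nabla_j\psi_m+R(\psi_m,\psi_j)\psi_j$. Smoothness in $(s,t,x)$ is taken from the local theory, so that all the derivatives used here exist.
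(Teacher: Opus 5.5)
Your proof follows the paper's route exactly: $\nabla_t\psi_m=\nabla_m\partial_tu$ by torsion-freeness, pull $J_u$ out with Lemma~\ref{lem:Jparallel-en}, commute $\nabla_m\nabla_j$ at the cost of $R(\psi_m,\psi_j)\psi_j$, apply torsion-freeness again, and repeat without $J_u$ for the heat flow. Your extra step $J_uR(\psi_m,\psi_j)\psi_j=R(\psi_m,\psi_j)J_u\psi_j$ is one the paper uses silently, and it is correct; however, your auxiliary formula has the wrong sign. With \eqref{eq:curvature-en} one has $\langle Y,Z\rangle X-\langle X,Z\rangle Y=\langle X,J_uY\rangle J_uZ=-\langle J_uX,Y\rangle J_uZ$ (test $X=Z=e$, $Y=J_ue$), though this does not affect your conclusion that $R(X,Y)$ is a multiple of $J_u$ and therefore commutes with it.
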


\begin{proof}
For \eqref{eq:cov-sch-psi-en}, by the original equation and Lemma \ref{lem:Jparallel-en},
\[
\nabla_t\psi_m = \nabla_m\partial_tu
= \nabla_m(J_u\nabla_j\psi_j)
= J_u\nabla_m\nabla_j\psi_j.
\]
Now commute $\nabla_m$ and $\nabla_j$:
\[
\nabla_m\nabla_j\psi_j
=\nabla_j\nabla_m\psi_j+R(\psi_m,\psi_j)\psi_j.
\]
By the torsion-free identity \eqref{eq:torsion-en},
\[
\nabla_j\nabla_m\psi_j=\nabla_j\nabla_j\psi_m,
\]
and this yields \eqref{eq:cov-sch-psi-en}.

For \eqref{eq:cov-heat-psi-en}, one only needs to substitute $\partial_su=\tau(u)=\nabla_j\psi_j$:
\[
\nabla_s\psi_m
=\nabla_m\nabla_j\psi_j
=\nabla_j\nabla_m\psi_j + R(\psi_m,\psi_j)\psi_j
=\nabla_j\nabla_j\psi_m + R(\psi_m,\psi_j)\psi_j.
\qedhere
\]
\end{proof}

We next pass from real tangential fields to the complex fields fixed in \eqref{eq:intro-complex-notation-en}. The field $W_m$ packages $\psi_m$ and $J_u\psi_m$ into a single complex tangential object; this allows the curvature terms, the energy density, and the transport structure appearing later to be written in a compact complex bilinear form. First, $W_m$ is a tangential $(1,0)$-type field satisfying
\begin{equation}\label{eq:JW-en}
J_uW_m = -\I W_m.
\end{equation}
Moreover, one has the explicit algebraic relations
\begin{equation}\label{eq:WW-en}
\hpair{W_m}{W_m}=2|\psi_m|^2,
\qquad
\Re\hpair{W_m}{W_j}=2\ip{\psi_m}{\psi_j},
\qquad
\Im\hpair{W_m}{W_j}=-2\ip{\psi_m}{J_u\psi_j}.
\end{equation}
The first identity gives the Hermitian length, while the latter two show that the real and imaginary parts in the $W$-variables correspond respectively to the Euclidean pairing between real tangential fields and the $J_u$-twisted pairing. Hence once one passes to $W_m$, the energy density, the mass flux, and the curvature cubic terms can all be uniformly written in terms of $\hpair{\cdot}{\cdot}$.

\begin{proposition}[Complex-field equations]\label{prop:cov-W-en}
The fields $W_m$ satisfy
\begin{equation}\label{eq:cov-sch-W-en}
(\I\nabla_t+\nabla_j\nabla_j)W_m
=\frac{\I}{2}\,\Im\hpair{W_m}{W_j}\,W_j,
\end{equation}
and
\begin{equation}\label{eq:cov-heat-W-en}
(\nabla_s-\nabla_j\nabla_j)W_m
=\frac12|W_j|^2W_m-\frac12\Re\hpair{W_m}{W_j}\,W_j.
\end{equation}
\end{proposition}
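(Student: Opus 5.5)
The plan is to obtain both identities directly from Proposition \ref{prop:covdiff-en} by applying the complex-linear operator $\mathcal P:=1+\I J_u$ to the real equations \eqref{eq:cov-sch-psi-en} and \eqref{eq:cov-heat-psi-en}. We extend $\nabla_\alpha$ and $J_u$ complex-linearly to $\C$-valued tangential fields. By Lemma \ref{lem:Jparallel-en}, $\mathcal P$ commutes with every $\nabla_\alpha$, so $\nabla_\alpha W_m=\mathcal P\nabla_\alpha\psi_m$ and $\nabla_j\nabla_jW_m=\mathcal P\nabla_j\nabla_j\psi_m$. The one algebraic fact we use repeatedly follows from $J_u^2=-1$ on $T_u\Sph$:
\[
\mathcal P J_u = J_u-\I = -\I\,\mathcal P .
\]
This is the same identity as \eqref{eq:JW-en}.

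\emph{Schrödinger part.} We apply $\mathcal P$ to \eqref{eq:cov-sch-psi-en}. Using $\mathcal PJ_u=-\I\mathcal P$ we get
\[
\nabla_tW_m=-\I\nabla_j\nabla_jW_m+\mathcal P\bigl(R(\psi_m,\psi_j)J_u\psi_j\bigr).
\]
Multiplying by $\I$ gives $(\I\nabla_t+\nabla_j\nabla_j)W_m=\I\,\mathcal P\bigl(R(\psi_m,\psi_j)J_u\psi_j\bigr)$. Next we evaluate the curvature term with \eqref{eq:curvature-en}:
\[
R(\psi_m,\psi_j)J_u\psi_j=\ip{\psi_j}{J_u\psi_j}\psi_m-\ip{\psi_m}{J_u\psi_j}\psi_j=-\ip{\psi_m}{J_u\psi_j}\psi_j .
\]
The first term vanishes by the antisymmetry $\ip{J_uX}{Y}=-\ip{X}{J_uY}$. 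Since the coefficient $\ip{\psi_m}{J_u\psi_j}$ is real, $\mathcal P$ acts only on $\psi_j$ and produces $-\ip{\psi_m}{J_u\psi_j}W_j$. By \eqref{eq:WW-en} this equals $\tfrac12\Im\hpair{W_m}{W_j}W_j$, which gives \eqref{eq:cov-sch-W-en}.

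\emph{Heat part.} Here $\mathcal P$ commutes with $\nabla_s$ as well, so \eqref{eq:cov-heat-psi-en} gives $(\nabla_s-\nabla_j\nabla_j)W_m=\mathcal P\bigl(R(\psi_m,\psi_j)\psi_j\bigr)$. By \eqref{eq:curvature-en} again,
\[
R(\psi_m,\psi_j)\psi_j=|\psi_j|^2\psi_m-\ip{\psi_m}{\psi_j}\psi_j ,
\]
with real coefficients. Applying $\mathcal P$ therefore yields $|\psi_j|^2W_m-\ip{\psi_m}{\psi_j}W_j$. Substituting $|\psi_j|^2=\tfrac12|W_j|^2$ and $\ip{\psi_m}{\psi_j}=\tfrac12\Re\hpair{W_m}{W_j}$ from \eqref{eq:WW-en} gives \eqref{eq:cov-heat-W-en}.

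\emph{Main obstacle.} The computation is purely algebraic, so the only delicate point is bookkeeping of conventions. We must check that the complexified $\nabla_\alpha$ (tangential projection of $\partial_\alpha$) is complex-linear and still commutes with $J_u$. We must also verify the relations \eqref{eq:WW-en}, including the sign of the imaginary part, with the Hermitian pairing $\hpair{\cdot}{\cdot}$ conjugate-linear in the slot that makes $\Im\hpair{W_m}{W_j}=-2\ip{\psi_m}{J_u\psi_j}$. I would verify this sign first by expanding $\hpair{\psi_m+\I J_u\psi_m}{\psi_j+\I J_u\psi_j}$ with $\ip{J_uX}{J_uY}=\ip XY$, since an error there would flip the sign of the right side of \eqref{eq:cov-sch-W-en}.
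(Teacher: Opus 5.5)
Your approach is exactly the paper's: apply $\mathcal P=1+\I J_u$ to Proposition \ref{prop:covdiff-en}, use $\mathcal PJ_u=-\I\mathcal P$, then the curvature formula and \eqref{eq:WW-en}. Your heat computation is correct, and so is your evaluation of the curvature term. The Schrödinger step, however, has a sign error. From $\nabla_tW_m=-\I\nabla_j\nabla_jW_m+\mathcal P\bigl(R(\psi_m,\psi_j)J_u\psi_j\bigr)$, multiplying by $\I$ gives $\I\nabla_tW_m=\nabla_j\nabla_jW_m+\I\mathcal P(\cdots)$, because $\I\cdot(-\I)=+1$. That is,
\[
(\I\nabla_t-\nabla_j\nabla_j)W_m=\frac{\I}{2}\,\Im\hpair{W_m}{W_j}\,W_j ,
\]
not $(\I\nabla_t+\nabla_j\nabla_j)W_m$. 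You checked the sign of $\Im\hpair{\cdot}{\cdot}$ carefully, but the bookkeeping actually goes wrong in the principal part.

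This cannot be repaired within the paper's conventions, because \eqref{eq:cov-sch-W-en} as written is false. Linearize at $u\equiv e_3$ with $u\approx e_3+(v_1,v_2,0)$ and set $w:=v_1+\I v_2$. The equation $\partial_tu=u\times\Delta u$ gives $\partial_tw=\I\Delta w$. Since $J_u\psi_m\approx(-\partial_mv_2,\partial_mv_1,0)$, we get $W_m\approx\partial_m\overline w\,(1,\I,0)$. Hence $(\I\partial_t-\Delta)W_m\approx0$, while $(\I\partial_t+\Delta)W_m\approx2\Delta W_m\neq0$. Yet \eqref{eq:cov-sch-W-en} would force the latter to vanish at first order, since its right side is cubic and $\nabla\approx\partial$.

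The paper's own proof makes the identical slip ("the principal part becomes $-\I\nabla_j\nabla_jW_m$, and thus $(\I\nabla_t+\nabla_j\nabla_j)W_m=\dots$"). So your proposal reproduces the published argument, error included. What the computation actually proves is the displayed identity above, or equivalently $(\I\nabla_t+\nabla_j\nabla_j)\overline{W_m}=\frac{\I}{2}\Im\hpair{W_m}{W_j}\,\overline{W_j}$. The heat identity \eqref{eq:cov-heat-W-en} is fine.

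The sign matters downstream. With the correct principal part, the zeroth-order combination in the ambient expansion is $(\partial_j\Gamma_j-\I\Gamma_t)W_m=2u\,\ip{\tau(u)}{W_m}$, not the one in \eqref{eq:cancel-en}. This does not vanish, so Proposition \ref{prop:ambient-W-en} does not follow as stated.
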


\begin{proof}
Apply the operator $I+\I J_u$ to \eqref{eq:cov-sch-psi-en} and use Lemma \ref{lem:Jparallel-en}. Since
\[
(I+\I J_u)J_u=-\I(I+\I J_u),
\]
the principal part becomes $-\I\nabla_j\nabla_jW_m$, and thus
\[
(\I\nabla_t+\nabla_j\nabla_j)W_m
=\I(I+\I J_u)R(\psi_m,\psi_j)J_u\psi_j.
\]
By the spherical curvature formula \eqref{eq:curvature-en},
\[
R(\psi_m,\psi_j)J_u\psi_j
=-\ip{\psi_m}{J_u\psi_j}\,\psi_j.
\]
Using \eqref{eq:WW-en} and $(I+\I J_u)\psi_j=W_j$, one obtains \eqref{eq:cov-sch-W-en}.

The heat-flow equation \eqref{eq:cov-heat-W-en} follows similarly from \eqref{eq:cov-heat-psi-en}, because
\[
R(\psi_m,\psi_j)\psi_j=|\psi_j|^2\psi_m-\ip{\psi_m}{\psi_j}\psi_j.
\qedhere
\]
\end{proof}

The covariant equations will be used through their ambient $\C^3$ form. Expanding the covariant system back into the ambient space rewrites the covariant Laplacian as the ordinary Laplacian plus connection terms, and it separates explicitly the matrix-valued first-order transport term that has to be closed. By \eqref{eq:intro-complex-notation-en}, for any $\C^3$-valued field $X$ one has
\[
\Gamma_\alpha X
= u\,\ip{\partial_\alpha u}{X}-(\partial_\alpha u)\,\ip{u}{X}.
\]
In particular, if $X$ is a tangential field along $u$, then
\[
\nabla_\alpha X = \partial_\alpha X+\Gamma_\alpha X.
\]
Moreover,
\begin{equation}\label{eq:Gammasize-en}
|\Gamma_m|\lesssim |\partial_mu|\sim |W_m|.
\end{equation}

\begin{lemma}[Connection algebra]\label{lem:gamma-basic-en}
If $Z$ is a complex tangential field along $u$, then
\begin{equation}\label{eq:gammaZ-body-en}
\Gamma_j Z=u\,\ip{\psi_j}{Z},
\qquad
\Gamma_j^2 Z=-\psi_j\,\ip{\psi_j}{Z},
\end{equation}
and therefore
\begin{equation}\label{eq:gammaZ-bound-en}
|\Gamma_jZ|\le |\psi_j|\,|Z|,
\qquad
|\Gamma_j^2Z|\le |\psi_j|^2|Z|.
\end{equation}
In addition,
\begin{equation}\label{eq:divGamma-body-en}
\partial_j\Gamma_j
=u\otimes(\Delta u)^\top-(\Delta u)\otimes u^\top.
\end{equation}
\end{lemma}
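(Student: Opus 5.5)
The three identities are pointwise algebra, and I would verify them directly from the definition $\Gamma_\alpha=u\otimes(\partial_\alpha u)^\top-(\partial_\alpha u)\otimes u^\top$. Two facts about the tangent bundle do all the work: $\langle u,u\rangle=1$, and $\langle u,\psi_j\rangle=0$ (obtained by differentiating the constraint). A complex tangential field $Z$ along $u$ also satisfies $\langle u,Z\rangle=0$ for the complex bilinear pairing, since both its real and imaginary parts are tangent.

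For the first identity in \eqref{eq:gammaZ-body-en}, apply the formula $\Gamma_jX=u\,\langle\psi_j,X\rangle-\psi_j\,\langle u,X\rangle$ recorded just before the lemma. Tangency of $Z$ kills the second term, so $\Gamma_jZ=u\,\langle\psi_j,Z\rangle$.

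For the second identity, apply $\Gamma_j$ to the vector $Y:=u\,\langle\psi_j,Z\rangle$, which is normal rather than tangential. Here $\langle\psi_j,u\rangle=0$ and $\langle u,u\rangle=1$. Hence $\Gamma_jY=-\psi_j\,\langle\psi_j,Z\rangle$, which is the stated formula for $\Gamma_j^2Z$.

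The bounds \eqref{eq:gammaZ-bound-en} follow from Cauchy--Schwarz. For the complex bilinear pairing with a real vector we have $|\langle\psi_j,Z\rangle|\le|\psi_j|\,|Z|$. Combining this with $|u|=1$ gives $|\Gamma_jZ|\le|\psi_j|\,|Z|$, and the same estimate applied to the second formula gives $|\Gamma_j^2Z|\le|\psi_j|^2|Z|$. No summation over $j$ is implied here; each index is treated separately, and summing afterwards gives the obvious bounds.

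For \eqref{eq:divGamma-body-en}, differentiate by the product rule and sum over $j$:
\[
\partial_j\Gamma_j=\partial_ju\otimes(\partial_ju)^\top+u\otimes(\Delta u)^\top-\Delta u\otimes u^\top-\partial_ju\otimes(\partial_ju)^\top .
\]
The two symmetric rank-one terms $\partial_ju\otimes(\partial_ju)^\top$ cancel exactly, and what remains is the claimed expression.

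I expect no real obstacle. The only point needing care is the first identity, where one must use that the complex bilinear pairing (not the Hermitian one) annihilates $u$ against $Z$. This is why the lemma is stated for complex tangential fields.
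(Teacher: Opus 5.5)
Your proposal is correct and matches the paper's proof step for step. Tangency of $Z$ kills the $\psi_j\,\ip{u}{Z}$ term; reapplying $\Gamma_j$ to the normal vector $u\,\ip{\psi_j}{Z}$ uses $\ip{\psi_j}{u}=0$ and $\ip{u}{u}=1$; the bounds follow from $|u|=1$ and Cauchy--Schwarz; and the divergence identity comes from the product rule, where the two $\psi_j\otimes\psi_j^\top$ terms cancel.

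Your closing caveat is unnecessary. Since $u$ is real, the Hermitian pairing of $Z$ with $u$ is just $\ip{u}{Z}$ or its conjugate, so nothing hinges on the bilinear-versus-Hermitian distinction there.
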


\begin{proof}
Since $Z$ is tangential, $\ip{u}{Z}=0$, and hence
\[
\Gamma_jZ
=u\,\ip{\psi_j}{Z}-\psi_j\,\ip{u}{Z}
=u\,\ip{\psi_j}{Z},
\]
which is the first identity in \eqref{eq:gammaZ-body-en}. Applying $\Gamma_j$ once more, and using $\ip{\psi_j}{u}=0$ and $\ip{u}{u}=1$, yields
\[
\Gamma_j^2Z
=\Gamma_j\bigl(u\,\ip{\psi_j}{Z}\bigr)
=-\psi_j\,\ip{\psi_j}{Z},
\]
which proves the second identity in \eqref{eq:gammaZ-body-en} and therefore \eqref{eq:gammaZ-bound-en}.

Differentiating the explicit formula for $\Gamma_j$ gives
\[
\partial_j\Gamma_j
=\psi_j\otimes\psi_j^\top+u\otimes(\partial_j\psi_j)^\top
-(\partial_j\psi_j)\otimes u^\top-\psi_j\otimes\psi_j^\top,
\]
and the quadratic terms cancel pairwise. This yields \eqref{eq:divGamma-body-en}.
\end{proof}

\begin{proposition}[Ambient equations]\label{prop:ambient-W-en}
After expanding the covariant equations from the previous subsection into the ambient space, the complex tangential fields $W_m$ satisfy
\begin{equation}\label{eq:ambient-W-en}
(\I\partial_t+\Delta)W_m
=-2\Gamma_j\partial_jW_m-\Gamma_j^2W_m
+\frac{\I}{2}\,\Im\hpair{W_m}{W_j}\,W_j.
\end{equation}
Along the harmonic map heat flow one also has
\begin{equation}\label{eq:ambient-heat-en}
(\partial_s-\Delta)W_m
=2\Gamma_j\partial_jW_m + V_m^{(s)},
\end{equation}
where
\begin{equation}\label{eq:Vsbound-en}
|V_m^{(s)}|
\lesssim \bigl(|\partial_j\Gamma_j|+|\Gamma_s|+|\Gamma_j|^2+|W_x|^2\bigr)|W_m|.
\end{equation}
\end{proposition}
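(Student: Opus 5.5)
The plan is to start from the covariant complex-field equations of Proposition \ref{prop:cov-W-en} and expand each covariant derivative into ambient form with $\nabla_\alpha X=\partial_\alpha X+\Gamma_\alpha X$ on tangential fields. The one subtlety is that $\nabla_j\nabla_jW_m$ requires $\nabla_j$ applied to the tangential field $\nabla_jW_m$. Since $\Gamma_j$ acts correctly on tangential fields and $\partial_jW_m+\Gamma_jW_m$ is tangential, I would write
\[
\nabla_j\nabla_jW_m=(\partial_j+\Gamma_j)(\partial_j+\Gamma_j)W_m
=\Delta W_m+2\Gamma_j\partial_jW_m+(\partial_j\Gamma_j)W_m+\Gamma_j^2W_m .
\]
Tangentiality of $\partial_jW_m+\Gamma_jW_m$ should be checked directly: the normal part of $\partial_jW_m$ is $u\langle u,\partial_jW_m\rangle=-u\langle\psi_j,W_m\rangle$, and by Lemma \ref{lem:gamma-basic-en} this is exactly $-\Gamma_jW_m$. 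Similarly $\nabla_tW_m=\partial_tW_m+\Gamma_tW_m$.

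For \eqref{eq:ambient-W-en}, I would substitute these expansions into \eqref{eq:cov-sch-W-en}. This gives
\[
(\I\partial_t+\Delta)W_m=-2\Gamma_j\partial_jW_m-\Gamma_j^2W_m-\bigl(\I\Gamma_t+\partial_j\Gamma_j\bigr)W_m+\tfrac{\I}{2}\Im\hpair{W_m}{W_j}W_j .
\]
The key step is to show that $(\I\Gamma_t+\partial_j\Gamma_j)W_m=0$. By Lemma \ref{lem:gamma-basic-en} and \eqref{eq:divGamma-body-en}, on tangential $Z$ we have $\partial_j\Gamma_jZ=u\langle\Delta u,Z\rangle=u\langle\tau(u),Z\rangle$, since the normal part $-|\nabla u|^2u$ pairs to zero with $Z$. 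Also $\Gamma_tZ=u\langle J_u\tau,Z\rangle=-u\langle\tau,J_uZ\rangle$. Taking $Z=W_m$ and using \eqref{eq:JW-en}, $J_uW_m=-\I W_m$, we get $\I\Gamma_tW_m=\I\cdot\I\,u\langle\tau,W_m\rangle=-u\langle\tau,W_m\rangle$, which cancels $\partial_j\Gamma_jW_m$. This cancellation is the point I expect to be the main obstacle. It depends on the $(1,0)$ structure of $W_m$ and on correct sign conventions for $J_u$ and the pairing, so I would verify each sign carefully, in particular the antisymmetry $\langle J_uX,Y\rangle=-\langle X,J_uY\rangle$ extended complex-bilinearly.

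For the heat flow \eqref{eq:ambient-heat-en}, I would perform the same expansion in \eqref{eq:cov-heat-W-en} with $\partial_s$ in place of $\I\partial_t$. No cancellation is needed here, and everything except $\Delta W_m$ and $2\Gamma_j\partial_jW_m$ goes into
\[
V_m^{(s)}:=\bigl(\partial_j\Gamma_j+\Gamma_j^2-\Gamma_s\bigr)W_m+\tfrac12|W_j|^2W_m-\tfrac12\Re\hpair{W_m}{W_j}W_j .
\]
The bound \eqref{eq:Vsbound-en} then follows pointwise from the triangle inequality, \eqref{eq:gammaZ-bound-en}, and Cauchy--Schwarz for the cubic terms, using $|W_j|\sim|\psi_j|$.
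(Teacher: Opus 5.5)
Your argument is the paper's argument step for step: expand $\nabla_j\nabla_j=(\partial_j+\Gamma_j)^2$ and $\nabla_t=\partial_t+\Gamma_t$ in \eqref{eq:cov-sch-W-en}, cancel $(\I\Gamma_t+\partial_j\Gamma_j)W_m$ using $J_uW_m=-\I W_m$, and put every zeroth-order heat-flow term into $V_m^{(s)}$. The mechanics are fine. Your check that $\partial_jW_m+\Gamma_jW_m$ is tangential is a step the paper skips. The heat-flow half, \eqref{eq:ambient-heat-en} with your explicit $V_m^{(s)}$ and the bound \eqref{eq:Vsbound-en}, is correct. The Schr\"odinger half, however, cannot be proved as written. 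The input \eqref{eq:cov-sch-W-en} has the wrong sign on its principal part, and both your derivation and the paper's inherit this error.

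Apply $I+\I J_u$ to \eqref{eq:cov-sch-psi-en} and use $J_u\nabla_j\nabla_jW_m=-\I\nabla_j\nabla_jW_m$. This gives $\nabla_tW_m=-\I\nabla_j\nabla_jW_m+\cdots$, hence $(\I\nabla_t-\nabla_j\nabla_j)W_m=\I(I+\I J_u)R(\psi_m,\psi_j)J_u\psi_j$. A linear check confirms the sign. Near $Q=(0,0,1)$ write $u\approx Q+(v_1,v_2,0)$ and $z:=v_1+\I v_2$. Then $\partial_tu=u\times\Delta u$ linearizes to $\I\partial_tz+\Delta z=0$, while $W_m\approx(\partial_m\bar z)(1,\I,0)$. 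So $(\I\partial_t-\Delta)W_m$ vanishes at linear order, and $(\I\partial_t+\Delta)W_m\approx 2\Delta W_m$, which is not quadratic.

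With the correct operator, the zeroth-order combination you must control is $(\partial_j\Gamma_j-\I\Gamma_t)W_m$. Your own computation $\Gamma_tW_m=\I u\ip{\tau}{W_m}$ shows that it equals $2u\ip{\tau}{W_m}=2(\partial_j\Gamma_j)W_m\neq0$. The correct identity is therefore
\[
(\I\partial_t-\Delta)W_m=2\partial_j(\Gamma_jW_m)+\Gamma_j^2W_m+\tfrac{\I}{2}\Im\hpair{W_m}{W_j}\,W_j .
\]
This is not an orientation artifact. For any consistent choice, for example working with $\overline{W_m}$ and $\I\partial_t+\Delta$, the $\I\Gamma_t$ term doubles $\partial_j\Gamma_j$ rather than cancelling it. The extra term $2u\ip{\Delta u}{W_m}$ contains a derivative of $\nabla u$. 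It is therefore of the same order as the transport term, not a harmless zeroth-order piece.

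In short, the cancellation you singled out as the key step is spurious. You were right to audit signs, but the sign that fails is the principal one in the input equation, not the one inside the cancellation.
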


\begin{proof}
Expand the covariant Laplacian in \eqref{eq:cov-sch-W-en}:
\[
\nabla_j\nabla_jW_m
=\Delta W_m+2\Gamma_j\partial_jW_m+(\partial_j\Gamma_j+\Gamma_j^2)W_m.
\]
Since $\nabla_tW_m=\partial_tW_m+\Gamma_tW_m$, it follows that
\[
(\I\partial_t+\Delta)W_m
=-2\Gamma_j\partial_jW_m-(\I\Gamma_t+\partial_j\Gamma_j+\Gamma_j^2)W_m
+\frac{\I}{2}\,\Im\hpair{W_m}{W_j}\,W_j.
\]
Thus it remains only to prove the cancellation
\begin{equation}\label{eq:cancel-en}
(\I\Gamma_t+\partial_j\Gamma_j)W_m=0.
\end{equation}
Since $W_m$ is tangential, Lemma \ref{lem:gamma-basic-en} and \eqref{eq:divGamma-body-en} give
\[
\Gamma_tW_m = u\,\ip{\partial_tu}{W_m},
\qquad
(\partial_j\Gamma_j)W_m = u\,\ip{\Delta u}{W_m}.
\]
Now $\partial_tu=J_u\Delta u$, and \eqref{eq:JW-en} implies that for any tangential vector $Y$,
\[
\ip{J_uY}{W_m} = -\ip{Y}{J_uW_m}=\I\ip{Y}{W_m}.
\]
Therefore
\[
\I\ip{\partial_tu}{W_m}+\ip{\Delta u}{W_m}
=\I\ip{J_u\Delta u}{W_m}+\ip{\Delta u}{W_m}
=\I(\I\ip{\Delta u}{W_m})+\ip{\Delta u}{W_m}=0,
\]
which proves \eqref{eq:cancel-en} and hence \eqref{eq:ambient-W-en}.

The heat-flow equation \eqref{eq:ambient-heat-en} is entirely analogous, except that the above cancellation is not needed there, so all zeroth-order terms are absorbed into $V_m^{(s)}$. The bound \eqref{eq:Vsbound-en} follows immediately from the expansion and \eqref{eq:Gammasize-en}.
\end{proof}

\begin{remark}
The significance of \eqref{eq:ambient-W-en} is that, after the cancellation \eqref{eq:cancel-en}, all truly nonperturbative terms that must be closed are concentrated in
\[
-2\Gamma_j\partial_jW_m.
\]
This is exactly parallel to the role played by $2A_j\partial_j\psi_m$ in the caloric gauge.
\end{remark}

We finally record the conservation law.

\begin{proposition}[Energy balance]\label{prop:energy-balance-en}
Define
\begin{equation}\label{eq:energy-density-en}
e[u]:=\frac12|\psi_x|^2=\frac14\sum_{m=1}^2\hpair{W_m}{W_m},
\qquad
q_j[u]:=-\ip{J_u\nabla_j\psi_m}{\psi_m}
=-\frac12\Im\hpair{\nabla_jW_m}{W_m}.
\end{equation}
Then one has the local conservation law
\begin{equation}\label{eq:energy-balance-en}
\partial_t e[u]+\partial_jq_j[u]=0.
\end{equation}
Equivalently, in terms of $W$,
\begin{equation}\label{eq:mass-balance-en}
\partial_t\frac14\sum_{m=1}^2\hpair{W_m}{W_m}
-
\frac12\partial_j\Im\sum_{m=1}^2\hpair{\nabla_jW_m}{W_m}=0.
\end{equation}
\end{proposition}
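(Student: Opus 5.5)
We prove \eqref{eq:energy-balance-en} by a direct computation in the real tangential variables, using only the equation $\partial_tu=J_u\tau(u)$, the torsion-free identity \eqref{eq:torsion-en} and Lemma \ref{lem:Jparallel-en}. We then translate the result into the $W$-variables using \eqref{eq:WW-en}.

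\emph{Time derivative of the density.} Since $\psi_m$ is tangential and $\nabla_t$ is metric compatible, we have
\[
\partial_t e[u]=\ip{\nabla_t\psi_m}{\psi_m}=\ip{\nabla_m\partial_tu}{\psi_m}.
\]
Here we used torsion-freeness in the form $\nabla_t\psi_m=\nabla_m\partial_tu$, exactly as in the proof of Proposition \ref{prop:covdiff-en}. Inserting $\partial_tu=J_u\nabla_j\psi_j$ and applying Lemma \ref{lem:Jparallel-en} gives
\[
\partial_t e[u]=\ip{J_u\nabla_m\nabla_j\psi_j}{\psi_m}.
\]

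\emph{Flux divergence.} Next we expand
\[
\partial_jq_j=-\ip{J_u\nabla_j\nabla_j\psi_m}{\psi_m}-\ip{J_u\nabla_j\psi_m}{\nabla_j\psi_m}.
\]
The second term vanishes pointwise because $J_u$ is skew on $T_u\Sph$. It therefore remains to show
\[
\ip{J_u(\nabla_m\nabla_j\psi_j-\nabla_j\nabla_j\psi_m)}{\psi_m}=0.
\]
By \eqref{eq:torsion-en} and the commutator computation from Proposition \ref{prop:covdiff-en}, the bracket equals $R(\psi_m,\psi_j)\psi_j$. Using \eqref{eq:curvature-en},
\[
R(\psi_m,\psi_j)\psi_j=|\psi_j|^2\psi_m-\ip{\psi_m}{\psi_j}\psi_j .
\]
Pairing with $\psi_m$ after applying $J_u$, the first contribution is $|\psi_j|^2\ip{J_u\psi_m}{\psi_m}=0$ by skewness. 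The second contribution is
\[
-\sum_{m,j}\ip{\psi_m}{\psi_j}\ip{J_u\psi_j}{\psi_m},
\]
which is a symmetric matrix contracted against an antisymmetric one, so it also vanishes. This proves \eqref{eq:energy-balance-en}. We expect this curvature cancellation to be the only step requiring care; everything else is bookkeeping.

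\emph{Translation to $W$.} The first identity of \eqref{eq:WW-en} converts $e[u]$ into $\tfrac14\sum_m\hpair{W_m}{W_m}$. For the flux, Lemma \ref{lem:Jparallel-en} gives
\[
\nabla_jW_m=\nabla_j\psi_m+\I J_u\nabla_j\psi_m,
\]
so $\nabla_jW_m$ has the same form as $W_m$, with $\psi_m$ replaced by the tangential field $\nabla_j\psi_m$. The polarized version of the third identity in \eqref{eq:WW-en} is valid for arbitrary tangential $X,Y$ by the same algebra, and yields
\[
\Im\hpair{\nabla_jW_m}{W_m}=-2\ip{\nabla_j\psi_m}{J_u\psi_m}=2\ip{J_u\nabla_j\psi_m}{\psi_m}=-2q_j .
\]
This gives $q_j=-\tfrac12\Im\hpair{\nabla_jW_m}{W_m}$, and substituting into \eqref{eq:energy-balance-en} yields \eqref{eq:mass-balance-en}. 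The only point to check here is the sign convention of the Hermitian pairing, i.e.\ which slot is conjugated; this is fixed by consistency with \eqref{eq:WW-en}.
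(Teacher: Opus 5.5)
Your proposal is correct and follows essentially the same route as the paper. It uses $\partial_t e=\ip{\nabla_t\psi_m}{\psi_m}$, an integration by parts in the principal term whose remainder vanishes by skewness of $J_u$, the symmetric--antisymmetric contraction for the curvature term, and the polarized form of \eqref{eq:WW-en} for the flux. The only cosmetic difference is that you keep $J_u$ outside the curvature term, writing $J_uR(\psi_m,\psi_j)\psi_j$ instead of the paper's $R(\psi_m,\psi_j)J_u\psi_j$ from \eqref{eq:cov-sch-psi-en}; this gives one extra term $|\psi_j|^2\ip{J_u\psi_m}{\psi_m}=0$ but avoids implicitly using that $R(\psi_m,\psi_j)$ commutes with $J_u$.
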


\begin{proof}
We first explain the second identity in \eqref{eq:energy-density-en}. For any tangential vectors $A,X$, write
$W_A:=A+\I J_uA$ and $W_X:=X+\I J_uX$. Then one has the algebraic identity
\[
-\ip{J_uA}{X}=-\frac12\Im\hpair{W_A}{W_X}.
\]
Substituting $A=\nabla_j\psi_m$ and $X=\psi_m$ gives
$q_j[u]= -\frac12\Im\hpair{\nabla_jW_m}{W_m}$.

Now start from Proposition \ref{prop:covdiff-en}:
\[
\partial_t e[u]
=\ip{\nabla_t\psi_m}{\psi_m}
=\ip{J_u\nabla_j\nabla_j\psi_m}{\psi_m}
+\ip{R(\psi_m,\psi_j)J_u\psi_j}{\psi_m}.
\]
Here both $m$ and $j$ are summed under the Einstein convention. The curvature term can be written as
\[
\ip{R(\psi_m,\psi_j)J_u\psi_j}{\psi_m}
=-\ip{\psi_m}{J_u\psi_j}\,\ip{\psi_j}{\psi_m}.
\]
Set
\[
a_{mj}:=\ip{\psi_m}{J_u\psi_j},\qquad s_{mj}:=\ip{\psi_j}{\psi_m}.
\]
Then $a_{mj}=-a_{jm}$ is an antisymmetric matrix, whereas $s_{mj}=s_{jm}$ is symmetric, and hence
\[
\sum_{m,j}a_{mj}s_{mj}=0.
\]
Therefore the total curvature contribution vanishes.

For the principal part,
\[
\ip{J_u\nabla_j\nabla_j\psi_m}{\psi_m}
=\partial_j\ip{J_u\nabla_j\psi_m}{\psi_m}
-\ip{J_u\nabla_j\psi_m}{\nabla_j\psi_m},
\]
and the last term vanishes because $\ip{J_uX}{X}=0$ for every tangential vector $X$. Thus
\[
\partial_t e[u]+\partial_jq_j[u]=0.
\]
Substituting the $W$-expressions for $e[u]$ and $q_j[u]$ from \eqref{eq:energy-density-en} yields \eqref{eq:mass-balance-en}.
\end{proof}

\begin{remark}
A single $m$-component does not in general satisfy an independent local mass conservation law; the genuine antisymmetric cancellation occurs only after summing over both $m$ and $j$ in the total energy flux.
\end{remark}

Later, in the heat-flow estimates and the terminal-window analysis, we shall again use both forms \eqref{eq:energy-balance-en} and \eqref{eq:mass-balance-en} from Proposition \ref{prop:energy-balance-en}.

\subsection{Spaces and linear estimates}

We first prove a multiplier lemma, based only on Bony decomposition, which compresses the action of variable-coefficient multiplication on
\[
\ell^2L_{t,x}^4(I\times\R^2)
\]
into $L^\infty$ and $\dot H_x^1$ control.

\begin{lemma}[Besov multiplier]\label{lem:multiplier-en}
Let $a:I\times\R^2\to \C^{m\times m}$ satisfy
\[
a\in L^\infty_{t,x},
\qquad
\nabla_x a\in L_t^\infty L_x^2.
\]
Then for any $\C^m$-valued field $g$, one has
\begin{equation}\label{eq:multiplier-en}
\norm{ag}_{\ell^2L_{t,x}^4(I\times\R^2)}
\lesssim
\Bigl(
\norm{a}_{L_{t,x}^\infty}
+\norm{\nabla_x a}_{L_t^\infty L_x^2}
\Bigr)
\norm{g}_{\ell^2L_{t,x}^4(I\times\R^2)}.
\end{equation}
The implicit constant is independent of the interval $I$, the frequency scale, and the matrix dimension.
\end{lemma}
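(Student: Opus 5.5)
We decompose the product with Bony's paraproduct decomposition in the spatial variable, at each fixed time:
\[
P_k(ag)=P_k\Bigl(\sum_{j}P_{<j-4}a\,P_jg\Bigr)+P_k\Bigl(\sum_{j}P_ja\,P_{<j-4}g\Bigr)+P_k\Bigl(\sum_{|j-j'|\le4}P_ja\,P_{j'}g\Bigr).
\]
We estimate the three pieces separately in $L^4_{t,x}(I\times\R^2)$ and then square-sum in $k$. All estimates are pointwise in $t$ with Hölder in $x$, followed by Hölder in $t$. This makes the constants independent of $I$. Matrix multiplication only enters through $|ag|\le|a||g|$ with the operator norm, so the constants are also independent of the dimension.

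\emph{Low--high piece.} Frequency support forces $|j-k|\le 3$. Since $\norm{P_{<j-4}a}_{L^\infty_x}\lesssim\norm{a}_{L^\infty_x}$ uniformly in $j$, we get
\[
\norm{P_k(P_{<j-4}a\,P_jg)}_{L^4_{t,x}}\lesssim\norm{a}_{L^\infty_{t,x}}\norm{P_jg}_{L^4_{t,x}}.
\]
The $\ell^2_k$ sum is then immediate, and this piece is bounded by $\norm a_{L^\infty}\norm g_{\ell^2L^4}$.

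\emph{High--low piece.} Again $|j-k|\le3$. Here $a$ carries the high frequency, so we use the $\dot H^1$ bound together with two-dimensional Bernstein, $\norm{P_ja}_{L^\infty_x}\lesssim 2^{j}\norm{P_ja}_{L^2_x}\lesssim\norm{P_j\nabla a}_{L^2_x}$. The low factor is controlled via Bernstein by $\norm{P_{<j-4}g}_{L^4_x}\le\sum_{l<j-4}\norm{P_lg}_{L^4_x}$. A bare sum over $l$ is not square-summable into $\ell^2$ without a gain, so we instead redistribute integrability:
\[
\norm{P_ja\,P_lg}_{L^4_x}\le\norm{P_ja}_{L^4_x}\norm{P_lg}_{L^\infty_x}\lesssim 2^{j/2}\norm{P_ja}_{L^2_x}\,2^{l/2}\norm{P_lg}_{L^4_x}\lesssim 2^{(l-j)/2}\norm{\nabla a}_{L^2_x}\norm{P_lg}_{L^4_x}.
\]
Taking $L^4_t$, with the $a$-norm in $L^\infty_t$, gives the factor $2^{-(j-l)/2}\norm{\nabla a}_{L^\infty_tL^2_x}\norm{P_lg}_{L^4_{t,x}}$. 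Schur's test on the kernel $2^{-(j-l)/2}\mathbf 1_{l<j}$ then gives the $\ell^2$ bound $\norm{\nabla a}_{L^\infty_tL^2_x}\norm g_{\ell^2L^4}$.

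\emph{High--high piece.} Now $j\ge k-5$. Bernstein on the output gives
\[
\norm{P_k(P_ja\,P_{j'}g)}_{L^4_x}\lesssim 2^{k/2}\norm{P_ja\,P_{j'}g}_{L^2_x}\le 2^{k/2}\norm{P_ja}_{L^4_x}\norm{P_{j'}g}_{L^4_x}.
\]
Combining this with $\norm{P_ja}_{L^4_x}\lesssim2^{-j/2}\norm{\nabla a}_{L^2_x}$ yields the decaying factor $2^{-(j-k)/2}$. Schur's test in $(k,j)$ again closes the $\ell^2_k$ bound with $\norm{\nabla a}_{L^\infty_tL^2_x}$.

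The main obstacle is the high--low piece. The naive $L^\infty\times L^4$ estimate loses square summability, and the fix is the $L^4\times L^\infty$ Bernstein trade, which produces the geometric gain $2^{-(j-l)/2}$. One should also check that the Littlewood--Paley conventions (the $P_{<j-4}$ truncation) are compatible with the annular cutoffs $\varphi$, but this is routine.
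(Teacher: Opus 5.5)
Your proof is correct. It uses the same Bony decomposition as the paper and treats the low--high piece identically. In the high--low and resonant pieces, however, you distribute the integrability differently, and the difference is substantive.

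The paper puts $P_ja$ in $L^\infty_x$ by Bernstein and bounds the $g$-factor uniformly by $\norm{g}_{\ell^2L^4_{t,x}}$ via the square function. It then square-sums $2^j\norm{P_ja}_{L^\infty_tL^2_x}$ over $j$. That produces $\bigl(\sum_j2^{2j}\norm{P_ja}_{L^\infty_tL^2_x}^2\bigr)^{1/2}$, which dominates $\norm{\nabla_xa}_{L^\infty_tL^2_x}$ but is not bounded by it when $a$ depends on time. For example, if $a(t)$ sits at a frequency $2^{k(t)}$ that sweeps through $N$ dyadic scales, the first quantity is $\sim N^{1/2}$ while the second is $\sim1$. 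For the resonant term the paper also asserts that only $|j-k|\le5$ contributes, which overlooks high$\times$high$\to$low outputs.

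Your choices avoid both problems. You pair $\norm{P_ja}_{L^4_x}\lesssim2^{-j/2}\norm{\nabla a}_{L^2_x}$ with $\norm{P_lg}_{L^\infty_x}\lesssim2^{l/2}\norm{P_lg}_{L^4_x}$, and you apply Bernstein to the output $P_k$ in the resonant term. This produces the off-diagonal gains $2^{-(j-l)/2}$ and $2^{-(j-k)/2}$. As a result, $a$ enters only through $\sup_t\norm{\nabla a(t)}_{L^2_x}$, and every dyadic summation is handled by Schur's test on the $g$ side. So your route actually proves the lemma under the stated hypothesis, whereas the paper's route, as written, needs the stronger $\ell^2_jL^\infty_t$ control of $2^jP_ja$.

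Both arguments tacitly use $g=\sum_lP_lg$ (for instance $g(t)\in L^2_x$), which holds for $g=\nabla u$ in Lemma~\ref{lem:compare-en}. For constant $g$ the right-hand side vanishes while the left-hand side need not.
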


\begin{proof}
Write $\Delta_j:=P_j$, $S_{j-4}:=P_{<j-4}$, and use Bony's decomposition
\[
ag=T_ag+T_ga+R(a,g),
\]
where
\[
T_ag:=\sum_j S_{j-4}a\,\Delta_j g,
\qquad
T_ga:=\sum_j S_{j-4}g\,\Delta_j a,
\qquad
R(a,g):=\sum_{|j-\ell|\le 3}\Delta_j a\,\Delta_\ell g.
\]
See \cite{Bony,BCD}.

First consider the low-high interaction $T_ag$. By frequency support, $P_kT_ag$ receives contributions only from terms with $|j-k|\le 4$, and therefore
\[
\norm{P_kT_ag}_{L_{t,x}^4}
\lesssim
\sum_{|j-k|\le 4}
\norm{S_{j-4}a\,\Delta_j g}_{L_{t,x}^4}
\le
\norm{a}_{L_{t,x}^\infty}
\sum_{|j-k|\le 4}\norm{\Delta_j g}_{L_{t,x}^4}.
\]
Taking the $\ell^2$ norm in $k$ and using finite overlap gives
\[
\norm{T_ag}_{\ell^2L_{t,x}^4}
\lesssim
\norm{a}_{L_{t,x}^\infty}\norm{g}_{\ell^2L_{t,x}^4}.
\]

Next consider the high-low interaction $T_ga$. Again by frequency support,
\[
P_kT_ga
=
\sum_{|j-k|\le 4}P_k\bigl(S_{j-4}g\,\Delta_j a\bigr).
\]
Hence
\[
\norm{P_kT_ga}_{L_{t,x}^4}
\lesssim
\sum_{|j-k|\le 4}
\norm{S_{j-4}g}_{L_{t,x}^4}\norm{\Delta_j a}_{L_t^\infty L_x^\infty}.
\]
By the Littlewood--Paley square-function estimate,
\[
\norm{g}_{L_{t,x}^4}
\lesssim
\Biggl\|
\Bigl(\sum_\ell |\Delta_\ell g|^2\Bigr)^{1/2}
\Biggr\|_{L_{t,x}^4}
\lesssim
\Bigl(\sum_\ell \norm{\Delta_\ell g}_{L_{t,x}^4}^2\Bigr)^{1/2}
=
\norm{g}_{\ell^2L_{t,x}^4},
\]
and so $\norm{S_{j-4}g}_{L_{t,x}^4}\le \norm{g}_{L_{t,x}^4}\lesssim \norm{g}_{\ell^2L_{t,x}^4}$. By Bernstein,
\[
\norm{\Delta_j a}_{L_t^\infty L_x^\infty}
\lesssim
2^j\norm{\Delta_j a}_{L_t^\infty L_x^2}.
\]
Therefore
\[
\norm{P_kT_ga}_{L_{t,x}^4}
\lesssim
\norm{g}_{\ell^2L_{t,x}^4}
\sum_{|j-k|\le 4}2^j\norm{\Delta_j a}_{L_t^\infty L_x^2}.
\]
Taking the $\ell^2$ norm in $k$ and again using finite overlap yields
\[
\norm{T_ga}_{\ell^2L_{t,x}^4}
\lesssim
\norm{\nabla_x a}_{L_t^\infty L_x^2}\,
\norm{g}_{\ell^2L_{t,x}^4}.
\]

Finally consider the resonant term $R(a,g)$. Let
\[
\widetilde\Delta_j:=\Delta_{j-1}+\Delta_j+\Delta_{j+1}.
\]
Then
\[
R(a,g)=\sum_j \Delta_j a\,\widetilde\Delta_j g,
\]
and $P_kR(a,g)$ receives contributions only from terms with $|j-k|\le 5$. Thus
\[
\norm{P_kR(a,g)}_{L_{t,x}^4}
\lesssim
\sum_{|j-k|\le 5}
\norm{\Delta_j a}_{L_t^\infty L_x^\infty}
\norm{\widetilde\Delta_j g}_{L_{t,x}^4}
\lesssim
\sum_{|j-k|\le 5}
2^j\norm{\Delta_j a}_{L_t^\infty L_x^2}
\norm{\widetilde\Delta_j g}_{L_{t,x}^4}.
\]
Taking the $\ell^2$ norm in $k$ and using finite overlap once more gives
\[
\norm{R(a,g)}_{\ell^2L_{t,x}^4}
\lesssim
\norm{\nabla_x a}_{L_t^\infty L_x^2}\,
\norm{g}_{\ell^2L_{t,x}^4}.
\]
Adding the three parts yields \eqref{eq:multiplier-en}.
\end{proof}

The next lemma pulls the whole argument back to the original quantity $\nabla u$.

\begin{lemma}[Norm comparison]\label{lem:compare-en}
Fix $E_0<4\pi$. Then there exists a constant $C(E_0)\ge 1$ such that, if $E(u)\le E_0$ and $u-Q\in L_x^2$, then for every compact interval $I$ one has
\begin{equation}\label{eq:compare-en}
C(E_0)^{-1}\norm{\nabla u}_{\ell^2L^4_{t,x}(I\times\R^2)}
\le
\norm{W_x}_{\ell^2L^4_{t,x}(I\times\R^2)}
\le
C(E_0)\norm{\nabla u}_{\ell^2L^4_{t,x}(I\times\R^2)}.
\end{equation}
\end{lemma}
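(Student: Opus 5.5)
We prove both inequalities in \eqref{eq:compare-en} by writing each of $W_x$ and $\nabla u$ as a pointwise matrix multiple of the other, and then applying the Besov multiplier Lemma \ref{lem:multiplier-en}. By \eqref{eq:intro-complex-notation-en},
\[
W_m=(I+\I\,[u]_\times)\,\partial_m u,
\]
where $[u]_\times$ denotes the $3\times3$ matrix $Z\mapsto u\times Z$. Conversely, $\partial_m u$ is tangential, so $\partial_m u=\Re W_m$. To stay inside the complex-linear framework of Lemma \ref{lem:multiplier-en}, we use instead
\[
\partial_m u=\tfrac12\bigl(I-\I[u]_\times\bigr)W_m .
\]
This holds because $J_u^2=-1$ on tangential vectors and $[u]_\times$ is real, so $(I-\I J_u)(I+\I J_u)X=2X$ for tangential $X$. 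Thus in both directions the relevant coefficient is $a=I\pm\I[u]_\times$, up to a factor $\tfrac12$.

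Next we check the hypotheses of Lemma \ref{lem:multiplier-en} for this $a$. Since $|u|=1$, we have $\norm{a}_{L^\infty_{t,x}}\le 1+1=2$. For the gradient, $\nabla_x a=\pm\I[\nabla u]_\times$, so
\[
\norm{\nabla_x a}_{L^\infty_tL^2_x}\lesssim\sup_{t\in I}\norm{\nabla u(t)}_{L^2_x}=\sqrt{2E(u)}\le\sqrt{2E_0},
\]
using conservation of energy (Proposition \ref{prop:energy-balance-en}) together with the assumption $E(u)\le E_0$. The hypothesis $u-Q\in L^2_x$ is only used to make the Littlewood--Paley pieces of $u$ well defined; since $\nabla a$ depends only on $\nabla u$, this causes no issue.

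Applying Lemma \ref{lem:multiplier-en} with $g=\nabla u$ gives
\[
\norm{W_x}_{\ell^2L^4}\lesssim\bigl(2+\sqrt{2E_0}\bigr)\norm{\nabla u}_{\ell^2L^4},
\]
and applying it with $g=W_x$ gives the reverse inequality with the same constant. We therefore take $C(E_0)\sim 1+\sqrt{E_0}$. The implicit constant of Lemma \ref{lem:multiplier-en} is independent of $I$ and of the frequency scale, so $C(E_0)$ is uniform over compact intervals. The threshold $E_0<4\pi$ is not actually needed here; it is retained only for uniformity with the other statements.

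\textbf{Main obstacle.} The only delicate point is that the right-hand norms must be finite a priori, so that the bounds are not vacuous. On a compact $I$ with $u\in L^\infty_t(\dotH^1\cap\dotH^3)$, Bernstein and the Sobolev embedding give $\nabla u\in\ell^2L^4$: the low frequencies are controlled by $2^{k/2}\norm{P_k\nabla u}_{L^2}|I|^{1/4}$, the high frequencies by the $\dotH^3$ bound, and the low-frequency sum is square-summable. The same argument applies to $W_x$, so both sides are finite and the estimate is meaningful.
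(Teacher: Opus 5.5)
Your forward inequality is correct and is essentially the paper's argument. The paper writes $W_x=(I+\I J_Q)\nabla u+\I\,(u-Q)\times\nabla u$ and applies Lemma~\ref{lem:multiplier-en} only to the coefficient $u-Q$, which has no constant part at infinity. You apply the lemma directly to $a=I+\I[u]_\times$. That is fine provided the low-pass factors $S_{j-4}a$ in the Bony decomposition reproduce constants; otherwise split off $I+\I[Q]_\times$ as the paper does.

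The reverse inequality, however, rests on a false identity. On tangential vectors $[u]_\times=J_u$ and $J_u^2=-1$, so
\[
(I-\I J_u)(I+\I J_u)X=X+J_u^2X=0,
\]
not $2X$. Equivalently, by \eqref{eq:JW-en}, $(I-\I[u]_\times)W_m=W_m-\I(-\I W_m)=0$, so your formula yields $\partial_m u=0$. This cannot be repaired by adjusting signs within a complex-linear framework. Since $(I+\I J_u)(J_u\psi)=-\I\,(I+\I J_u)\psi$, any pointwise complex-linear matrix $A$ with $AW_m=\partial_m u$ for all tangential $\partial_m u$ would force $J_u\psi=-\I\psi$ for real tangential $\psi$. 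That is impossible unless $\psi=0$.

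Recovering $\partial_m u$ from $W_m$ is intrinsically conjugate-linear: $\partial_m u=\Re W_m=\tfrac12(W_m+\overline{W_m})$, which you had already written down and then discarded. The paper uses exactly this identity. Since $P_k$ has a real even symbol, it commutes with complex conjugation, and so $\norm{P_k\partial_m u}_{L^4_{t,x}}\le\norm{P_kW_m}_{L^4_{t,x}}$ for each $k$. This gives the reverse bound with constant $1$, with no multiplier lemma and no use of the energy.

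Finally, your ``main obstacle'' paragraph is unnecessary. Each inequality is trivially true when its right-hand side is infinite, so no a priori finiteness is needed. In particular you need no $\dotH^3$ control, which the lemma does not assume.
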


\begin{proof}
By \eqref{eq:intro-basic-notation-en}--\eqref{eq:intro-complex-notation-en}, and writing $J_QX:=Q\times X$, one has
\[
W_x=(I+\I J_Q)\nabla u + \I\bigl((u-Q)\times \nabla u\bigr).
\]
Here $I+\I J_Q$ is a constant-coefficient zeroth-order multiplier, and hence
\[
\norm{(I+\I J_Q)\nabla u}_{\ell^2L_{t,x}^4}
\lesssim
\norm{\nabla u}_{\ell^2L_{t,x}^4}.
\]
On the other hand, pointwise one has $|u-Q|\le 2$, while
\[
\norm{\nabla_x(u-Q)}_{L_t^\infty L_x^2}
=
\norm{\nabla u}_{L_t^\infty L_x^2}
\le (2E_0)^{1/2}.
\]
Applying Lemma \ref{lem:multiplier-en} with
\[
a=u-Q,
\qquad
g=\nabla u,
\]
one obtains
\[
\norm{(u-Q)\times \nabla u}_{\ell^2L_{t,x}^4}
\lesssim
\bigl(1+E_0^{1/2}\bigr)\norm{\nabla u}_{\ell^2L_{t,x}^4}.
\]
Thus
\[
\norm{W_x}_{\ell^2L_{t,x}^4}
\lesssim_{E_0}
\norm{\nabla u}_{\ell^2L_{t,x}^4}.
\]

The reverse estimate is even more direct. Since
\[
\partial_m u=\frac12\bigl(W_m+\overline{W_m}\bigr),
\]
and the Littlewood--Paley projections are real Fourier multipliers, one has
\[
P_k\partial_m u=\frac12\bigl(P_kW_m+\overline{P_kW_m}\bigr).
\]
Hence for each $k$,
\[
\norm{P_k\partial_m u}_{L_{t,x}^4}
\le
\frac12\norm{P_kW_m}_{L_{t,x}^4}
+\frac12\norm{\overline{P_kW_m}}_{L_{t,x}^4}
=
\norm{P_kW_m}_{L_{t,x}^4}.
\]
Summing over $m=1,2$ and then taking the $\ell^2$ norm in $k$ yields
\[
\norm{\nabla u}_{\ell^2L_{t,x}^4}
\lesssim
\norm{W_x}_{\ell^2L_{t,x}^4}.
\]
Combining the two bounds proves \eqref{eq:compare-en}.
\end{proof}

We now turn to the analytic framework. The dyadic resolution space used below is assembled from Strichartz estimates, lateral Strichartz estimates, local smoothing, and Galilean maximal-function bounds. All norms first act componentwise on $\C^3$-valued fields, and then the Euclidean sum over the three ambient components is taken.

For $\theta\in\mathbb S^1$, let $H_\theta$ denote the orthogonal complement of $\theta$. For $p,q\in[1,\infty]$, define
\[
\norm{f}_{L_\theta^{p,q}}
:=
\Biggl(
\int_{\R}
\Bigl(
\int_{H_\theta\times\R}|f(t,x_1\theta+x_2)|^q\,dx_2\,dt
\Bigr)^{p/q}\,dx_1
\Biggr)^{1/p}.
\]
For $\lambda\in\R$, define the Galilean transform
\[
T_{\lambda\theta}f(t,x)
:= e^{-\frac{\I}{2}x\cdot(\lambda\theta)}e^{-\frac{\I}{4}t|\lambda|^2}
   f(t,x+t\lambda\theta),
\]
and the corresponding norm
\[
\norm{f}_{L_{\theta,\lambda}^{p,q}}:=\norm{T_{\lambda\theta}f}_{L_\theta^{p,q}}.
\]
Fix a large integer $K\gg1$, and let
\[
W_k:=\{\lambda\in[-2^k,2^k]:2^{k+2K}\lambda\in\mathbb Z\}.
\]
If $\theta\in\mathbb S^1$, then $P_{j,\theta}$ denotes a fixed smooth sectorial projection with finite overlap, whose Fourier support satisfies $|\xi|\sim2^j$ and $|\xi\cdot\theta|\sim2^j$. All constants below are independent of this finite-overlap decomposition.

If $I\subset[0,\infty)$ and
\[
\Xi_k:=\{\xi\in\R^2:|\xi|\in[2^{k-1},2^{k+1}]\},
\]
set
\[
L_k^2(I):=\{f\in L^2(I\times\R^2):\operatorname{supp}\widehat f(t,\xi)\subset I\times\Xi_k\}.
\]
For $f\in L_k^2(I)$ define
\begin{align*}
\norm{f}_{X_k^0(I)}
&:= \norm{f}_{L_t^\infty L_x^2}
 + \norm{f}_{L^4_{t,x}}
 + 2^{-k/2}\norm{f}_{L_x^4L_t^\infty}
 + 2^{-k/2}\sup_{\theta\in\mathbb S^1}\norm{f}_{L_{\theta,W_{k+40}}^{2,\infty}},
\\
\norm{f}_{X_k(I)}
&:= \norm{f}_{X_k^0(I)}
 + 2^{-k/6}\sup_{\theta\in\mathbb S^1}\norm{f}_{L_\theta^{3,6}}
 + 2^{k/6}\sup_{|j-k|\le 20}\sup_{\theta\in\mathbb S^1}\norm{P_{j,\theta}f}_{L_\theta^{6,3}}
\\
&\qquad
 + 2^{k/2}\sup_{|j-k|\le20}\sup_{\theta\in\mathbb S^1}\sup_{|\lambda|<2^{k-40}}
   \norm{P_{j,\theta}f}_{L_{\theta,\lambda}^{\infty,2}}.
\end{align*}
The space $Y_k(I)$ is defined by atomic decomposition:
\begin{align*}
\norm{f}_{Y_k(I)}
:= \inf_{f=f_1+f_2+f_3+f_4}
\Bigl(
&\norm{f_1}_{L_{t,x}^{4/3}}
+2^{k/6}\norm{f_2}_{L_{\hat\theta_1}^{3/2,6/5}}
+2^{k/6}\norm{f_3}_{L_{\hat\theta_2}^{3/2,6/5}}
\\
&+2^{-k/2}\sup_{\theta\in\mathbb S^1}\norm{f_4}_{L_{\theta,W_{k-40}}^{1,2}}
\Bigr),
\end{align*}
where $(\hat\theta_1,\hat\theta_2)$ is the standard basis of $\R^2$.

We now prove the linear estimates needed for free propagation: the standard $L^4_{t,x}$ Strichartz estimate, lateral Strichartz estimates, local smoothing, and the Galilean maximal-function endpoint. These estimates go back to the classical Strichartz and local-smoothing theory and its refinements; see, for instance, \cite{Strichartz,KPV,ChristKiselev,PlanchonVega}. The proof below uses only Fourier representation, $TT^*$, the coarea formula, interpolation, and cross-section estimates.

\begin{lemma}[Free propagation]\label{lem:free-linear-en}
Let
\[
S_k(t)f:=1_I(t)e^{\I(t-t_0)\Delta}P_k f.
\]
Then for every $f\in L_x^2(\R^2;\C^3)$,
\begin{align}
\norm{S_k f}_{L_t^\infty L_x^2}
+\norm{S_k f}_{L_{t,x}^4}
&\lesssim \norm{f}_{L_x^2},\label{eq:freeL4-en}\\
2^{-k/6}\sup_{\theta\in\mathbb S^1}\norm{S_k f}_{L_\theta^{3,6}}
+
2^{k/6}\sup_{|j-k|\le 20}\sup_{\theta\in\mathbb S^1}\norm{P_{j,\theta}S_kf}_{L_\theta^{6,3}}
&\lesssim \norm{f}_{L_x^2},\label{eq:freelateral-en}\\
2^{k/2}\sup_{|j-k|\le20}\sup_{\theta\in\mathbb S^1}\sup_{|\lambda|<2^{k-40}}
\norm{P_{j,\theta}S_kf}_{L_{\theta,\lambda}^{\infty,2}}
+
2^{-k/2}\sup_{\theta\in\mathbb S^1}\norm{S_kf}_{L_{\theta,W_{k+40}}^{2,\infty}}
&\lesssim \norm{f}_{L_x^2}.\label{eq:freesmooth-en}
\end{align}
In particular, by Bernstein,
\begin{equation}\label{eq:free-max-en}
2^{-k/2}\norm{S_kf}_{L_x^4L_t^\infty}
\lesssim
\norm{S_kf}_{L_t^\infty L_x^2}
\lesssim
\norm{f}_{L_x^2}.
\end{equation}
\end{lemma}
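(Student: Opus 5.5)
We prove the four groups of estimates in Lemma \ref{lem:free-linear-en} separately. By Galilean and scaling invariance we reduce to $t_0=0$ and to the model frequency $k=0$; the factors $2^{\pm k/6}$, $2^{\pm k/2}$ then come from the rescaling $f\mapsto f(2^k\cdot)$, $t\mapsto 2^{2k}t$. Since all norms act componentwise on $\C^3$, we may take $f$ scalar. The cutoff $1_I(t)$ only decreases every mixed norm, so we work on $\R_t$ throughout.

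For \eqref{eq:freeL4-en}, the $L_t^\infty L_x^2$ bound is unitarity of $e^{\I t\Delta}$. The $L^4_{t,x}$ bound is the two-dimensional admissible Strichartz estimate $(q,r)=(4,4)$, obtained by $TT^*$. The dispersive bound $\norm{e^{\I t\Delta}g}_{L^\infty}\lesssim |t|^{-1}\norm{g}_{L^1}$, interpolated with unitarity, gives $|t|^{-1/2}$ decay from $L^{4/3}$ to $L^4$, and Hardy--Littlewood--Sobolev in $t$ with exponent $1/2$ maps $L^{4/3}_t\to L^4_t$. Since $(4,4)$ is not the endpoint, no Keel--Tao argument is needed.

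For local smoothing, fix $\theta$ and write $x=x_1\theta+x_2$. Then
\[
S_0f(t,x)=\int e^{\I(x_1\xi_1+x_2\xi_2-t|\xi|^2)}\hat f(\xi)\,d\xi .
\]
For the $L_\theta^{\infty,2}$ bound we freeze $x_1$ and take the Fourier transform in $(t,x_2)$. Changing variables $\tau=|\xi|^2$ in $\xi_1$ introduces the coarea Jacobian $|2\xi_1|^{-1}$, and Plancherel gives
\[
\norm{P_{0,\theta}S_0f(\cdot,x_1,\cdot)}_{L^2_{t,x_2}}^2\lesssim\int|\hat f|^2|\xi_1|^{-1}d\xi\lesssim\norm{f}_2^2 ,
\]
since $|\xi_1|\sim1$ on the sector. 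The estimate is uniform in $x_1$. For $|\lambda|<2^{-40}$ the Galilean boost shifts $\xi_1$ by at most $2^{-40}$, so the same bound holds for $L^{\infty,2}_{\theta,\lambda}$.

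The dual maximal estimate $L_\theta^{2,\infty}$ at a fixed $\lambda$ follows by $TT^*$. The kernel
\[
K(x_1,t,x_2)=\int\chi(\xi)e^{\I(x\cdot\xi-t|\xi|^2)}d\xi
\]
satisfies
\[
\sup_{t,x_2}|K|\lesssim\langle x_1\rangle^{-2},
\]
by stationary phase: in the $t$-direction the phase $t|\xi|^2$ is not stationary away from $|x|\sim|t|$, and in the region $|x|\sim|t|$ we use the $|t|^{-1}\sim|x_1|^{-1}$ decay. This kernel is integrable, so Young's inequality in $x_1$ closes the $L^2_{x_1}$ bound. The supremum over $\lambda\in W_{40}$ is a supremum over roughly $2^{2K+40}$ boosts, which is harmless for fixed $K$ after the usual finite-overlap decomposition in $\xi\cdot\theta$. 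I expect the main obstacle to be this Galilean maximal endpoint at $L^{2,\infty}$, specifically the kernel decay uniform in the cross-sections. If the sharp decay fails, I would first try the cross-section bound with a frequency-localized $\sup_{x_2}$ via Bernstein in $x_2$, combined with a square-summation over unit $\lambda$-slabs.

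The lateral Strichartz estimates \eqref{eq:freelateral-en} then follow by interpolation. The estimate $L^{3,6}_\theta$ is obtained by complex interpolation between $L^{2,\infty}_\theta$ and $L^{4,4}=L^4_{t,x}$; the exponents match because $1/3=\tfrac13\cdot\tfrac12+\tfrac23\cdot\tfrac14$ and $1/6=\tfrac13\cdot0+\tfrac23\cdot\tfrac14$. Similarly, $L^{6,3}_\theta$ for $P_{j,\theta}S_0f$ interpolates between $L^{\infty,2}_\theta$ and $L^4_{t,x}$, since $1/6=\tfrac13\cdot0+\tfrac23\cdot\tfrac14$ and $1/3=\tfrac13\cdot\tfrac12+\tfrac23\cdot\tfrac14$. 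Rescaling these bounds produces the weights $2^{\mp k/6}$.

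Finally, \eqref{eq:free-max-en}: the right-hand inequality is just unitarity. The stated Bernstein inequality holds only trivially, since $L_x^4L_t^\infty$ is not controlled by $L^\infty_tL^2_x$ pointwise in $x$. Instead I would derive it from the $L^{2,\infty}_\theta$ maximal bound (for example with $\lambda=0$), interpolated with the trivial $L^\infty_{t,x}$ Bernstein bound $2^k\norm{f}_2$. This gives $2^{-k/2}\norm{S_kf}_{L_x^4L_t^\infty}\lesssim\norm{f}_2$.
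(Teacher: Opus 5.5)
\textbf{Verdict: genuine gap.} Your proofs of \eqref{eq:freeL4-en} and of the local-smoothing half of \eqref{eq:freesmooth-en} are the paper's. For \eqref{eq:freeL4-en} that is $TT^*$ with Hardy--Littlewood--Sobolev in $t$; for local smoothing it is coarea in $\xi_1$, Plancherel in $(t,x_2)$, then a boost. Your derivation of $L^{6,3}_\theta$ by interpolating local smoothing with $L^4_{t,x}$ is a valid alternative to the paper's direct lateral $TT^*$. The gap is in the Galilean maximal endpoint, and it propagates to two later steps.

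\textbf{The maximal endpoint.} Your bound $\sup_{t,x_2}|K(t,x_1,x_2)|\lesssim\langle x_1\rangle^{-2}$ is false. Your own heuristic gives $|K|\sim|t|^{-1}$ on $|x|\sim|t|$. Choosing $t\sim|x_1|$ then gives $\sup_{t,x_2}|K|\sim\langle x_1\rangle^{-1}$, which is not integrable.
\begin{itemize}
\item On a time interval of length $L$, your Young/$TT^*$ argument therefore only yields the constant $(\log L)^{1/2}$.
\item After rescaling frequency $2^k$ to $1$, the interval $I\subset(0,2^{2K}]$ has length up to $2^{2(K+k)}$, so the bound is not uniform in $k$.
\item This loss is exactly why the lemma uses the Galilean space $L^{2,\infty}_{\theta,W_{k+40}}$ and why Proposition \ref{prop:linear-en} restricts to $I\subset(0,2^{2K}]$. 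The unboosted bound is the $(2,\infty)$ endpoint of the lateral Strichartz family $\frac1p+\frac1q=\frac12$, the two-dimensional analogue of the forbidden $L^2_tL^\infty_x$ endpoint.
\item So the cutoff $1_I$, which you discard by working on $\R_t$, cannot be dropped here.
\end{itemize}
The boosts are also not a harmless supremum. $L^{2,\infty}_{\theta,W}$ is an atomic sum space; this is how Lemma \ref{lem:product-en} uses it. You must therefore exhibit a decomposition $S_0f=\sum_{\lambda\in W_{40}}u_\lambda$ with $\sum_\lambda\|u_\lambda\|_{L^{2,\infty}_{\theta,\lambda}}\lesssim\|f\|_{L^2}$. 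Your fallback of square-summing over $\lambda$-slabs does not fit this $\ell^1$ structure.

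The paper attempts the decomposition by cutting $2\xi_1$ into strips of width $\delta=2^{-2K-40}$, boosting each to its centre, and summing. Be warned that its per-strip gain \eqref{eq:gal-piece-en} cannot hold as written either. At a single time, a strip with flat profile $\delta^{-1/2}\chi_\lambda(2\xi_1)\phi(\xi_2)$, with $\phi$ a fixed bump, already has $L^2_{x_1}L^\infty_{x_2}$ norm comparable to its $L^2$ norm. This endpoint needs a more careful argument than either sketch supplies.

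\textbf{Knock-on effects.}
\begin{itemize}
\item \emph{The $L^{3,6}_\theta$ bound.} You interpolate the unboosted $L^{2,\infty}_\theta$ estimate with $L^4_{t,x}$, so you inherit the loss $(\log L)^{1/6}$. It has to be proved directly as a non-endpoint lateral Strichartz estimate, as the paper does: lateral $TT^*$ using the $|x_1|^{-1}$ decay, then Hardy--Littlewood--Sobolev in $x_1$, which closes precisely because $p=3>2$.
\item \emph{The bound \eqref{eq:free-max-en}.} You are right that Bernstein cannot give $L^4_xL^\infty_t$; the paper's last step only yields $L^\infty_tL^4_x$. But your replacement fails. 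Interpolating $L^2_{x_1}L^\infty_{x_2,t}$ with $L^\infty_{t,x}$ produces $L^4_{x_1}L^\infty_{x_2,t}$. That norm does not control $L^4_xL^\infty_t$, since the latter integrates in $x_2$ rather than taking a supremum. It also relies on the same unboosted endpoint.
\end{itemize}
For \eqref{eq:free-max-en}, the $\langle x\rangle^{-1}$ decay is exactly enough if you use it in all of $\R^2$. By $TT^*$ it suffices to bound $\bigl\|\int_{\R^2}\sup_\tau|K(\tau,\cdot-y)|\,\|F(\cdot,y)\|_{L^1_t}\,dy\bigr\|_{L^4_x}$ by $\|F\|_{L^{4/3}_xL^1_t}$. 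Since $\sup_\tau|K(\tau,x)|\lesssim\langle x\rangle^{-1}$, this is the Riesz potential bound $I_1\colon L^{4/3}(\R^2)\to L^4(\R^2)$. Rescaling then gives the factor $2^{-k/2}$.
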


\begin{proof}
The proof is divided into four steps. Since time translation, rotation, and scaling preserve the form of these mixed norms, it is enough to prove the scalar case with $t_0=0$ and $k=0$; the vector-valued case follows by summing over components.

First, unitarity directly gives
\[
\norm{S_0 f}_{L_t^\infty L_x^2}\le \norm{f}_{L_x^2}.
\]
Let $Tf(t,x):=1_I(t)e^{\I t\Delta}P_0f(x)$. By the $TT^*$ principle, it suffices to prove
\[
\norm{TT^*F}_{L_{t,x}^4}\lesssim \norm{F}_{L_{t,x}^{4/3}}.
\]
Since
\[
(TT^*F)(t)=\int_I e^{\I(t-s)\Delta}P_0^2F(s)\,ds,
\]
and the kernel of $e^{\I\tau\Delta}P_0^2$ satisfies the stationary phase bound
\[
|K(\tau,x)|\lesssim_N (1+|\tau|)^{-1}\bigl(1+|x|/(1+|\tau|)\bigr)^{-N},
\]
we have
\[
\norm{e^{\I\tau\Delta}P_0^2 g}_{L_x^4}
\lesssim (1+|\tau|)^{-1/2}\norm{g}_{L_x^{4/3}}.
\]
Thus
\[
\norm{TT^*F(t)}_{L_x^4}
\lesssim \int_I (1+|t-s|)^{-1/2}\norm{F(s)}_{L_x^{4/3}}\,ds.
\]
The one-dimensional Hardy--Littlewood--Sobolev inequality in time gives
\[
\norm{TT^*F}_{L_t^4L_x^4}
\lesssim \norm{F}_{L_t^{4/3}L_x^{4/3}},
\]
which proves \eqref{eq:freeL4-en}.

Second, we prove the local-smoothing endpoint. Fix $|j|\le20$ and $\theta=e_1$, and set
\[
\nu(t,x):=P_{j,e_1}e^{\I t\Delta}P_0f.
\]
Then $\widehat\nu$ is supported where $|\xi|\sim1$ and $|\xi_1|\sim1$. For fixed $x_1$, taking the Fourier transform in $(x_2,t)$ gives
\[
\mathcal F_{x_2,t}\nu(x_1,\eta,\tau)
=\int e^{\I x_1\xi_1}m_j(\xi_1,\eta)\widehat f(\xi_1,\eta)
\,\delta(\tau-\xi_1^2-\eta^2)\,d\xi_1.
\]
Since $|\xi_1|\sim1$, the coarea formula and Cauchy--Schwarz imply
\[
\int_{\R^2}|\mathcal F_{x_2,t}\nu(x_1,\eta,\tau)|^2\,d\eta\,d\tau
\lesssim \norm{f}_{L_x^2}^2.
\]
By Plancherel in $(x_2,t)$,
\[
\norm{P_{j,e_1}e^{\I t\Delta}P_0f}_{L_{e_1}^{\infty,2}}
\lesssim \norm{f}_{L_x^2}.
\]
Scaling gives
\[
\norm{P_{j,\theta}S_kf}_{L_{\theta,0}^{\infty,2}}
\lesssim 2^{-k/2}\norm{f}_{L_x^2},
\qquad |j-k|\le20.
\]
The Galilean commutation relation with the free Schrödinger group then yields
\[
\norm{P_{j,\theta}S_kf}_{L_{\theta,\lambda}^{\infty,2}}
\lesssim 2^{-k/2}\norm{f}_{L_x^2},
\qquad |j-k|\le20,\\ |\lambda|<2^{k-40}.
\]
This is the local-smoothing part of \eqref{eq:freesmooth-en}.

Third, we prove the lateral Strichartz estimates and the two-dimensional Galilean maximal-function endpoint. Again take $k=0$ and $\theta=e_1$. For the Galilean endpoint, let $I\subset[0,2^{2K}]$ and set $\delta:=2^{-2K-40}$. Decompose the unit annulus in the variable $2\xi_1$ as
\[
1=\sum_{\lambda\in W_{40}}\chi_\lambda(2\xi_1),
\qquad
\operatorname{supp}\chi_\lambda\subset\{|2\xi_1-\lambda|\le4\delta\},
\]
with uniformly bounded overlap. Let $f_\lambda$ be defined by
\[
\widehat f_\lambda(\xi)=\chi_\lambda(2\xi_1)\widehat{P_0f}(\xi),
\qquad
\nu_\lambda(t,x):=1_I(t)e^{\I t\Delta}f_\lambda(x).
\]
It suffices to prove
\begin{equation}\label{eq:gal-piece-en}
\norm{\nu_\lambda}_{L_{e_1,\lambda}^{2,\infty}}
\lesssim \delta^{1/2}\norm{f_\lambda}_{L_x^2}.
\end{equation}
Indeed, summing over $\lambda$ and using Cauchy--Schwarz, the effective number $O(\delta^{-1})$ of strips, and almost orthogonality yields
\[
\norm{S_0f}_{L_{e_1,W_{40}}^{2,\infty}}
\lesssim \norm{f}_{L_x^2}.
\]
Scaling gives
\begin{equation}\label{eq:gal-max-proof-en}
2^{-k/2}\sup_{\theta\in\mathbb S^1}
\norm{S_kf}_{L_{\theta,W_{k+40}}^{2,\infty}}
\lesssim \norm{f}_{L_x^2}.
\end{equation}

To prove \eqref{eq:gal-piece-en}, apply the Galilean transform to $\nu_\lambda$ and ignore unimodular phases. One obtains
\[
T_{\lambda e_1}\nu_\lambda(t,x)
=1_I(t)\int e^{\I(x_1a/2+x_2\eta+t(\eta^2+a^2/4))}
 b_\lambda(a,\eta)\,da\,d\eta,
\]
where $|a|\lesssim\delta$, $|\eta|\lesssim1$, and $\norm{b_\lambda}_{L^2_{a,\eta}}\sim\norm{f_\lambda}_{L_x^2}$. For fixed $x_1$, the map
\[
(a,\eta)\mapsto(\tau,\eta):=(\eta^2+a^2/4,\eta)
\]
has fiber length $O(\delta)$ when $|a|\lesssim\delta$. The coarea formula and Cauchy--Schwarz therefore give
\[
\norm{T_{\lambda e_1}\nu_\lambda(x_1,\cdot)}_{H^2_{t,x_2}}
\lesssim
\delta^{1/2}
\Bigl(\int |\mathcal F^{-1}_{a\to x_1}b_\lambda(x_1,\eta)|^2\,d\eta\Bigr)^{1/2}.
\]
The embedding $H^2_{t,x_2}\hookrightarrow L^\infty_{t,x_2}$, followed by $L^2$ in $x_1$ and Plancherel, proves \eqref{eq:gal-piece-en}.

For the lateral Strichartz estimates, first assume that the Fourier support lies in a fixed angular sector with $|\xi_1|\sim1$. Let
\[
Ef(t,x_1,x_2):=e^{\I t\Delta}P_0f(x_1,x_2).
\]
For fixed $x_1$,
\[
\mathcal F_{x_2,t}Ef(x_1,\eta,\tau)
=\int e^{\I x_1\xi_1}m(\xi_1,\eta)\widehat f(\xi_1,\eta)
\delta(\tau-\xi_1^2-\eta^2)\,d\xi_1.
\]
Since $|\xi_1|\sim1$, the change of variables $(\xi_1,\eta)\mapsto(\tau,\eta)$ has Jacobian $2|\xi_1|\sim1$. This nondegeneracy, together with Hausdorff--Young, the coarea formula, and $TT^*$, gives
\[
\norm{Ef}_{L_{e_1}^{p,q}}
\lesssim
\norm{f}_{L_x^2},
\qquad
2<p\le\infty,
\quad 2\le q<\infty,
\quad \frac1p+\frac1q=\frac12.
\]
After a finite angular decomposition and rotation,
\[
\norm{S_0 f}_{L_\theta^{3,6}}
+
\sup_{|j|\le20}\norm{P_{j,\theta}S_0f}_{L_\theta^{6,3}}
\lesssim \norm{f}_{L_x^2}.
\]
Scaling gives
\begin{equation}\label{eq:lateral-proof-en}
2^{-k/6}\sup_{\theta\in\mathbb S^1}\norm{S_k f}_{L_\theta^{3,6}}
+
2^{k/6}\sup_{|j-k|\le20}\sup_{\theta\in\mathbb S^1}\norm{P_{j,\theta}S_kf}_{L_\theta^{6,3}}
\lesssim \norm{f}_{L_x^2}.
\end{equation}
Equations \eqref{eq:gal-max-proof-en} and \eqref{eq:lateral-proof-en} imply \eqref{eq:freelateral-en} and the remaining part of \eqref{eq:freesmooth-en}.

Finally, Bernstein gives
\[
\norm{P_k h}_{L_x^4}\lesssim 2^{k/2}\norm{P_k h}_{L_x^2}.
\]
Taking $h=e^{\I(t-t_0)\Delta}P_kf$ and then the supremum in $t$ proves \eqref{eq:free-max-en}.
\end{proof}

\begin{proposition}[Linear estimate]\label{prop:linear-en}
Let $I=[t_0,t_1]\subset (0,2^{2K}]$, let $k\in\mathbb Z$, and suppose $U_0\in L_x^2(\R^2;\C^3)$ is spatially frequency-supported in $\Xi_k$. If $V$ satisfies
\[
(\I\partial_t+\Delta)V=H,
\qquad
V(t_0)=U_0,
\]
then
\begin{equation}\label{eq:linear-en}
\norm{V}_{X_k(I)}\lesssim \norm{U_0}_{L_x^2}+\norm{H}_{Y_k(I)}.
\end{equation}
\end{proposition}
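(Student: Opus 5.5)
By Duhamel's formula we write
\[
V(t)=e^{\I(t-t_0)\Delta}U_0-\I\int_{t_0}^t e^{\I(t-s)\Delta}H(s)\,ds .
\]
The homogeneous part is handled by Lemma~\ref{lem:free-linear-en}. Since $U_0$ is frequency-supported in $\Xi_k$, we may insert a fattened projection $\widetilde P_k$ and estimate each component of the $X_k(I)$ norm with \eqref{eq:freeL4-en}--\eqref{eq:free-max-en}. This gives $\norm{e^{\I(t-t_0)\Delta}U_0}_{X_k(I)}\lesssim\norm{U_0}_{L^2_x}$. Here the restriction $I\subset(0,2^{2K}]$ is exactly what the Galilean maximal piece needs, since it was proved for $I\subset[0,2^{2K}]$ after scaling. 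The Duhamel term is linear in $H$. We therefore treat separately each atom $f_1,\dots,f_4$ of a near-optimal decomposition of $H$ in $Y_k(I)$, and the goal is $\norm{\int_{t_0}^te^{\I(t-s)\Delta}f_i(s)\,ds}_{X_k(I)}\lesssim\norm{f_i}_{\mathrm{atom}}$ for each $i$.

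The plan for each atom has two stages. First we prove the untruncated dual estimate: the map $f\mapsto\int_I e^{-\I s\Delta}f(s)\,ds$ is bounded into $L^2_x$ by the corresponding atomic norm. This is the adjoint of the homogeneous estimates. For $f_1$ it is the dual of the $L^4_{t,x}$ Strichartz bound. For $f_2,f_3$ it is the dual of the lateral $L^{3,6}_\theta$ bound in \eqref{eq:freelateral-en}; the dual exponents $(3/2,6/5)$ and the weight $2^{k/6}$ are matched to $2^{-k/6}$. For $f_4$ it is the dual of the Galilean maximal bound $L^{2,\infty}_{\theta,W_{k+40}}$, whose dual space is $L^{1,2}_{\theta,W_{k-40}}$ with weight $2^{-k/2}$. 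Composing with the free evolution and applying Lemma~\ref{lem:free-linear-en} again, the untruncated operator $\int_I e^{\I(t-s)\Delta}f(s)\,ds$ is bounded from each atomic space into $X_k(I)$.

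The second stage removes the truncation $s<t$. We use the Christ--Kiselev lemma \cite{ChristKiselev} whenever the time integrability exponent of the output space strictly exceeds that of the input. This covers $L^4_{t,x}$ and $L^\infty_tL^2_x$ from $L^{4/3}$. In the lateral norms time is integrated inside the $x_1$-integral, so the natural ordering variable is the lateral coordinate $x_1=x\cdot\theta$ and not $t$. We therefore use Christ--Kiselev in $x_1$ wherever the outer exponents satisfy $p_{\mathrm{in}}<p_{\mathrm{out}}$. That holds for the pairs $3/2\to 3,6,\infty$ and $1\to2,3,6,\infty$ except at the endpoints.

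The main obstacle is the endpoint pairings where Christ--Kiselev fails. These are the mapping of $f_4$ (outer $L^1$) against the local-smoothing norm $L^{\infty,2}_{\theta,\lambda}$ and against $L^{2,\infty}$, and the mapping of $f_1$ into the maximal part. For these we do not truncate in $t$. Instead we use the sectorial projection $P_{j,\theta}$ and the Galilean frame. In the frame $T_{\lambda\theta}$ with $|\lambda|<2^{k-40}$ and $\mu\in W_{k-40}$ far from $\lambda$, the frequency localization $|\xi\cdot\theta|\sim2^k$ makes the solution propagate with nonzero velocity in the direction $\theta$. The retarded Duhamel integral can then be written, modulo a rapidly decaying error, as an integral over a half-line in $x_1$, using the explicit one-dimensional resolvent $(\partial_{x_1}^2+\tau-\eta^2)^{-1}$ on the Fourier side in $(t,x_2)$. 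This yields the Kenig--Ponce--Vega inhomogeneous local smoothing bound $\norm{\cdot}_{L^{\infty,2}}\lesssim 2^{-k}\norm{\cdot}_{L^{1,2}}$. It is proved directly by Plancherel in $(t,x_2)$ together with the nonvanishing Jacobian $|\xi_1|\sim2^k$, in the same way as the second step of Lemma~\ref{lem:free-linear-en}.

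The remaining endpoint, $f_4$ into the Galilean maximal norm, should follow by interpolating this bound with the energy and $TT^*$ bounds. Failing that, we would decompose into the strips $\chi_\lambda$ and repeat the coarea argument of \eqref{eq:gal-piece-en} on the Duhamel integral. The cutoff $1_I$ and the finiteness of $I$ introduce only the factor $2^{2K}$, which is absorbed by the choice of $\delta$. We expect this endpoint to be the step requiring the most care.
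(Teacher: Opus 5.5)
Your skeleton is the paper's: Duhamel, Lemma~\ref{lem:free-linear-en} for the free part, and atom-by-atom duality for the Duhamel term (the paper simply asserts that a ``cross $TT^*$'' argument handles each atom). You are right that this hides the retarded truncation, but the details you add contain two errors and leave the decisive step open.

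\emph{The $f_4$ duality is wrong.} The dual of $L^{2,\infty}_\theta$ is $L^{2,1}_\theta$, not $L^{1,2}_\theta$. A bound dual to the maximal component, which has weight $2^{-k/2}$ in $X_k$, would carry weight $2^{k/2}$. The atom $2^{-k/2}\norm{f_4}_{L^{1,2}_{\theta,W_{k-40}}}$ is instead dual to the local-smoothing component $2^{k/2}\norm{P_{j,\theta}\,\cdot\,}_{L^{\infty,2}_{\theta,\lambda}}$. To use this, split into angular sectors, use the $\sup_\theta$ in $Y_k$ to pick $\theta$ adapted to each sector, and note that $T_{\mu\theta}$ preserves the $L^2_{t,x}$ pairing. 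So the untruncated bound is true, but for this reason.

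\emph{Christ--Kiselev in $x_1$ orders the wrong variable.} Cutting the untruncated-in-time operator at $y\cdot\theta<x\cdot\theta$ gives, for sector-localized forcing and up to a nonresonant term, the outgoing-resolvent solution, not the retarded Duhamel term. The two differ by a free wave that must be estimated separately. The cut also says nothing about components of $X_k$ in other directions $\theta'\neq\theta$. You do not need it. The time-ordered lemma only requires the input norm to be $\ell^p$-superadditive and the output norm $\ell^q$-subadditive over time-disjoint pieces, with $p<q$. Mixed norms with $t$ inside qualify:
\begin{itemize}
\item inputs: $p=6/5$ for $L^{3/2,6/5}$, $p=2$ for $L^{1,2}$;
\item outputs: $q=4$ for $L^4_xL^\infty_t$; $q=3$ for $L^{3,6}$ and $L^{6,3}$; $q=2$ for $L^{\infty,2}$ and, via the strips of \eqref{eq:gal-piece-en}, for the Galilean maximal norm.
\end{itemize}
So $f_1$ into the maximal norm is not an endpoint. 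The only genuine endpoints are $f_4$ against $L^{\infty,2}_{\theta,\lambda}$ and against $L^{2,\infty}_{\theta,W_{k+40}}$.

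\emph{The main gap} is the step you flag yourself: $f_4$ into $2^{-k/2}\sup_{\theta'}\norm{\cdot}_{L^{2,\infty}_{\theta',W_{k+40}}}$ is not proved, and neither suggested route works.
\begin{itemize}
\item Interpolation cannot give it. The target is $L^\infty$ simultaneously in $t$ and in the spatial direction orthogonal to $\theta'$. The only bounds you have for this term that are $L^\infty$ in $t$ are $L^\infty_tL^2_x$ and, by Christ--Kiselev, $L^4_xL^\infty_t$; both have spatial exponent $2$ or $4$. Nor does the Galilean sum-space structure of the target arise from interpolating single-frame bounds.
\item Re-running the coarea argument of \eqref{eq:gal-piece-en} fails. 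That argument uses that a free wave has space-time Fourier support on the paraboloid, which the truncated Duhamel integral lacks.
\end{itemize}

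The missing idea is already in your half-line resolvent formula, once it is not read as ``retarded integral modulo a rapidly decaying error''. Take $f$ with $\widehat f$ in a one-sided sector $\xi\cdot\theta\sim2^k$, in the frame $T_{\mu\theta}$. The formula writes a solution of the forced equation as $\int_{\R}\mathbf{1}_{\{x\cdot\theta>a\}}e^{\I t\Delta}g_a\,da$ plus a nonresonant term. Here $\norm{g_a}_{L^2_x}\lesssim2^{-k/2}\norm{f(a,\cdot)}_{L^2_{t,x_2}}$, from the Jacobian $|\xi_1|\sim2^k$. This solution differs from the Duhamel term by a free wave whose data are controlled by the $L^\infty_tL^2_x$ bound.

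Multiplication by a (possibly moving) half-plane indicator is pointwise contractive in every Galilean frame. So Minkowski in $a$ together with Lemma~\ref{lem:free-linear-en} bounds every lattice component of $X_k$, the Galilean maximal one included, by exactly the quantity $2^{-k/2}\norm{f}_{L^{1,2}_{\theta,\mu}}$ that $Y_k$ provides. For components with $P_{j,\theta'}$ inside, apply the argument to the projected equation: $P_{j,\theta'}$ commutes with $\I\partial_t+\Delta$, and the $\sup_\theta$ in $Y_k$ lets you work in direction $\theta'$. Without an argument of this kind, the proposal does not establish \eqref{eq:linear-en}.
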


\begin{proof}
This follows from Lemma \ref{lem:free-linear-en} by the standard cross $TT^*$ argument and the atomic definition of $Y_k$. In more detail, write Duhamel's formula
\[
V(t)=e^{\I(t-t_0)\Delta}U_0-
\I\int_{t_0}^t e^{\I(t-s)\Delta}H(s)\,ds.
\]
The homogeneous part is bounded in $X_k(I)$ by Lemma \ref{lem:free-linear-en}. For the inhomogeneous part, decompose $H=H_1+H_2+H_3+H_4$ according to the four atoms in the definition of $Y_k$. Applying the free estimates with $TT^*$ gives the contributions
\[
\norm{H_1}_{L_{t,x}^{4/3}},
\qquad
2^{k/6}\norm{H_2}_{L_{\hat\theta_1}^{3/2,6/5}},
\qquad
2^{k/6}\norm{H_3}_{L_{\hat\theta_2}^{3/2,6/5}},
\]
and
\[
2^{-k/2}\sup_{\theta\in\mathbb S^1}\norm{H_4}_{L_{\theta,W_{k-40}}^{1,2}}.
\]
Taking the infimum over all such decompositions yields \eqref{eq:linear-en}.
\end{proof}

\begin{lemma}[Product estimate]\label{lem:product-en}
For $h\in L^2_{t,x}$ and $f\in X_j(I)$, one has
\begin{equation}\label{eq:product-en}
\norm{P_k(hf)}_{Y_k(I)}
\lesssim
\begin{cases}
\norm{h}_{L^2_{t,x}}\norm{f}_{X_j(I)}, & |j-k|\le 80,\\[0.3em]
2^{-\frac12|j-k|}\norm{h}_{L^2_{t,x}}\norm{f}_{X_j(I)}, & j\le k-80,\\[0.3em]
2^{-\frac16|j-k|}\norm{h}_{L^2_{t,x}}\norm{f}_{X_j(I)}, & k\le j-80.
\end{cases}
\end{equation}
\end{lemma}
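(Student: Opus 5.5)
We treat the three frequency regimes separately. In each one we place $P_k(hf)$ into a single atom of $Y_k(I)$ and pair the $L^2_{t,x}$ norm of $h$ with a suitable component of $\norm{f}_{X_j(I)}$ using Hölder's inequality in the mixed norms. Since $h$ carries no frequency localization, every gain must come from $f$: from its Fourier support in $\Xi_j$, and from the sectorial and Galilean components built into $X_j$. Throughout we use that $P_k$ is bounded on every mixed norm $L^{p,q}_\theta$, and on $L^{p,q}_{\theta,\lambda}$ up to harmless modulation. This holds because $P_k$ is convolution with an $L^1$ kernel, and the Galilean transform intertwines $P_k$ with a translated multiplier whose kernel is still uniformly $L^1$.

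\emph{Comparable frequencies, $|j-k|\le 80$.} We use the $L^{4/3}_{t,x}$ atom. Hölder gives
\[
\norm{P_k(hf)}_{L^{4/3}_{t,x}}\lesssim\norm{h}_{L^2_{t,x}}\norm{f}_{L^4_{t,x}}\le\norm{h}_{L^2_{t,x}}\norm{f}_{X_j(I)}.
\]
If this turns out to be insufficient, there is a second route. Decompose $f=\sum_\theta P_{j,\theta}f$ into finitely many sectors. For each sector, Hölder in $L^{3/2,6/5}_\theta$ against $L^{2,2}\times L^{6,3}_\theta$ gives $2^{k/6}\norm{h}_{L^2}\norm{P_{j,\theta}f}_{L^{6,3}_\theta}\lesssim\norm{h}_{L^2}\norm{f}_{X_j}$. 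The two standard directions $\hat\theta_1,\hat\theta_2$ suffice after a finite rotation argument.

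\emph{Low-frequency $f$, $j\le k-80$.} Here the output frequency $2^k$ is far above $2^j$, so the frequency of $h$ must be $\sim 2^k$. We may therefore replace $h$ by $\tilde P_kh$, which does not increase its $L^2$ norm. We use the $L^{1,2}_{\theta,W_{k-40}}$ atom, with Hölder $L^1_{x_1}L^2_{t,x_2}\le L^2_{x_1}L^2\cdot L^2_{x_1}L^\infty$:
\[
2^{-k/2}\norm{P_k(hf)}_{L^{1,2}_{\theta,\lambda}}\lesssim 2^{-k/2}\norm{h}_{L^2}\norm{f}_{L^{2,\infty}_{\theta,\lambda}},
\]
and $\norm{f}_{L^{2,\infty}_{\theta,W_{j+40}}}\lesssim 2^{j/2}\norm{f}_{X_j}$. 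Since the Galilean transform only modulates $h$ by a unimodular phase, the $L^2$ norm of $h$ is unchanged. The result is the factor $2^{(j-k)/2}$. The technical point is matching the lattices: we need $W_{k-40}$ against $W_{j+40}$, together with the interpretation of $\sup_\theta\norm{\cdot}_{L^{1,2}_{\theta,W}}$ as an atomic sum over $\lambda\in W$. We would split $f$ into pieces adapted to the coarser lattice and use the finite overlap built into the definition of $W_k$.

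\emph{High-frequency $f$, $k\le j-80$.} Here the $1/6$ weights enter. We use the lateral atom in the direction $\hat\theta_i$, with Hölder $L^{3/2,6/5}\le L^{2,2}\cdot L^{6,3}$:
\[
2^{k/6}\norm{P_k(hP_{j,\hat\theta_i}f)}_{L^{3/2,6/5}_{\hat\theta_i}}\lesssim 2^{k/6}\norm{h}_{L^2}\norm{P_{j,\hat\theta_i}f}_{L^{6,3}_{\hat\theta_i}}\lesssim 2^{(k-j)/6}\norm{h}_{L^2}\norm{f}_{X_j}.
\]
To use this, we first decompose $f=\sum_{i=1,2}P_{j,\hat\theta_i}f$ using two sectors around the coordinate axes, which together cover the annulus. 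This yields the factor $2^{-|j-k|/6}$.

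\emph{Main obstacle.} The hardest step is the Galilean case: justifying Hölder inside the $L^{p,q}_{\theta,\lambda}$ norms uniformly over the lattice $W_{k-40}$, and compatibly with $W_{j+40}$. We would handle it by first reducing to $\tilde P_kh$ and then absorbing the Galilean phase into $h$.
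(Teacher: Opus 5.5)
Your proof is correct. For $|j-k|\le 80$ and for $j\le k-80$ it is the paper's argument: the $L^{4/3}_{t,x}$ atom with H\"older $L^2\cdot L^4$ in the first case, and in the second the Galilean $L^{1,2}_{\theta,W_{k-40}}$ atom paired with the $L^{2,\infty}_{\theta,W_{j+40}}$ component of $X_j$.

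The lattice matching you flag as the main obstacle is just the inclusion $W_{j+40}\subset W_{k-40}$, which holds because $k-j\ge 80$. Given $f=\sum_{\lambda\in W_{j+40}}f_\lambda$, one has $T_{\lambda\theta}(hf_\lambda)(t,x)=h(t,x+t\lambda\theta)\,T_{\lambda\theta}f_\lambda(t,x)$. This is a measure-preserving shear of $h$, not a modulation, so it still preserves $L^2_{t,x}$. H\"older then applies piece by piece and no re-splitting of $f$ is needed.

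For $k\le j-80$ you genuinely depart from the paper, and your route is the one that actually closes. The paper pairs $h$ with the $L^{3,6}_{\hat\theta_1}$ component of $X_j$. But H\"older against $L^2$ sends $L^{3,6}$ into $L^{6/5,3/2}$, not $L^{3/2,6/5}$. The weights also fail: $2^{k/6}\norm{f}_{L^{3,6}_{\hat\theta_1}}\le 2^{(k+j)/6}\norm{f}_{X_j}$, not the claimed $2^{-(j-k)/6}\norm{f}_{X_j}$.

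Your pairing with the sectorial $L^{6,3}_{\hat\theta_i}$ components has matching exponents ($\tfrac12+\tfrac16=\tfrac23$ and $\tfrac12+\tfrac13=\tfrac56$) and the correct weight $2^{k/6}\cdot 2^{-j/6}$. The cost is a decomposition of $f$ into pieces $P_{j',\hat\theta_i}f$, with $i=1,2$ and $|j'-j|\le 2$, that sum back to $f$ on $\Xi_j$. This is needed because $Y_k$ has lateral atoms only along the two coordinate directions. It is available if the fixed sectorial cutoffs are chosen as a partition of unity, which is possible since every $\xi$ satisfies $\max(|\xi_1|,|\xi_2|)\ge|\xi|/\sqrt2$. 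The $\sup_{|j'-j|\le 20}$ in the $X_j$ norm then controls each piece.

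For the same reason, the ``finite rotation argument'' in your backup route for comparable frequencies is not available, since the $Y_k$ atoms are pinned to $\hat\theta_1,\hat\theta_2$. That route is unnecessary anyway, because the $L^{4/3}$ atom already closes that case.
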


The estimate \eqref{eq:product-en} is used repeatedly below to place products containing an $L^2_{t,x}$ factor into the forcing space $Y_k$.

\begin{proof}
If $|j-k|\le 80$, directly use the first atom of $Y_k$:
\[
\norm{P_k(hf)}_{Y_k}
\le
\norm{P_k(hf)}_{L_{t,x}^{4/3}}
\le
\norm{h}_{L_{t,x}^2}\norm{f}_{L_{t,x}^4}
\le
\norm{h}_{L_{t,x}^2}\norm{f}_{X_j}.
\]

If $j\le k-80$, use the fourth atom. For arbitrary $\theta$ and $\lambda$, mixed-norm H\"older gives
\[
\norm{P_k(hf)}_{L_{\theta,\lambda}^{1,2}}
\le
\norm{h}_{L_{t,x}^2}\norm{f}_{L_{\theta,\lambda}^{2,\infty}}.
\]
Taking the $W_{k-40}$ sum-space norm and using $W_{j+40}\subset W_{k-40}$,
\begin{align*}
\norm{P_k(hf)}_{Y_k}
&\le
2^{-k/2}\sup_{\theta}\norm{P_k(hf)}_{L_{\theta,W_{k-40}}^{1,2}}\\
&\le
2^{-(k-j)/2}\norm{h}_{L_{t,x}^2}
\Bigl(2^{-j/2}\sup_{\theta}\norm{f}_{L_{\theta,W_{j+40}}^{2,\infty}}\Bigr)\\
&\le
2^{-\frac12(k-j)}\norm{h}_{L_{t,x}^2}\norm{f}_{X_j}.
\end{align*}

If $k\le j-80$, use the second atom and mixed-norm H\"older:
\[
\norm{P_k(hf)}_{L_{\hat\theta_1}^{3/2,6/5}}
\le
\norm{h}_{L_{t,x}^2}\norm{f}_{L_{\hat\theta_1}^{3,6}}.
\]
Therefore
\begin{align*}
\norm{P_k(hf)}_{Y_k}
&\le
2^{k/6}\norm{P_k(hf)}_{L_{\hat\theta_1}^{3/2,6/5}}\\
&\le
2^{-(j-k)/6}\norm{h}_{L_{t,x}^2}
\Bigl(2^{-j/6}\sup_{\theta}\norm{f}_{L_\theta^{3,6}}\Bigr)\\
&\le
2^{-\frac16(j-k)}\norm{h}_{L_{t,x}^2}\norm{f}_{X_j}.
\end{align*}
This proves all three cases.
\end{proof}

For $\sigma\ge0$ and a compact interval $I=[t_0,t_1]$, define the direct frequency envelopes
\begin{align}
b_k(\sigma)
&:= \sup_{j\in\mathbb Z}2^{\sigma j}2^{-\delta|k-j|}\norm{P_jW_x}_{X_j(I)},\label{eq:bk-en}\\
\alpha_k(\sigma)
&:= \sup_{j\in\mathbb Z}2^{\sigma j}2^{-\delta|k-j|}\norm{P_jW_x}_{L_{t,x}^4(I\times\R^2)},\label{eq:ak-en}\\
\beta_k(\sigma)
&:= \sup_{j\in\mathbb Z}2^{\sigma j}2^{-\delta|k-j|}\norm{P_jW_x(t_0)}_{L_x^2}.\label{eq:betak-en}
\end{align}
We write
\[
b_k:=b_k(0),
\quad
\alpha_k:=\alpha_k(0),
\quad
\beta_k:=\beta_k(0),
\quad
\upsilon_k(\sigma):=\alpha_k(\sigma)+\beta_k(\sigma).
\]
Then
\[
\sum_k\alpha_k^2\sim \norm{W_x}_{\ell^2L_{t,x}^4(I\times\R^2)}^2,
\qquad
\sum_k\beta_k^2\sim \norm{W_x(t_0)}_{L_x^2}^2\sim E(u(t_0)).
\]

\subsection{Envelopes and continuation}

We now record the frequency-envelope estimate to be closed later. This proposition is the common input for Theorems \ref{thm:globalize-en} and \ref{thm:window-global-en}.

\begin{proposition}[Frequency envelope]\label{prop:main-freq-en}
Fix $E_0<4\pi$ and $\sigma_1\in\mathbb Z_{\ge0}$. There exist $\delta,\eps_0>0$, depending only on $E_0$ and $\sigma_1$, such that the following holds. If
\[
u:I\times\R^2\to\Sph^2
\]
is a smooth solution of \eqref{eq:SM-en}, satisfying
\[
u(t_0)-Q\in L_x^2,
\qquad
u(t_0)\in \dotH^1\cap\dotH^3,
\qquad
E(u)\le E_0,
\]
and
\begin{equation}\label{eq:small-W-en}
\norm{W_x}_{\ell^2L_{t,x}^4(I\times\R^2)}\le \eps_0,
\end{equation}
then for all $0\le\sigma\le\sigma_1$,
\begin{equation}\label{eq:techconcl-en}
b_k(\sigma)\lesssim \upsilon_k(\sigma).
\end{equation}
\end{proposition}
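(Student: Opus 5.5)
We argue by a continuity (bootstrap) argument in the length of the interval. For $I'=[t_0,t']\subset I$, the envelopes $b_k(\sigma)$ computed on $I'$ are finite (since $u\in\dotH^1\cap\dotH^3$ and $\sigma\le\sigma_1$, after interpolation) and depend continuously on $t'$. As $t'\downarrow t_0$ they reduce to $\beta_k(\sigma)$. The bootstrap hypothesis is
\[
b_k(\sigma)\le C_1\upsilon_k(\sigma)\qquad\text{on } I',
\]
and the goal is to improve it to $b_k(\sigma)\le \tfrac{C_1}{2}\upsilon_k(\sigma)$ once $\eps_0$ is small. Since the envelopes are $\delta$-slowly varying, the high/low frequency gains in Lemma \ref{lem:product-en}, namely $2^{-|j-k|/2}$ and $2^{-|j-k|/6}$, can be summed against $2^{\delta|j-k|}$ as long as $\delta<1/12$.

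\textbf{Step 1: apply the linear estimate to \eqref{eq:ambient-W-en}.} Apply $P_k$ and use Proposition \ref{prop:linear-en}, after rescaling $I$ into $(0,2^{2K}]$:
\[
\norm{P_kW_x}_{X_k}\lesssim \norm{P_kW_x(t_0)}_{L^2}+\norm{P_k(2\Gamma_j\partial_jW_m)}_{Y_k}+\norm{P_k(\Gamma_j^2W_m)}_{Y_k}+\norm{P_k\mathcal Q_m}_{Y_k}.
\]

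\textbf{Step 2: the cubic terms.} For $\Gamma_j^2W_m$ and $\Im\hpair{W_m}{W_j}W_j$, dyadically decompose the three factors. Pair two of them into an $L^2_{t,x}$ factor $h$, controlled by $L^4\cdot L^4$ through $\alpha$, and put the third in $X_j$. Lemma \ref{lem:product-en} then gives a bound of the form $\eps_0\sum_j 2^{-c|j-k|}(\alpha_j+b_j)\alpha_j$, which is $\lesssim \eps_0 C_1\upsilon_k$. To keep $\Gamma$ in a dyadic $\ell^2L^4$ form we use Lemma \ref{lem:multiplier-en} and the bound $|\Gamma|\lesssim|W|$, since $\Gamma$ is linear in $\partial u$ with bounded coefficient $u\in\dotH^1\cap L^\infty$. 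The weighted version $\sigma>0$ is handled by putting $2^{\sigma k}$ on the highest frequency factor.

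\textbf{Step 3: the transport term $\Gamma_j\partial_jW_m$, which is the main obstacle.} Write $\Gamma_j$ at frequency $2^{j_1}$ and $W$ at frequency $2^{j_2}$.

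\emph{High–low and high–high pieces} ($j_1\ge j_2-80$). Here $\partial_j$ costs at most $2^{j_2}\le 2^{j_1+80}$. Expressing $P_{j_1}\Gamma\sim 2^{-j_1}\nabla P_{j_1}\Gamma$ and using the $L^2$ bilinear pairing of two $L^4$ terms, these pieces go through Lemma \ref{lem:product-en} with gains, giving an $O(\eps_0)$ contribution.

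\emph{Low–high piece} ($j_1<j_2-80$). This is genuinely nonperturbative, because a derivative falls on the high frequency. We plan to exploit the heat-flow representation
\[
\Gamma_j(0)=-\int_0^\infty \partial_s\Gamma_j(s)\,ds,
\]
which expresses the low-frequency connection as a quadratic expression $\int_0^\infty W_x(s)\,\nabla W_x(s)\,ds$ in the heat-flow slices. We then use three ingredients:
\begin{itemize}
\item Section 3 heat-flow decay, $\norm{P_kW_x(s)}_{X}\lesssim(1+s2^{2k})^{-4}\upsilon_k$, obtained from \eqref{eq:ambient-heat-en} by the same envelope bootstrap;
\item the two-body bilinear estimate, which controls $\norm{P_{j_1}(W(s)\overline{W(s)})\cdot P_{j_2}W}_{L^2}$ with a gain $2^{-(j_2-j_1)/2}$ via local smoothing and the maximal function components of $X_k$;
\item the fourth atom of $Y_k$, namely the Galilean $L^{1,2}$ norm paired against $L^{2,\infty}$.
\end{itemize}
Together these produce a bound $\lesssim \eps_0 C_1^2\upsilon_k$ from the $L^4$ factor inside $\Gamma$.

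The subthreshold energy $E_0<4\pi$ enters through the global convergence of the heat flow to a constant, which makes the $s$-integral converge uniformly and fixes $\delta$ and the constants as functions of $E_0$. Choosing $C_1$ large and then $\eps_0$ small with $\eps_0C_1^2\ll1$ closes the bootstrap.
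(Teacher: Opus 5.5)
\textbf{There is a genuine gap in Step 3, at the low--high piece.} The whole bootstrap hinges on this term. You claim it is $O(\eps_0C_1^2\upsilon_k)$, on the grounds that the representation $\Gamma_j(0)=-\int_0^\infty\partial_s\Gamma_j\,ds$ makes $\Gamma$ quadratic in $W$ with an $L^4$-small factor. That is the caloric-gauge picture, where the connection is quadratic because the frame is parallel in $s$. It fails for the ambient matrix used here. $\Gamma_j=u\otimes\psi_j^\top-\psi_j\otimes u^\top$ is linear in $\partial u$ with an $O(1)$ coefficient $u$. Indeed
\[
\partial_s\Gamma_j=\tau\otimes\psi_j^\top-\psi_j\otimes\tau^\top+u\otimes(\partial_j\tau)^\top-(\partial_j\tau)\otimes u^\top ,
\]
and the last two terms are linear in $\tau$, so the $s$-integral just returns a linear expression in $\psi_j$, not $\int W_x\nabla W_x\,ds$. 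In the bilinear and Galilean estimates for $P_\ell\Gamma\,\partial P_kW$, the low factor must be measured in the maximal-function and local-smoothing components of an $X_\ell$-type norm. Those are of energy size ($b_\ell$, $\upsilon_\ell$), not the small $\alpha_\ell$. The bound actually available is the one in Lemma \ref{lem:transport-lowhigh-en}, namely $\upsilon_k\sum_{\ell\le k-20}b_\ell\upsilon_\ell$. Under your hypothesis $b_\ell\le C_1\upsilon_\ell$ this is only $\lesssim C_1(E_0+\eps_0^2)\upsilon_k$, because $\upsilon=\alpha+\beta$ contains the non-small energy envelope $\beta$. So the improvement $C_1\to C_1/2$ fails unless $E_0$ is small, and no ordering of ``$C_1$ large, then $\eps_0$ small'' repairs it.

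\textbf{The missing idea is that this term need not be small, only lower-triangular in frequency.} The paper uses smallness of $\eps_0$ solely to absorb the genuinely perturbative pieces. These are the $\eps_0^2b_k$ terms from Lemmas \ref{lem:pert-en} and \ref{lem:transport-pert-en}, under the deliberately weak hypothesis $b_k\le\eps_0^{-1/2}\upsilon_k$. That yields Proposition \ref{prop:bootineq-en}: $b_k\lesssim\upsilon_k+\upsilon_k\sum_{\ell\le k-20}b_\ell\upsilon_\ell$. Squaring and using Cauchy--Schwarz gives $b_k^2\lesssim(1+\sum_{\ell\le k-20}b_\ell^2)\upsilon_k^2$. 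Hence $B_k:=1+\sum_{\ell<k}b_\ell^2$ satisfies $B_{k+1}\le B_k(1+C\upsilon_k^2)$. Since $B_k\to1$ as $k\to-\infty$, a discrete Gronwall in $k$ gives
\[
\sup_kB_k\le\exp\Bigl(C\sum_k\upsilon_k^2\Bigr)\le\exp\bigl(C(\eps_0^2+E_0)\bigr).
\]
This constant is large but depends only on $E_0$, and it gives $b_k\lesssim_{E_0}\upsilon_k$. This induction on frequency is what lets the argument tolerate large energy. Your Steps 1--2 and the high--low/high--high part of Step 3 correspond to the paper's perturbative lemmas. The closing step, however, must be replaced by this frequency-Gronwall argument rather than a smallness bootstrap.
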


By Lemma \ref{lem:compare-en}, condition \eqref{eq:small-W-en} is equivalent to the smallness condition in the original variable,
\[
\norm{\nabla u}_{\ell^2L_{t,x}^4(I\times\R^2)}\le c(E_0)\eps_0.
\]
Thus Proposition \ref{prop:main-freq-en} can also be stated purely in terms of $\nabla u$. We keep $W_x$ only to make the covariant structure in the later argument more direct. Once this proposition has been proved later, it immediately yields the following local continuation statement; Theorems \ref{thm:globalize-en} and \ref{thm:window-global-en} are derived through this proposition.

\begin{proposition}[Local continuation]\label{prop:cont-step-en}
Fix $E_0<4\pi$. Then there exist $\eps_0=\eps_0(E_0)>0$ and $T=T(E_0)>0$ such that the following holds. Let $I=[t_0,t_1]\subset[0,\infty)$, and let
\[
u:I\times\R^2\to\Sph^2
\]
be a smooth solution of \eqref{eq:SM-en} satisfying
\[
u(t_0)-Q\in L_x^2,
\qquad
u(t_0)\in \dotH^1\cap\dotH^3,
\qquad
E(u)\le E_0.
\]
If
\[
\norm{\nabla u}_{\ell^2L^4_{t,x}(I\times\R^2)}\le \eps_0,
\]
then $u$ can be uniquely extended to
\[
u\in L_t^\infty\bigl([t_0,t_1+T]:\dotH^1\cap\dotH^3\bigr).
\]
\end{proposition}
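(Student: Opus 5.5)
The plan is to deduce Proposition \ref{prop:cont-step-en} from Proposition \ref{prop:main-freq-en}, used with some $\sigma_1\ge 2$ (say $\sigma_1=3$), together with the classical local theory.

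\textbf{Step 1: choice of $\eps_0$.} By Lemma \ref{lem:compare-en}, the hypothesis $\norm{\nabla u}_{\ell^2L^4}\le\eps_0$ gives
\[
\norm{W_x}_{\ell^2L^4(I\times\R^2)}\le C(E_0)\eps_0 .
\]
We therefore take $\eps_0(E_0):=C(E_0)^{-1}\eps_0'$, where $\eps_0'$ is the threshold of Proposition \ref{prop:main-freq-en} with $\sigma_1=3$. The proposition then yields $b_k(\sigma)\lesssim\upsilon_k(\sigma)$ for $0\le\sigma\le 2$.

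\textbf{Step 2: the $\sigma=2$ envelope is finite.} Summing squares of the envelope bound and using the $L_t^\infty L_x^2$ component of $X_k$ gives
\[
\sup_{t\in I}\norm{W_x(t)}_{\dotH^2}^2\lesssim\sum_k b_k(2)^2\lesssim\sum_k\alpha_k(2)^2+\sum_k\beta_k(2)^2 .
\]
The $\beta$ sum is bounded by $\norm{W_x(t_0)}_{\dotH^2}^2$. This is finite and controlled by $\norm{u(t_0)}_{\dotH^1\cap\dotH^3}$ via the algebra $W_x=(I+\I J_u)\nabla u$, Sobolev embedding, and the energy bound.

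The $\alpha$ sum is the delicate point, since a priori it involves $2^{2j}\norm{P_jW_x}_{L^4}$ on $I$. For a smooth solution on the compact interval $I$ it is finite, but it is not quantitatively controlled. I would control it in one of two ways:
\begin{itemize}
\item bound $2^{2j}\norm{P_jW_x}_{L^4_{t,x}}\lesssim 2^{2j}2^{j/2}|I|^{1/4}\norm{P_jW_x}_{L^\infty_tL^2_x}$ and close by a continuity argument in $t_1$; or, preferably,
\item note that Proposition \ref{prop:main-freq-en} may be applied with $\upsilon_k$ replaced by any envelope dominating the forcing, and close the $\sigma$-weighted estimate by bootstrap, since the high-$\sigma$ part enters linearly with small coefficient $\eps_0$.
\end{itemize}
Either way, the outcome is $\sup_{t\in I}\norm{u(t)}_{\dotH^3}<\infty$ (together with $\dotH^1$ from the energy), with a bound depending only on the data at $t_0$ and $E_0$. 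Translating back from $W_x$ to $\nabla u$ uses the reverse inequality of Lemma \ref{lem:compare-en} and the fact that $\Re W_m=\partial_m u$.

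\textbf{Step 3: extension.} The classical local theory cited in the introduction gives a time of existence depending only on an upper bound for $\norm{u(t_1)}_{\dotH^1\cap\dotH^3}$. It also preserves $u-Q\in L^2$ and the energy. Restarting at $t_1$ with the bound from Step 2 extends the solution to $[t_0,t_1+T]$ in $L^\infty_t(\dotH^1\cap\dotH^3)$, and uniqueness comes from the same local theory.

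\textbf{Main obstacle.} The main obstacle is the uniformity of $T$. As stated, $T=T(E_0)$ depends only on $E_0$, but the $\dotH^3$ bound from Step 2 depends on the data. To remove this dependence, I would use scaling. Rescale so that the tail of $\norm{P_{\ge k}\nabla u(t_1)}_{L^2}$ is small at a fixed frequency, then use that the local existence time in the critical small-$\ell^2L^4$ regime depends only on the energy and the smallness. Concretely, I would apply the argument on a short window after $t_1$ using the free Strichartz bound of Lemma \ref{lem:free-linear-en}. This keeps $\norm{\nabla u}_{\ell^2L^4([t_1,t_1+T])}$ small, so Proposition \ref{prop:main-freq-en} can be applied again. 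If such uniformity cannot be extracted, I would settle for $T$ depending also on $\norm{u(t_1)}_{\dotH^3}$. That weaker form still suffices for the globalization theorems, because Step 2 keeps this norm bounded near $T_+$.
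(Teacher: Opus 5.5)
Your outline is the paper's own route: Lemma~\ref{lem:compare-en}, then Proposition~\ref{prop:main-freq-en} summed to bound $u(t_1)$ in $\dotH^1\cap\dotH^3$, then the classical local theory. You are right that the two points you flag are exactly where that argument is unsupported. The paper simply asserts $\sum_k2^{2\sigma k}\upsilon_k(\sigma-1)^2\lesssim\norm{\nabla u(t_0)}_{\dotH^\sigma}^2+\norm{\nabla u}_{\ell^2L^4}^2$, which drops the $\sigma$-weights on the $\alpha$-part. It also calls $T$ a function of $E_0$ alone while saying $T$ depends on the a priori control, which involves $\norm{u(t_0)}_{\dotH^3}$. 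However, neither of your repairs closes these gaps, so the proposal does not prove the statement.

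In Step~2, your first bullet loses half a derivative. Summing $2^{2j}|I|^{1/4}2^{j/2}\norm{P_jW_x}_{L^\infty_tL^2_x}$ in $\ell^2$ requires $W_x\in L^\infty_t\dotH^{5/2}$, whereas $\sum_kb_k(2)^2$ only controls $L^\infty_t\dotH^2$. So no continuity argument in $t_1$ can close it.

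Your second bullet is not a consequence of Proposition~\ref{prop:main-freq-en}. Since $\norm{f}_{L^4_{t,x}}\le\norm{f}_{X_j}$, one has $\alpha_k(\sigma)\le b_k(\sigma)$ trivially. So the proposition only bounds the energy component of $b_k(\sigma)$ by its own $L^4$ component. At $\sigma>0$ that component is a higher-regularity Strichartz norm not controlled by $\norm{\nabla u}_{\ell^2L^4}$. What you would need is a new estimate $b_k(\sigma)\lesssim_{E_0}\beta_k(\sigma)$ in which $L^4$ information enters only at $\sigma=0$. That means redoing Propositions~\ref{prop:heat-en} and \ref{prop:two-body-envelope-en} and the bootstrap, all of which are phrased with $\upsilon_k$. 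Your reason for expecting this to close is also not what the available estimates give. In \eqref{eq:bootineq-en} the low--high coefficient is $\sum_{\ell\le k-20}b_\ell\upsilon_\ell$, which is $O(E_0)$, not $O(\eps_0)$, because the $\beta_\ell$ are energy-sized. The paper handles it only through the Gronwall product in $k$.

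The uniformity of $T$ cannot be obtained by rescaling either. The hypotheses $E\le E_0$ and $\norm{\nabla u}_{\ell^2L^4(I)}\le\eps_0$ are invariant under $u\mapsto u(\lambda^2t,\lambda x)$ with $\lambda=2^m$, but the conclusion is not. Applying the statement to the rescaled solution on $\lambda^{-2}I$ extends $u$ to $[t_0,t_1+\lambda^2T]$ for every $m$, i.e.\ globally. Any smooth solution has small $\ell^2L^4$ norm on short enough intervals. Hence the proposition with $T=T(E_0)$ is as strong as global regularity for all smooth data with $E\le E_0$, which the paper explicitly does not claim. Any rescaling you perform is at a solution-dependent scale, so it yields a solution-dependent time.

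Your fallback, with $T$ depending on $\norm{u(t_1)}_{\dotH^1\cap\dotH^3}$, is just the local theory applied at $t_1$; the smallness hypothesis plays no role. All the content then lies in a bound on $\norm{u(t_1)}_{\dotH^3}$ uniform in $t_1$, which is the unresolved Step~2. Even granting that bound, the fallback would not give Theorem~\ref{thm:window-global-en}. There the window $J_{t_*,k_*}$ moves toward $T_+$, and no uniform $\dotH^3$ control at its left endpoint is available.
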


The proof of Proposition \ref{prop:cont-step-en} only combines Proposition \ref{prop:main-freq-en} with the classical local theory. To avoid a circular argument, we postpone this short proof until after Proposition \ref{prop:main-freq-en} has been proved.

\section{Heat flow and two-body estimates}

Decay in the heat-flow direction is the central intermediate layer in the closing argument. For each fixed $t$, let $u(s,t,x)$ be the harmonic map heat flow with initial data $u(t,\cdot)$:
\[
\partial_s u=\tau(u),
\qquad
u(0,t,x)=u(t,x).
\]
Below the threshold $4\pi$, this heat flow exists globally and converges to the constant state $Q$.

The next subsection isolates the parabolic part of the argument. It first records the heat-flow smoothing mechanism and then turns it into dyadic decay estimates for both $W_x$ and $\Gamma_x$.

\subsection{Heat-flow decay}

By Proposition \ref{prop:cov-W-en} and Proposition \ref{prop:ambient-W-en}, the fields $W_m$ satisfy the direct geometric equation \eqref{eq:ambient-heat-en} along the heat flow. In order to put the subsequent dyadic calculations in a form that can be used directly in the text, we first record a standard parabolic smoothing input.

\begin{lemma}[Heat smoothing]\label{lem:heat-smooth-en}
Under the small controlling-norm assumption \eqref{eq:small-W-en}, for every integer $r\ge0$ one has
\begin{align}
\sup_{s>0}s^{r/2}\norm{\nabla_x^rW_x(s)}_{L_t^\infty L_x^2(I\times\R^2)}
+\sup_{s>0}s^{(r+1)/2}\norm{\nabla_x^rW_x(s)}_{L_t^\infty L_x^\infty(I\times\R^2)}
&\lesssim_{E_0,r}1,\label{eq:heat-smooth-W-en}\\
\sup_{s>0}s^{r/2}\norm{\nabla_x^r\Gamma_x(s)}_{L_t^\infty L_x^2(I\times\R^2)}
+\sup_{s>0}s^{(r+1)/2}\norm{\nabla_x^r\Gamma_s(s)}_{L_t^\infty L_x^2(I\times\R^2)}
&\lesssim_{E_0,r}1,\label{eq:heat-smooth-Gamma-en}\\
\sup_{s>0}s^{(r+1)/2}\norm{\nabla_x^r\Gamma_x(s)}_{L_t^\infty L_x^\infty(I\times\R^2)}
&\lesssim_{E_0,r}1.\notag
\end{align}
\end{lemma}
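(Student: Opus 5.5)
The plan is to prove the estimates slice by slice in $t$: for each fixed $t\in I$ we deal with the harmonic map heat flow $u(s,t,\cdot)$ issuing from $u(t,\cdot)$, show bounds uniform in $t$, and then take the supremum over $t$. The smallness hypothesis \eqref{eq:small-W-en} is in fact not used here. The only inputs are $E(u(t))\le E_0<4\pi$, the trivial homotopy class (from $u(t)-Q\in L^2_x$), and energy conservation, Proposition \ref{prop:energy-balance-en}, which makes the energy bound uniform in $t$.

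Below $4\pi$ the heat flow is global and converges to $Q$, with no bubbling. By the standard Struwe-type argument this gives a uniform \emph{non-concentration} statement: there is $r_0(E_0)$-type control such that the energy in any parabolic ball of radius $\sqrt s$ at heat time $s$ is bounded by a small $\eta(E_0)$, uniformly in $s$, $t$ and the center. For $E<4\pi$ this uniformity follows from compactness of subthreshold heat flows together with scaling invariance. I regard this as the main obstacle: all the scale-invariant bounds below hinge on it. I would quote it from the subthreshold heat-flow literature \cite{SmithGeom,SmithCond,BIKT} rather than reprove it.

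Given small local energy on parabolic cylinders, the higher bounds come from a scale-invariant $\eps$-regularity induction on $r$, applied to the covariant heat equation \eqref{eq:cov-heat-W-en}.
\begin{itemize}
\item Case $r=0$: the $L^2$ bound is just the energy, since $\hpair{W}{W}=2|\psi|^2$. The $L^\infty$ bound $s^{1/2}\|W_x(s)\|_{L^\infty}\lesssim1$ follows from the Bochner/Kato inequality $(\partial_s-\Delta)|W_x|\lesssim|W_x|^3$, together with a parabolic Moser iteration on the cylinder of size $\sqrt s$ in which the small local energy absorbs the cubic term.
\item Case $r\ge1$: differentiate \eqref{eq:ambient-heat-en}. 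Then $\nabla_x^rW_x$ obeys a heat equation whose forcing is a sum of products of lower derivatives of $W_x$ and $\Gamma_x$. Energy estimates weighted by $s^{r}$, combined with the inductive $L^\infty$ bounds and Gagliardo--Nirenberg, give the $L^2$ bound. Parabolic localization plus Sobolev on the $\sqrt s$ scale then gives the $L^\infty$ bound.
\end{itemize}

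The $\Gamma$ bounds are algebraic consequences of the $W$ bounds. Write
\[
\Gamma_x=u\otimes\psi_x^\top-\psi_x\otimes u^\top,
\qquad
\psi_x=\tfrac12(W_x+\overline{W_x}),
\]
and $\nabla_x^r u$ is a polynomial in $\nabla^{\le r-1}\psi_x$ and $u$, with $|u|=1$. Leibniz together with the $W$ bounds then gives the $L^2$ and $L^\infty$ estimates for $\nabla^r_x\Gamma_x$, with the correct powers of $s$ by counting derivatives. For $\Gamma_s=u\otimes\tau^\top-\tau\otimes u^\top$ one uses $\tau=\nabla_j\psi_j$, which is one $\nabla_x$ of $\psi$ plus $|\psi|^2u$. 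Hence
\[
\|\nabla^r\Gamma_s\|_{L^2}\lesssim \|\nabla^{r+1}\psi\|_{L^2}+\cdots\lesssim s^{-(r+1)/2},
\]
where the quadratic terms are handled with one $L^\infty$ factor from \eqref{eq:heat-smooth-W-en}. All constants depend only on $E_0$ and $r$, which completes the plan.
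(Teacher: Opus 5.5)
Your overall structure matches the paper's. You work slice by slice in $t$, get a scale-invariant first-order bound from the absence of nonconstant harmonic maps $\R^2\to\Sph^2$ below $4\pi$, upgrade parabolically, and read off the $\Gamma$ bounds algebraically. You are also right that \eqref{eq:small-W-en} plays no role. Your upgrade step (weighted energy estimates instead of the paper's Duhamel formula on $[s/2,s]$) and your $\Gamma_x,\Gamma_s$ step are fine.

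The gap is in the one input everything rests on. As you state it, the energy in balls of radius $\sqrt s$ at heat time $s$ lies below a small $\eta$. That is not the right statement. $E(u(s);B(x,\sqrt s))$ is scale invariant, and for data whose energy sits at scale $\sqrt s$ it is a fixed, profile-dependent fraction of $E_0$. Nothing forces it below the absolute threshold that your Moser absorption requires. What can hold is smallness on balls of radius $c(E_0)\sqrt s$. In that form, however, it is equivalent to the $r=0$ case of the lemma, $\sup_s s^{1/2}\norm{\nabla u(s)}_{L^\infty}\le C(E_0)$. One direction is your $\eps$-regularity; the other is the trivial bound $E(u(s);B(x,c\sqrt s))\le\frac{\pi}{2}c^2\,s\,\norm{\nabla u(s)}_{L^\infty}^2$. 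So the Moser step adds nothing, and the whole content of the lemma sits in the fact you propose to cite. ``Compactness plus scaling'' names the mechanism but does not supply it.

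The paper proves this step directly, without any non-concentration lemma or $\eps$-regularity. Suppose $s_n|\nabla u(s_n,x_n)|^2\to\infty$ and rescale by $\lambda_n=|\nabla u(s_n,x_n)|^{-1}$. Then $s_n/\lambda_n^2\to\infty$, and the rescaled flows live on ever longer backward cylinders with $|\nabla\nu_n(0,0)|=1$ and energy at most $E_0$. Extract an ancient limit and use the dissipation bound $\int_0^\infty\norm{\partial_su}_{L^2}^2\,ds\le E_0$ to see that it is stationary. This gives a nonconstant harmonic map $\R^2\to\Sph^2$ of energy at most $E_0<4\pi$, a contradiction.

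You need to write this argument out, or cite a precise theorem rather than a heuristic. If you write it out, two points need care that the paper's sketch skips:
\begin{enumerate}
\item Choose the blow-up points by a maximality (point-picking) device, so that the rescaled gradients are uniformly bounded on the cylinders. This is what actually gives compactness.
\item Stationarity of the limit is automatic for a single flow, because the dissipation over the rescaled windows $[s_n-T\lambda_n^2,s_n]$ tends to zero. A constant depending only on $E_0$, uniform over all slices $t$, requires running the argument over sequences of different flows. There that step is no longer automatic and needs further justification.
\end{enumerate}
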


\begin{proof}
Fix a physical time $t$ and regard $u(s,x):=u(s,t,x)$ as a harmonic map heat flow depending only on the heat variable $s$. The energy dissipation identity
\[
\frac{d}{ds}E(u(s))=-\int_{\R^2}|\partial_su(s,x)|^2\,dx
\]
implies
\begin{equation}\label{eq:heat-energy-monotone-proof-en}
\sup_{s>0}\norm{W_x(s)}_{L_x^2}\lesssim E_0^{1/2},
\qquad
\int_0^\infty\norm{\partial_su(s)}_{L_x^2}^2\,ds\lesssim E_0.
\end{equation}

The first step is the scale-invariant first-order bound. We claim that
\begin{equation}\label{eq:heat-gradient-critical-bound-en}
\sup_{s>0}s^{1/2}\norm{\nabla_xu(s)}_{L_x^\infty}\le C(E_0).
\end{equation}
If this failed, there would be points $(s_n,x_n)$ such that $s_n|\nabla_xu(s_n,x_n)|^2\to\infty$. Set
\[
\lambda_n:=|\nabla_xu(s_n,x_n)|^{-1},
\qquad
\nu_n(\sigma,y):=u(s_n+\lambda_n^2\sigma,x_n+\lambda_n y).
\]
Since $s_n/\lambda_n^2\to\infty$, each $\nu_n$ is defined at least on a fixed backward unit parabolic cylinder, and satisfies
\[
|\nabla_y\nu_n(0,0)|=1,
\qquad
E(\nu_n(\sigma))=E(u(s_n+\lambda_n^2\sigma))\le E_0<4\pi.
\]
By local parabolic compactness, a subsequence converges to a nontrivial ancient limit $\nu_\infty$. This limit is stationary in the heat variable, and hence gives a nonconstant finite-energy harmonic map $\R^2\to\Sph^2$ with energy not exceeding $E_0<4\pi$. This contradicts the fact that the least energy of a nonconstant sphere-valued harmonic map is $4\pi$. Hence \eqref{eq:heat-gradient-critical-bound-en} holds. Since
\[
|W_x|\sim |\nabla_xu|,
\qquad
|\Gamma_x|\lesssim |\nabla_xu|,
\]
we obtain
\begin{equation}\label{eq:heat-first-order-pointwise-en}
\sup_{s>0}s^{1/2}\norm{W_x(s)}_{L_x^\infty}
+
\sup_{s>0}s^{1/2}\norm{\Gamma_x(s)}_{L_x^\infty}
\lesssim_{E_0}1.
\end{equation}

The second step upgrades this scale-invariant bound to higher parabolic smoothing. For each component $W_m$, equation \eqref{eq:ambient-heat-en} takes the form
\[
(\partial_s-\Delta)W_m=2\Gamma_j\partial_jW_m+V_m^{(s)}.
\]
Using \eqref{eq:Vsbound-en}, \eqref{eq:heat-first-order-pointwise-en}, and \eqref{eq:heat-energy-monotone-proof-en}, the right-hand side has scale-uniform $L_x^2$ control on each heat layer $[s/2,s]$. More precisely,
\begin{equation}\label{eq:heat-force-L2-proof-en}
\sup_{s'>0}
\Bigl(
(s')^{1/2}\norm{\Gamma_x(s')}_{L_x^\infty}
+(s')\norm{\partial_x\Gamma_x(s')}_{L_x^2}
+(s')\norm{\Gamma_s(s')}_{L_x^2}
+(s')\norm{V_x(s')}_{L_x^2}
\Bigr)
\lesssim_{E_0}1.
\end{equation}
Now fix $s>0$ and integrate the heat equation from $s/2$ to $s$:
\[
W_m(s)=e^{\frac s2\Delta}W_m(s/2)
+
\int_{s/2}^s e^{(s-s')\Delta}\bigl(2\Gamma_j\partial_jW_m+V_m^{(s')}\bigr)(s')\,ds'.
\]
For any integer $r\ge0$, the heat kernel estimates
\[
\norm{\nabla_x^re^{\tau\Delta}f}_{L_x^2}\lesssim \tau^{-r/2}\norm{f}_{L_x^2},
\qquad
\norm{\nabla_x^re^{\tau\Delta}f}_{L_x^\infty}\lesssim \tau^{-(r+1)/2}\norm{f}_{L_x^2}
\]
together with \eqref{eq:heat-energy-monotone-proof-en} and \eqref{eq:heat-force-L2-proof-en} yield
\[
\sup_{s>0}s^{r/2}\norm{\nabla_x^rW_x(s)}_{L_x^2}
+
\sup_{s>0}s^{(r+1)/2}\norm{\nabla_x^rW_x(s)}_{L_x^\infty}
\lesssim_{E_0,r}1.
\]
Taking the supremum over the physical time $t$ gives \eqref{eq:heat-smooth-W-en}.

It remains to handle $\Gamma_x$ and $\Gamma_s$. By \eqref{eq:intro-complex-notation-en} and
\[
\partial_su=\Delta u+|\nabla_xu|^2u,
\]
$\nabla_x^r\Gamma_x$ is a linear combination of derivatives up to order $r+1$ of $u$, while $\nabla_x^r\Gamma_s$ is built from derivatives up to order $r+2$ of $u$ and lower-order products. Substituting the higher estimates for $W_x$ obtained above gives \eqref{eq:heat-smooth-Gamma-en} and the $L_x^\infty$ bound following it.
\end{proof}

\begin{proposition}[Heat-flow decay]\label{prop:heat-en}
Under the small controlling-norm assumption \eqref{eq:small-W-en}, for every $s>0$ and every $k\in\mathbb Z$,
\begin{equation}\label{eq:heatdecay-en}
\norm{P_kW_x(s)}_{X_k(I)}
+\norm{P_k\Gamma_x(s)}_{L_{t,x}^4(I\times\R^2)}
\lesssim \upsilon_k(1+s2^{2k})^{-4}.
\end{equation}
\end{proposition}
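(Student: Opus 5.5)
We prove \eqref{eq:heatdecay-en} by a bootstrap in the heat variable $s$, run on the ambient heat equation \eqref{eq:ambient-heat-en}. Define
\[
B(S):=\sup_{0\le s\le S}\sup_{k}\upsilon_k^{-1}(1+s2^{2k})^{4}\Bigl(\norm{P_kW_x(s)}_{X_k(I)}+\norm{P_k\Gamma_x(s)}_{L^4_{t,x}}\Bigr).
\]
The goal is to show $B(S)\lesssim 1+\eps_0 B(S)+\eps_0B(S)^2$, uniformly in $S$. By continuity in $s$, this gives $B\lesssim1$ once $\eps_0$ is small.

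\textbf{Step 1: the case $s=0$.} Here $\norm{P_kW_x(0)}_{X_k(I)}\le b_k$. Inside the eventual bootstrap for Proposition \ref{prop:main-freq-en}, we may therefore assume $b_k\lesssim\upsilon_k$. To handle $\Gamma_x(0)$, write $\Gamma_x=u\otimes\psi_x^\top-\psi_x\otimes u^\top$. Lemma \ref{lem:multiplier-en} with $a=u$, together with Lemma \ref{lem:compare-en}, gives the dyadic bound $\norm{P_k\Gamma_x}_{L^4}\lesssim\alpha_k$. For this we need the multiplier lemma in its dyadic, envelope-weighted form; the Bony splitting in its proof gives exactly this, because the envelope loses at most $2^{\delta|j-k|}$.

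\textbf{Step 2: Duhamel in $s$ with heat decay.} For $s>0$, write
\[
P_kW_m(s)=e^{s\Delta}P_kW_m(0)+\int_0^s e^{(s-s')\Delta}P_k\bigl(2\Gamma_j\partial_jW_m+V_m\bigr)(s')\,ds'.
\]
The multiplier $e^{s\Delta}P_k$ acts boundedly on every component of $X_k$, since it is translation invariant with an $L^1$ kernel, with the bound $e^{-cs2^{2k}}$. This gives the factor $(1+s2^{2k})^{-4}$ for the linear term.

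For the forcing term, use the frequency-localized paraproduct. Split $\Gamma_j\partial_jW_m$ into low–high, high–low and high–high parts.
\begin{itemize}
\item In the low–high part, $P_{<k-4}\Gamma$ is controlled in $L^\infty$ by Lemma \ref{lem:heat-smooth-en}, giving $(s')^{-1/2}$. Combined with $2^k\norm{P_kW(s')}_{X_k}$, the $s'$-integral $\int_0^s e^{-c(s-s')2^{2k}}(s')^{-1/2}2^k(1+s'2^{2k})^{-4}\,ds'$ is $O((1+s2^{2k})^{-4})$ up to a constant depending on $E_0$.
\item The other pieces use the bootstrap bound on $P_j\Gamma_x(s')$ and on $P_jW_x(s')$, the decay factor for the high frequency, and Bernstein.
\item The terms in $V_m$ are at least cubic. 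By \eqref{eq:Vsbound-en} and \eqref{eq:heat-force-L2-proof-en} they carry an extra $(s')^{-1}$ weight, which the heat kernel absorbs in the same manner.
\end{itemize}
The bound for $P_k\Gamma_x(s)$ then follows from the bound for $W_x(s)$ by the same multiplier argument as in Step 1, applied at heat time $s$.

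\textbf{Main obstacle.} The difficulty is that the low–high transport term $P_{<k}\Gamma\,\partial P_kW$ loses a full derivative. The $L^\infty$ bound $\norm{\Gamma(s')}_{L^\infty}\lesssim(s')^{-1/2}$ only just compensates for this, and the resulting $s'$-integral is only borderline convergent near $s'\approx 2^{-2k}$. It yields a constant depending on $E_0$, not a small one. Since $E_0$ is merely below $4\pi$, this term cannot be made perturbative by smallness.

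My first attempt would be a Grönwall argument in $s$ at fixed $k$. A second option is to split $P_{<k-4}\Gamma(s')$ into frequencies below $(s')^{-1/2}$, which are small by the $\ell^2L^4$ smallness \eqref{eq:small-W-en} via Bernstein in time–space, and frequencies above it, which decay rapidly because of heat smoothing. Either route should produce an exponent-preserving bound with constant $C(E_0)$. This constant is acceptable, since \eqref{eq:heatdecay-en} allows implicit constants depending on $E_0$. The $\delta$-envelope losses are harmless as long as $\delta<1/6$.
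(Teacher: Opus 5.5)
Your route differs from the paper's, and as written it has two gaps that prevent it from proving \eqref{eq:heatdecay-en}.

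\textbf{First gap: Step 1 assumes the hardest part of the claim.} You take $\norm{P_kW_x(0)}_{X_k(I)}\lesssim\upsilon_k$ as given ``inside the eventual bootstrap.'' But this is the $s=0$ instance of \eqref{eq:heatdecay-en} itself. After taking the envelope supremum, it is exactly the $\sigma=0$ case of Proposition~\ref{prop:main-freq-en}.

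In the paper the dependence runs the other way. Proposition~\ref{prop:heat-en} feeds Proposition~\ref{prop:two-body-envelope-en}, hence Lemma~\ref{lem:transport-lowhigh-en} and the proof of Proposition~\ref{prop:main-freq-en}. The bootstrap hypothesis actually available there is only \eqref{eq:bootstrap-en}, $b_k\le\eps_0^{-1/2}\upsilon_k$. With that input, your argument would at best give \eqref{eq:heatdecay-en} with an $\eps_0^{-1/2}$ loss.

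Your observation that $e^{s\Delta}P_k$ is bounded on $X_k$ with norm $\lesssim e^{-cs2^{2k}}$ is fine. However, propagating $X_k$ bounds forward in $s$ can never produce the $s=0$ bound. The paper instead gets the $X_k$ bound at every fixed heat time, including $s=0$, from the Schr\"odinger equation \eqref{eq:heat-slice-sch-en} in $t$ via Proposition~\ref{prop:linear-en}, with the forcing placed in $Y_k$ rather than $X_k$. It uses the heat Duhamel formula only at the single time $t_0$ and only in $L^2_x$, to get $\norm{P_kW_x(s,t_0)}_{L^2_x}\lesssim\beta_kw_k(s)$.

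\textbf{Second gap: the bootstrap inequality is never derived.} The inequality $B\lesssim 1+\eps_0B+\eps_0B^2$ is not what your estimates give. You already note that the low--high term contributes $C(E_0)B$, but the same holds for the other paraproduct and cubic pieces. Your bootstrap controls heat slices only by $B\upsilon_jw_j(s')$, and $\upsilon_j\ge\beta_j$ is energy-sized. The $\ell^2L^4$ smallness is known only at $s=0$, so there is no source of $\eps_0$.

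Neither remedy closes the argument:
\begin{itemize}
\item Remedy (b) fails as stated. Bernstein turns $L^4_x$ into $L^\infty_x$ at fixed $t$, but gives no $L^\infty_t$ control from $L^4_t$. Moreover, for $s'\lesssim 2^{-2k}$ all of $P_{<k-4}\Gamma(s')$ lies below $(s')^{-1/2}$, and its $L^\infty_{t,x}$ size is set by the energy. Low frequencies have small $L^4_{t,x}(I)$ norm on short intervals whatever their amplitude.
\item Remedy (a) would work for a single decoupled frequency, since the growth $\exp(2C(s2^{2k})^{1/2})$ is beaten by $e^{-cs2^{2k}}$. But the paraproduct couples $P_kW$ to $P_{k+O(1)}W$, and the nonlinearity couples all frequencies, which have different heat-decay rates. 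Your sup-over-$k$ quantity has no $s$-integrable kernel, so Gr\"onwall does not apply to it as set up.
\end{itemize}

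The $V$-terms are also mishandled. A weight $(s')^{-1}$ is not integrable at $s'=0$, where the heat factor is only $e^{-cs2^{2k}}$. In addition, \eqref{eq:heat-force-L2-proof-en} is a fixed-time $L^2_x$ bound and says nothing about the $L^4_{t,x}$, maximal, lateral and local-smoothing parts of $X_k$.

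The paper treats the low--high transport piece in the Schr\"odinger direction, through the joint bilinear hypothesis \eqref{eq:heat-bootstrap-bilin-en} and Lemma~\ref{lem:product-en}. Even there it is bounded only by an $E_0$-dependent constant. So this term needs a non-perturbative (bilinear) mechanism rather than smallness, and your proposal does not supply one.
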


\begin{proof}
Put
\[
w_k(s):=(1+s2^{2k})^{-4}.
\]
We run a joint bootstrap on $0\le s,\tilde s\le S$. Assume that, for a sufficiently large constant $M\ge1$,
\begin{align}
\norm{P_kW_x(s)}_{X_k(I)}+
\norm{P_k\Gamma_x(s)}_{L_{t,x}^4(I\times\R^2)}
&\le M\upsilon_kw_k(s),
\label{eq:heat-bootstrap-en}\\
\norm{(P_j\Gamma_x(s))(P_kW_x(\tilde s))}_{L_{t,x}^2(I\times\R^2)}
&\le M2^{-\frac12|j-k|}\upsilon_j\upsilon_kw_j(s)w_k(\tilde s).
\label{eq:heat-bootstrap-bilin-en}
\end{align}
We prove that, if $\eps_0=\eps_0(E_0)$ is sufficiently small, then the constant in \eqref{eq:heat-bootstrap-en} can be improved to $M/2$.

First, we prove $L_x^2$ decay for the initial data of the heat-flow slices. Fix $t=t_0$ and write the Duhamel formula for \eqref{eq:ambient-heat-en}:
\begin{equation}\label{eq:heat-duhamel-in-proof-en}
P_kW_x(s,t_0)
=e^{s\Delta}P_kW_x(0,t_0)
+\int_0^s e^{(s-s')\Delta}P_k\Bigl(2\Gamma_j\partial_jW_x+V^{(s')}\Bigr)(t_0)\,ds'.
\end{equation}
The two terms in \eqref{eq:heat-duhamel-in-proof-en} are estimated separately. The linear term is bounded by frequency-localized heat kernel decay:
\begin{equation}\label{eq:heat-initial-linear-en}
\norm{e^{s\Delta}P_kW_x(0,t_0)}_{L_x^2}
\lesssim e^{-cs2^{2k}}\beta_k
\lesssim \beta_kw_k(s).
\end{equation}
For the Duhamel term, split the integral into $[0,s/2]$ and $[s/2,s]$. On the first subinterval, use
\[
e^{-c(s-s')2^{2k}}\lesssim_N w_k(s)
\qquad (0\le s'\le s/2)
\]
and the parabolic smoothing input of Lemma \ref{lem:heat-smooth-en}. This gives
\[
\norm{e^{(s-s')\Delta}P_k\bigl(2\Gamma_j\partial_jW_x+V^{(s')}\bigr)(t_0)}_{L_x^2}
\lesssim w_k(s)(s')^{-1}\beta_k.
\]
On the second subinterval $s'\in[s/2,s]$, one uses the same parabolic smoothing bounds directly and the fact that $s'\sim s$, obtaining again
\[
\int_{s/2}^s
\norm{e^{(s-s')\Delta}P_k\bigl(2\Gamma_j\partial_jW_x+V^{(s')}\bigr)(t_0)}_{L_x^2}\,ds'
\lesssim \beta_kw_k(s).
\]
Together with \eqref{eq:heat-initial-linear-en}, this yields
\begin{equation}\label{eq:heat-initial-slice-en}
\norm{P_kW_x(s,t_0)}_{L_x^2}
\lesssim \beta_kw_k(s).
\end{equation}

Second, we prove forcing decay for each fixed heat time $s$. By \eqref{eq:ambient-W-en}, $W_x(s)$ satisfies, as a function of $(t,x)$,
\begin{equation}\label{eq:heat-slice-sch-en}
(\I\partial_t+\Delta)P_kW_x(s)=P_kN(s),
\end{equation}
where
\begin{equation}\label{eq:heat-slice-nonlinearity-en}
N(s):=-2\Gamma_j(s)\partial_jW_x(s)-\Gamma_j(s)^2W_x(s)
+\frac{\I}{2}\Im\hpair{W_x(s)}{W_j(s)}W_j(s).
\end{equation}
The decomposition \eqref{eq:heat-slice-nonlinearity-en} separates the transport term from the zeroth-order cubic terms. The latter are treated by Lemma \ref{lem:product-en}, the bootstrap assumption \eqref{eq:heat-bootstrap-en}, and
\[
\sum_{\ell\in\mathbb Z}\norm{P_\ell W_x(s)}_{L_{t,x}^4}^2
\lesssim M^2\sum_\ell \alpha_\ell^2w_\ell(s)^2
\lesssim M^2\eps_0^2.
\]
Thus
\begin{equation}\label{eq:heat-perturbative-force-en}
\norm{P_k\Bigl(\Gamma_j(s)^2W_x(s)
-\frac{\I}{2}\Im\hpair{W_x(s)}{W_j(s)}W_j(s)\Bigr)}_{Y_k(I)}
\lesssim \eps_0^2\upsilon_kw_k(s).
\end{equation}
For the transport term, decompose $\Gamma_j(s)\partial_jW_x(s)$ into low-high, comparable, and high-high parts. The comparable-frequency and high-high pieces are estimated directly using the dyadic decomposition, the small $L^4$ norm, Bernstein, and Lemma \ref{lem:product-en}:
\begin{equation}\label{eq:heat-transport-pert-en}
\norm{P_k\bigl(\Gamma_j(s)\partial_jW_x(s)\bigr)^{\mathrm{comp+hh}}}_{Y_k(I)}
\lesssim \eps_0^2\upsilon_kw_k(s).
\end{equation}
For the low-high part, put the derivative on the high-frequency factor and use the joint bootstrap assumption \eqref{eq:heat-bootstrap-bilin-en}:
\begin{align}
\norm{P_k\bigl(\Gamma_j(s)\partial_jW_x(s)\bigr)^{\mathrm{lh}}}_{Y_k(I)}
&\lesssim
\upsilon_kw_k(s)\sum_{\ell\le k-20}\upsilon_\ell w_\ell(s)\,\upsilon_\ell\notag\\
&\lesssim_E \upsilon_kw_k(s),
\label{eq:heat-transport-lh-en}
\end{align}
where the last step uses only the $\ell^2$ summability of $\{\upsilon_\ell\}$. Combining \eqref{eq:heat-perturbative-force-en}, \eqref{eq:heat-transport-pert-en}, and \eqref{eq:heat-transport-lh-en}, we obtain
\begin{equation}\label{eq:heat-force-en}
\norm{P_kN(s)}_{Y_k(I)}
\lesssim_E \upsilon_kw_k(s).
\end{equation}

Third, apply the linear estimate to \eqref{eq:heat-slice-sch-en}. Using Proposition \ref{prop:linear-en}, \eqref{eq:heat-initial-slice-en}, and \eqref{eq:heat-force-en}, we get
\[
\norm{P_kW_x(s)}_{X_k(I)}
\lesssim \beta_kw_k(s)+C_E\upsilon_kw_k(s)
\lesssim C_E\upsilon_kw_k(s).
\]
Taking the bootstrap constant $M=2C_E$, this becomes
\begin{equation}\label{eq:heat-bootstrap-improved-W-en}
\norm{P_kW_x(s)}_{X_k(I)}
\le \frac{M}{2}\upsilon_kw_k(s).
\end{equation}

Fourth, improve the bootstrap bound for $\Gamma_x$. Since
\[
\Gamma_x=u\otimes(\partial_xu)^\top-(\partial_xu)\otimes u^\top,
\qquad
\partial_xu=\frac12(W_x+\overline{W_x}),
\]
Bony decomposition gives
\[
\norm{P_k\Gamma_x(s)}_{L_{t,x}^4}
\lesssim
\norm{P_k\partial_xu(s)}_{L_{t,x}^4}
+
\sum_{\ell\ge k-10}2^{k-\ell}\norm{P_\ell\partial_xu(s)}_{L_{t,x}^4}.
\]
Using \eqref{eq:heat-bootstrap-improved-W-en} and the slow-variation summation for frequency envelopes, we get
\[
\norm{P_k\Gamma_x(s)}_{L_{t,x}^4(I\times\R^2)}
\le \frac{M}{2}\upsilon_kw_k(s).
\]
Thus the first part of the joint bootstrap has been improved; the conclusion \eqref{eq:heatdecay-en} follows by the continuity argument.
\end{proof}

The direct connection matrix $\Gamma_x$ inherits the same dyadic control as $W_x$.

\begin{lemma}[Dyadic control for $\Gamma_x$]\label{lem:gamma-dyadic-en}
For every compact interval $I$ and every $k\in\mathbb Z$,
\begin{equation}\label{eq:gamma-dyadic-en}
\norm{P_k\Gamma_x}_{L_{t,x}^4(I\times\R^2)}
+\norm{P_k\Gamma_x}_{L_t^\infty L_x^2(I\times\R^2)}
\lesssim \alpha_k+\beta_k.
\end{equation}
More generally, if $\gamma_k(\sigma)$ is the slow-varying envelope constructed from $P_k\Gamma_x$ in the same way as in \eqref{eq:ak-en}--\eqref{eq:betak-en}, then
\begin{equation}\label{eq:gamma-envelope-en}
\gamma_k(\sigma)\lesssim \upsilon_k(\sigma),
\qquad \sigma\ge0.
\end{equation}
\end{lemma}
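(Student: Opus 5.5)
The plan is to exploit the explicit algebraic form
\[
\Gamma_x=u\otimes(\partial_xu)^\top-(\partial_xu)\otimes u^\top,
\qquad
\partial_xu=\tfrac12\bigl(W_x+\overline{W_x}\bigr),
\]
and to write $u=Q+(u-Q)$. Then
\[
\Gamma_x=\bigl[Q\otimes(\partial_xu)^\top-(\partial_xu)\otimes Q^\top\bigr]+\bigl[(u-Q)\otimes(\partial_xu)^\top-(\partial_xu)\otimes(u-Q)^\top\bigr].
\]
The first bracket is a constant-coefficient linear image of $\partial_xu$. Its dyadic pieces are bounded directly by $\norm{P_k\partial_xu}\le\norm{P_kW_x}$, exactly as in the reverse half of Lemma \ref{lem:compare-en}, in both $L^4_{t,x}$ and $L^\infty_tL^2_x$. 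The second bracket is a paraproduct of $a=u-Q$ (with $|a|\le2$ and $\nabla a=\partial_xu$) against $g=\partial_xu$. I will treat it with the Bony decomposition $T_ag+T_ga+R(a,g)$ from the proof of Lemma \ref{lem:multiplier-en}, but keep the estimates dyadic in $k$ rather than summing in $\ell^2$.

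For the low--high piece $T_ag$, $P_k$ sees only $|j-k|\le4$, and $\norm{S_{j-4}a}_{L^\infty}\le2$. This gives a bound by $\sum_{|j-k|\le4}\norm{P_j\partial_xu}$ in either norm, hence by $\alpha_k$ (resp.\ the $L^\infty_tL^2_x$ analogue). For the high--low and resonant pieces, the factor carrying the high frequency is $P_\ell a$ with $\ell\ge k-5$. Bernstein in the form $\norm{P_\ell a}_{L^\infty_x}\lesssim\norm{P_\ell\nabla a}_{L^2_x}$ then yields a bound by
\[
\sum_{\ell\ge k-10}2^{-c(\ell-k)}\norm{P_\ell\partial_xu}_{L^\infty_tL^2_x}\,\norm{g}_{L^4_{t,x}}
\]
for the $L^4$ estimate. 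For the $L^\infty_tL^2_x$ estimate I put $P_\ell a$ in $L^\infty$ and $P_{\sim\ell}g$ in $L^2$ instead. To get a genuine off-diagonal gain $2^{k-\ell}$ (as recorded in the fourth step of the proof of Proposition \ref{prop:heat-en}), I apply Bernstein to the output, $\norm{P_k(fh)}_{L^2}\lesssim2^k\norm{fh}_{L^1}$, and then Cauchy--Schwarz with $\norm{P_\ell a}_{L^2}\lesssim2^{-\ell}\norm{P_\ell\partial_xu}_{L^2}$. For the $L^4$ version, $\norm{P_k(fh)}_{L^4_x}\lesssim2^{k}\norm{fh}_{L^{4/3}_x}$ with $P_\ell a\in L^2_x$ and $g\in L^4_x$ gives the same $2^{k-\ell}$ factor, at the cost of $\norm{P_\ell a}_{L^\infty_tL^2_x}\lesssim2^{-\ell}\beta$-type quantities.

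The key step is then the envelope summation
\[
\sum_{\ell\ge k-10}2^{k-\ell}\upsilon_\ell\lesssim\upsilon_k,
\]
which holds because $\upsilon_\ell\le2^{\delta|\ell-k|}\upsilon_k$ by construction (slow variation), provided $\delta<1$. This gives \eqref{eq:gamma-dyadic-en}. The $L^\infty_tL^2_x$ piece requires $\norm{P_jW_x}_{L^\infty_tL^2_x}$ rather than just the initial data. I would obtain it from $\norm{P_jW_x}_{L^\infty_tL^2_x}\le\norm{P_jW_x}_{X_j}$ together with Proposition \ref{prop:heat-en} at $s=0$, which bounds it by $\upsilon_j$. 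That bound is available under the standing smallness assumption \eqref{eq:small-W-en}, so the lemma should be read within that regime.

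For \eqref{eq:gamma-envelope-en} with $\sigma>0$, I repeat the argument with weights $2^{\sigma k}$. In the high--low sum the factor becomes $2^{\sigma k}2^{k-\ell}2^{-\sigma\ell}\cdot2^{\sigma\ell}\norm{P_\ell\partial_xu}$, which only improves the decay. The low--high sum is diagonal. The only loss is in the $L^\infty$ bound on $S_{j-4}a$, which is harmless.

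The main obstacle I expect is the high--low term in $L^\infty_tL^2_x$ with both factors at very high frequency. There I must avoid using the crude $L^\infty$ bound on $g$ and instead extract the $2^{k-\ell}$ decay via output Bernstein. If $\sigma$ is large, the $2^{k-\ell}$ gain might instead need to be combined with the $\dot H^\sigma$ envelope of $a$ itself, which one controls through $\nabla a=\partial_xu$.
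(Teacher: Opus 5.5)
For \eqref{eq:gamma-dyadic-en} your argument works, and its skeleton is the paper's. You Bony-decompose $u\,\partial_xu$, bound the low-frequency copy of $u$ in $L^\infty$, extract $2^{k-\ell}$ from $P_\ell u\approx 2^{-\ell}P_\ell\nabla u$ plus Bernstein on the output, and close with $\sum_{\ell\ge k-10}2^{k-\ell}\upsilon_\ell\lesssim\upsilon_k$. The split $u=Q+(u-Q)$ is cosmetic, since $P_\ell$ annihilates constants.

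Where you depart from the paper, you are right to. The paper's proof simply asserts $\norm{P_\ell\partial_xu}_{L^\infty_tL^2_x}\lesssim\beta_\ell$, but $\beta_\ell$ only sees the slice $t=t_0$, so that step is unjustified. Some propagation input, such as Proposition \ref{prop:heat-en} at $s=0$, is genuinely needed for the $L^\infty_tL^2_x$ half of \eqref{eq:gamma-dyadic-en}. This costs the hypothesis \eqref{eq:small-W-en}, which is harmless because it is in force wherever the lemma is used.

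The gap is in \eqref{eq:gamma-envelope-en} for $\sigma>0$. In the high--low piece of your $L^4_{t,x}$ estimate you put $P_\ell(u-Q)$ in $L^\infty$ through $\norm{P_\ell\partial_xu}_{L^\infty_tL^2_x}$. You put the other factor in $L^4$ through the non-localized quantity $\norm{g}_{L^4_{t,x}}$. So even your $L^4$ bound consumes propagated $L^\infty_tL^2_x$ control of $W_x$. Repeating this with weights needs $2^{\sigma\ell}\norm{P_\ell W_x}_{L^\infty_tL^2_x}\lesssim 2^{\delta|\ell-k|}\upsilon_k(\sigma)$. That does not follow from the unweighted bound of Proposition \ref{prop:heat-en}, because $2^{\sigma\ell}\upsilon_\ell$ is not controlled by $\upsilon_\ell(\sigma)$: in $2^{\sigma\ell}\upsilon_\ell$, frequencies $j\ll\ell$ carry weight $2^{\sigma\ell}$ instead of $2^{\sigma j}$. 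The weighted bound you need is Proposition \ref{prop:main-freq-en} at level $\sigma$, which is proved only in Section 4 and relies on this lemma.

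The fix is the H\"older split behind the paper's bound $\sum_{\ell\ge k-10}2^{k-\ell}\norm{P_\ell\partial_xu}_{L^4}$:
\begin{enumerate}
\item put the high-frequency factor in $L^4$, using $\norm{P_\ell(u-Q)}_{L^4}\lesssim 2^{-\ell}\norm{P_\ell\partial_xu}_{L^4}$;
\item put the low-frequency factor in $L^\infty$, using $\norm{P_{<\ell-4}\partial_xu}_{L^\infty}\lesssim 2^{\ell}E_0^{1/2}$;
\item in the resonant part, use output Bernstein with $\norm{P_\ell(u-Q)}_{L^\infty_tL^2_x}\lesssim 2^{-\ell}E_0^{1/2}$.
\end{enumerate}
This bounds the $L^4$ part by $\alpha_k(\sigma)$ for every $\sigma\ge0$, using only the energy. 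It also keeps the $L^4$ part independent of Proposition \ref{prop:heat-en}. That matters because the paper means to close that proposition's bilinear hypothesis \eqref{eq:heat-bootstrap-bilin-en} through \eqref{eq:bilinearGamma-en}, that is, through this lemma.

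Since $\gamma_k(\sigma)$ is built like \eqref{eq:ak-en}--\eqref{eq:betak-en}, its remaining part is a fixed-time Bony estimate at $t_0$, bounded by $\beta_k(\sigma)$. With this split, propagation is needed only for the $\sigma=0$ $L^\infty_tL^2_x$ term in \eqref{eq:gamma-dyadic-en}.
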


\begin{proof}
It suffices to treat one spatial direction $j$. From
\[
\Gamma_j=u\otimes(\partial_ju)^\top-(\partial_ju)\otimes u^\top,
\]
each matrix component of $P_k\Gamma_j$ is the frequency-$2^k$ portion of a product of $u$ and $\partial_j u$. Applying Bony decomposition,
\[
P_k(u\,\partial_ju)
=
u_{<k-10}\,P_k\partial_ju
+P_k\bigl((u-u_{<k-10})\partial_ju\bigr).
\]
The low-frequency factor in the first term is uniformly bounded in $L_{t,x}^\infty$, hence
\[
\norm{u_{<k-10}\,P_k\partial_ju}_{L_{t,x}^4}
+
\norm{u_{<k-10}\,P_k\partial_ju}_{L_t^\infty L_x^2}
\lesssim
\norm{P_k\partial_ju}_{L_{t,x}^4}
+
\norm{P_k\partial_ju}_{L_t^\infty L_x^2}.
\]

For the second term, expand $u-u_{<k-10}=\sum_{\ell\ge k-10}P_\ell u$. Using
\[
P_\ell u=2^{-\ell}\nabla^{-1}P_\ell\nabla u
\]
and Bernstein gives
\[
\norm{P_k\bigl((u-u_{<k-10})\partial_ju\bigr)}_{L_{t,x}^4}
\lesssim
\sum_{\ell\ge k-10}2^{k-\ell}\norm{P_\ell\partial_xu}_{L_{t,x}^4},
\]
and
\[
\norm{P_k\bigl((u-u_{<k-10})\partial_ju\bigr)}_{L_t^\infty L_x^2}
\lesssim
\sum_{\ell\ge k-10}2^{k-\ell}\norm{P_\ell\partial_xu}_{L_t^\infty L_x^2}.
\]
Including the transposed term, we obtain
\begin{align}
\norm{P_k\Gamma_x}_{L_{t,x}^4}
&\lesssim
\norm{P_k\partial_xu}_{L_{t,x}^4}
+\sum_{\ell\ge k-10}2^{k-\ell}\norm{P_\ell\partial_xu}_{L_{t,x}^4},
\label{eq:gamma-proof-L4-en}\\
\norm{P_k\Gamma_x}_{L_t^\infty L_x^2}
&\lesssim
\norm{P_k\partial_xu}_{L_t^\infty L_x^2}
+\sum_{\ell\ge k-10}2^{k-\ell}\norm{P_\ell\partial_xu}_{L_t^\infty L_x^2}.
\label{eq:gamma-proof-L2-en}
\end{align}

Since
\[
\partial_xu=\frac12(W_x+\overline{W_x}),
\]
we have
\[
\norm{P_\ell\partial_xu}_{L_{t,x}^4}\lesssim \alpha_\ell,
\qquad
\norm{P_\ell\partial_xu}_{L_t^\infty L_x^2}\lesssim \beta_\ell.
\]
Substituting these into \eqref{eq:gamma-proof-L4-en}--\eqref{eq:gamma-proof-L2-en} and using the slow-variation summations
\[
\sum_{\ell\ge k-10}2^{k-\ell}\alpha_\ell\lesssim \alpha_k,
\qquad
\sum_{\ell\ge k-10}2^{k-\ell}\beta_\ell\lesssim \beta_k,
\]
gives \eqref{eq:gamma-dyadic-en}. Taking the slow-varying envelope supremum as in \eqref{eq:bk-en}--\eqref{eq:betak-en} gives \eqref{eq:gamma-envelope-en}.
\end{proof}

\subsection{Two-body estimates and perturbation}

This subsection keeps the bilinear mechanism in the original-equation language. Instead of directly invoking a bundle-valued bilinear Strichartz estimate, we first rewrite the required structure as an explicit two-body system for the original equation, and then reduce the bilinear control needed for closing to an inhomogeneous two-body interaction proposition. Thus the genuinely analytic core is compressed to the free bilinear mechanism itself; compare with the bundle-valued version used by Dodson--Smith \cite{DodsonSmith,SmithCond}.

We first tensorize the two-body interaction. Fix $j,k\in\mathbb Z$ and heat parameters $s,\tilde s\ge0$, and set
\[
U_j^{(s)}:=P_jW_x(s),
\qquad
V_k^{(\tilde s)}:=P_kW_x(\tilde s).
\]
By \eqref{eq:ambient-W-en}, these fields satisfy
\begin{equation}\label{eq:Ueq-en}
(\I\partial_t+\Delta)U_j^{(s)} = F_j^{(s)},
\qquad
F_j^{(s)}:=P_jN\bigl(W_x(s),\Gamma_x(s)\bigr),
\end{equation}
and
\begin{equation}\label{eq:Veq-en}
(\I\partial_t+\Delta)V_k^{(\tilde s)} = G_k^{(\tilde s)},
\qquad
G_k^{(\tilde s)}:=P_kN\bigl(W_x(\tilde s),\Gamma_x(\tilde s)\bigr),
\end{equation}
where
\begin{equation}\label{eq:Nschematic-en}
N(W_x,\Gamma_x)
:= -2\Gamma_\ell\partial_\ell W_x - \Gamma_\ell^2W_x
+\frac{\I}{2}\,\Im\hpair{W_x}{W_\ell}\,W_\ell.
\end{equation}
Below, \eqref{eq:Nschematic-en} is used as the unified notation for heat-slice forcing.
Define the two-body tensor field
\begin{equation}\label{eq:Zjk-en}
Z_{j,k}^{(s,\tilde s)}(t,x,y)
:= U_j^{(s)}(t,x)\otimes \overline{V_k^{(\tilde s)}(t,y)}
\in \C^3\otimes\C^3.
\end{equation}
Then it satisfies an exact four-dimensional two-body equation.

\begin{lemma}[Tensor equation]\label{lem:tensorized-en}
The field $Z_{j,k}^{(s,\tilde s)}$ defined by \eqref{eq:Ueq-en}--\eqref{eq:Veq-en} satisfies
\begin{equation}\label{eq:tensorized-en}
(\I\partial_t+\Delta_x-\Delta_y)Z_{j,k}^{(s,\tilde s)}
= F_j^{(s)}\otimes \overline{V_k^{(\tilde s)}} - U_j^{(s)}\otimes \overline{G_k^{(\tilde s)}}.
\end{equation}
\end{lemma}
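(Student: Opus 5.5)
The identity in Lemma \ref{lem:tensorized-en} is a direct product-rule computation, so the plan is to differentiate the tensor $Z_{j,k}^{(s,\tilde s)}(t,x,y)=U_j^{(s)}(t,x)\otimes\overline{V_k^{(\tilde s)}(t,y)}$ term by term. The only inputs are the two heat-slice Schrödinger equations \eqref{eq:Ueq-en}--\eqref{eq:Veq-en}, which follow from \eqref{eq:ambient-W-en} applied at fixed heat times $s,\tilde s$, and the fact that $P_j$ commutes with $\I\partial_t+\Delta$.

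First, apply the Leibniz rule in $t$:
\[
\I\partial_t Z_{j,k}^{(s,\tilde s)}=(\I\partial_tU_j^{(s)})\otimes\overline{V_k^{(\tilde s)}}+U_j^{(s)}\otimes \I\partial_t\overline{V_k^{(\tilde s)}}.
\]
Since $\Delta_x$ acts only on the first factor and $\Delta_y$ only on the second,
\[
\Delta_xZ=\Delta U_j^{(s)}\otimes\overline{V_k^{(\tilde s)}},
\qquad
\Delta_yZ=U_j^{(s)}\otimes\Delta\overline{V_k^{(\tilde s)}}.
\]
Grouping terms gives
\[
(\I\partial_t+\Delta_x-\Delta_y)Z=\bigl((\I\partial_t+\Delta)U_j^{(s)}\bigr)\otimes\overline{V_k^{(\tilde s)}}-U_j^{(s)}\otimes\bigl((-\I\partial_t+\Delta)\overline{V_k^{(\tilde s)}}\bigr).
\]

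Second, identify the two brackets. The first bracket is $F_j^{(s)}$ by \eqref{eq:Ueq-en}. For the second, conjugate \eqref{eq:Veq-en}. Because $\Delta$ is a real operator and $\overline{\I\partial_t V}=-\I\partial_t\overline V$, this yields
\[
(-\I\partial_t+\Delta)\overline{V_k^{(\tilde s)}}=\overline{G_k^{(\tilde s)}}.
\]
Substituting both gives \eqref{eq:tensorized-en}. Here the complex conjugate is taken componentwise on $\C^3$, and the tensor product is bilinear, not sesquilinear, in its arguments, so conjugation passes through it.

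The only point needing care is regularity, so that these manipulations hold classically rather than only in distributions. For the smooth solutions considered, with $\dotH^3$ data, Lemma \ref{lem:heat-smooth-en} gives smoothness of $W_x(s)$ in $x$ for $s>0$, and the frequency localization makes all $x$-derivatives bounded. Time differentiability comes from the equation itself. If needed, one can justify the computation in the distributional sense on $I\times\R^2\times\R^2$ by pairing with test functions of product form, which are dense. I do not expect any genuine obstacle here; the main content of the two-body mechanism will come later, when this equation is combined with an interaction Morawetz or bilinear argument.
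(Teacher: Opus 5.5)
Your argument is correct and is the paper's proof written out in more detail: the Leibniz rule in $t$, $\Delta_x$ and $\Delta_y$ acting on separate factors, and conjugation of the $V$-equation to get $(-\I\partial_t+\Delta)\overline{V_k^{(\tilde s)}}=\overline{G_k^{(\tilde s)}}$. One caution: the computation is purely formal (it holds with $F:=(\I\partial_t+\Delta)U$, $G:=(\I\partial_t+\Delta)V$ for any regular $U,V$), so it neither needs nor supports your side remark that the heat-slice equations with forcing $P_jN(W_x(s),\Gamma_x(s))$ follow from \eqref{eq:ambient-W-en} at fixed $s>0$. The paper asserts this too, but it is not automatic, because the heat-extended map $u(s,t,\cdot)$ does not in general solve \eqref{eq:SM-en} in $t$ (the harmonic map heat flow and the Schrödinger map flow do not commute).
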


\begin{proof}
Differentiate \eqref{eq:Zjk-en} directly:
\[
\I\partial_t Z
= (\I\partial_tU_j^{(s)})\otimes\overline{V_k^{(\tilde s)}}
-U_j^{(s)}\otimes\overline{\I\partial_tV_k^{(\tilde s)}}.
\]
Substituting \eqref{eq:Ueq-en}--\eqref{eq:Veq-en}, and noting the sign change of $\Delta$ after conjugation, gives \eqref{eq:tensorized-en}.
\end{proof}

Lemma \ref{lem:tensorized-en} has two roles. First, it reorganizes the product of two dyadic tangential fields into a four-dimensional linear equation. Second, it makes the later interaction functional depend only on the free propagation of $Z_{j,k}$ and on its forcing terms, so that one no longer needs to introduce a bundle-valued bilinear estimate as an isolated atomic black box.

We now prove the free bilinear estimate.

\begin{lemma}[Free bilinear estimate]\label{lem:free-bilin-en}
Let $f_j,g_k\in L_x^2(\R^2;\C^3)$ have Fourier supports contained in $\Xi_j$ and $\Xi_k$, respectively. Then
\begin{equation}\label{eq:free-bilin-en}
\norm{e^{it\Delta}f_j\; e^{it\Delta}g_k}_{L_{t,x}^2(\R\times\R^2)}
\lesssim 2^{-\frac12|j-k|}\norm{f_j}_{L_x^2}\norm{g_k}_{L_x^2}.
\end{equation}
The same estimate holds componentwise for $\C^3$-valued free solutions.
\end{lemma}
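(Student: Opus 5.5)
We first reduce the $\C^3$-valued statement to the scalar one. Each component of $e^{\I t\Delta}f_j\,e^{\I t\Delta}g_k$, whether an entry of the tensor product or of the pairing, is a finite sum of products of scalar free solutions. The scalar estimate with Cauchy--Schwarz over the finitely many components then gives the vector estimate, with $\norm{f_j}_{L_x^2}$ the Euclidean sum over components.

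The scalar estimate splits into two regimes.

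\emph{Comparable frequencies.} When $|j-k|\le 3$, no gain is needed. H\"older gives
\[
\norm{e^{\I t\Delta}f_j\,e^{\I t\Delta}g_k}_{L^2_{t,x}}\le\norm{e^{\I t\Delta}f_j}_{L^4_{t,x}}\norm{e^{\I t\Delta}g_k}_{L^4_{t,x}},
\]
and \eqref{eq:freeL4-en} of Lemma \ref{lem:free-linear-en} with $I=\R$ bounds each factor. The $L^4$ Strichartz bound is uniform in $I$, so it holds on $\R$.

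\emph{Separated frequencies.} By symmetry we may take $j\le k-4$. Using the scaling $(t,x)\mapsto(2^{-2k}t,2^{-k}x)$, which preserves both sides of \eqref{eq:free-bilin-en}, we normalize to $k=0$ and $j=-m$ with $m\ge 4$; the goal becomes the factor $2^{-m/2}$. Write the space-time Fourier transform of the product as
\[
\widehat{uv}(\tau,\xi)=\int_{\R^2}\widehat f(\xi_1)\,\widehat g(\xi-\xi_1)\,\delta\bigl(\tau+|\xi_1|^2+|\xi-\xi_1|^2\bigr)\,d\xi_1 ,
\]
where $u=e^{\I t\Delta}f_j$ and $v=e^{\I t\Delta}g_k$; the sign convention is irrelevant. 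For fixed $(\tau,\xi)$, Cauchy--Schwarz against the measure $\delta(\cdots)\,d\xi_1$ gives
\[
|\widehat{uv}(\tau,\xi)|^2\le \mu(\tau,\xi)\int|\widehat f(\xi_1)|^2|\widehat g(\xi-\xi_1)|^2\,\delta(\cdots)\,d\xi_1 ,
\]
where $\mu(\tau,\xi)$ is the $\delta$-measure of the curve
\[
\Sigma_{\tau,\xi}=\bigl\{\xi_1\in\Xi_{-m}:|\xi_1|^2+|\xi-\xi_1|^2=-\tau\bigr\}.
\]
Integrating in $(\tau,\xi)$ and undoing the $\delta$ by Fubini returns $\norm{f}_{L^2}^2\norm{g}_{L^2}^2$. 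By Plancherel it therefore suffices to show $\sup_{\tau,\xi}\mu(\tau,\xi)\lesssim 2^{-m}$.

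The main step is this measure bound, which I would do through the coarea formula. The gradient of the phase in $\xi_1$ is $2(2\xi_1-\xi)=2(\xi_1-\xi_2)$ with $\xi_2=\xi-\xi_1\in\Xi_0$. Since $|\xi_1|\le 2^{-m+1}\ll|\xi_2|\sim1$, this gradient has size $\sim1$. Hence $\mu(\tau,\xi)$ is at most a constant times the one-dimensional length of $\Sigma_{\tau,\xi}$. That set is an arc of a circle, the level set of a quadratic in $\xi_1$, contained in a disc of radius $\sim 2^{-m}$, so its length is $\lesssim 2^{-m}$. This gives $\mu\lesssim 2^{-m}$ uniformly, and hence the bound $2^{-m/2}=2^{-\frac12|j-k|}$ after taking square roots.

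To avoid manipulating the $\delta$-function formally, I would instead write the product as $\int e^{\I(x\cdot\xi-t\phi)}\,\widehat f(\xi_1)\widehat g(\xi_2)\,d\xi_1d\xi_2$ with $\phi=|\xi_1|^2+|\xi_2|^2$. Then change variables $(\xi_1,\xi_2)\mapsto(\xi,\phi,\text{arclength})$, whose Jacobian is $\sim1$ by the gradient computation above, and apply Plancherel in $(t,x)$ followed by Cauchy--Schwarz along the arc of length $\lesssim 2^{-m}$. Rescaling back gives \eqref{eq:free-bilin-en}.
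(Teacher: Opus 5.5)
Your proposal is correct and follows essentially the paper's argument: H\"older plus the $L^4_{t,x}$ Strichartz bound for comparable frequencies, and for separated frequencies Cauchy--Schwarz against the $\delta$-measure with a coarea bound (gradient of size the larger frequency, arc length at most the smaller frequency). The only differences are your scaling normalization and integrating over the low- rather than the high-frequency variable. One small precision: the set $\Sigma_{\tau,\xi}$ in your Cauchy--Schwarz weight should also carry the constraint $\xi-\xi_1\in\Xi_0$, as the paper's $K_{j,k}$ does, since your gradient lower bound uses it; without it, for $|\xi|\lesssim 2^{-m}$, the weight $\mu(\tau,\xi)$ is of size $1$ rather than $O(2^{-m})$.
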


\begin{proof}
We first prove the scalar case. By symmetry it is enough to consider $j\ge k$. If $|j-k|\le4$, then H\"older and the $L^4_{t,x}$ Strichartz estimate in Lemma \ref{lem:free-linear-en} give
\[
\norm{e^{it\Delta}f_j\,e^{it\Delta}g_k}_{L^2_{t,x}}
\le \norm{e^{it\Delta}f_j}_{L^4_{t,x}}\norm{e^{it\Delta}g_k}_{L^4_{t,x}}
\lesssim \norm{f_j}_{L_x^2}\norm{g_k}_{L_x^2}.
\]
Since $2^{-\frac12|j-k|}\sim1$ in this case, \eqref{eq:free-bilin-en} follows.

Assume now that $j\ge k+5$. Set
\[
\nu_j(t,x):=e^{it\Delta}f_j(x),
\qquad
\omega_k(t,x):=e^{it\Delta}g_k(x).
\]
With the space-time Fourier transform convention
\[
\widetilde h(\tau,\xi)=\int_{\R}\int_{\R^2}e^{-it\tau}e^{-ix\cdot\xi}h(t,x)\,dx\,dt,
\]
we have
\[
\widetilde{\nu_j}(\tau,\xi)=\delta(\tau-|\xi|^2)\widehat f_j(\xi),
\qquad
\widetilde{\omega_k}(\tau,\xi)=\delta(\tau-|\xi|^2)\widehat g_k(\xi).
\]
Therefore
\begin{equation}\label{eq:bilin-conv-en}
\widetilde{\nu_j\omega_k}(\tau,\xi)
=\int_{\R^2}\widehat f_j(\eta)\widehat g_k(\xi-\eta)
\,\delta\bigl(\tau-|\eta|^2-|\xi-\eta|^2\bigr)\,d\eta.
\end{equation}
Applying Cauchy--Schwarz to the right-hand side of \eqref{eq:bilin-conv-en} gives
\[
|\widetilde{\nu_j\omega_k}(\tau,\xi)|^2
\le K_{j,k}(\tau,\xi)
\int_{\R^2}|\widehat f_j(\eta)|^2|\widehat g_k(\xi-\eta)|^2
\delta\bigl(\tau-|\eta|^2-|\xi-\eta|^2\bigr)\,d\eta,
\]
where
\[
K_{j,k}(\tau,\xi)
:=\int_{\R^2}\mathbf 1_{\Xi_j}(\eta)\mathbf 1_{\Xi_k}(\xi-\eta)
\delta\bigl(\tau-|\eta|^2-|\xi-\eta|^2\bigr)\,d\eta.
\]
It remains to prove
\begin{equation}\label{eq:K-bound-en}
K_{j,k}(\tau,\xi)\lesssim 2^{k-j}.
\end{equation}
For fixed $(\tau,\xi)$, set
\[
\Phi_\xi(\eta):=|\eta|^2+|\xi-\eta|^2
=2\Bigl|\eta-\frac\xi2\Bigr|^2+\frac{|\xi|^2}{2}.
\]
The level set $\Phi_\xi(\eta)=\tau$ is a circle centered at $\xi/2$. On the support
\[
\eta\in\Xi_j,
\qquad
\xi-\eta\in\Xi_k,
\]
and since $j\ge k+5$,
\[
|\nabla_\eta\Phi_\xi(\eta)|=2|2\eta-\xi|=2|\eta-(\xi-\eta)|
\ge 2\bigl||\eta|-|\xi-\eta|\bigr|
\gtrsim 2^j.
\]
Also, $\xi-\eta\in\Xi_k$ restricts $\eta$ to an annulus centered at $\xi$ of radius and thickness $\sim2^k$; hence the intersection of the level circle with this annulus has length at most $C2^k$. By the coarea formula,
\[
K_{j,k}(\tau,\xi)
\le\int_{\Phi_\xi=\tau}\frac{\mathbf 1_{\Xi_j}(\eta)\mathbf 1_{\Xi_k}(\xi-\eta)}{|\nabla_\eta\Phi_\xi(\eta)|}\,d\sigma(\eta)
\lesssim 2^{-j}2^k,
\]
which proves \eqref{eq:K-bound-en}.

By Plancherel and Fubini,
\begin{align*}
\norm{\nu_j\omega_k}_{L^2_{t,x}}^2
&=\norm{\widetilde{\nu_j\omega_k}}_{L^2_{\tau,\xi}}^2\\
&\lesssim 2^{k-j}
\int_{\R}\int_{\R^2}\int_{\R^2}
|\widehat f_j(\eta)|^2|\widehat g_k(\xi-\eta)|^2
\delta\bigl(\tau-|\eta|^2-|\xi-\eta|^2\bigr)
\,d\eta\,d\xi\,d\tau\\
&=2^{k-j}\int_{\R^2}\int_{\R^2}|\widehat f_j(\eta)|^2|\widehat g_k(\zeta)|^2\,d\eta\,d\zeta\\
&=2^{k-j}\norm{f_j}_{L_x^2}^2\norm{g_k}_{L_x^2}^2.
\end{align*}
Taking square roots gives the desired estimate. The case $k>j$ follows by symmetry. For $\C^3$-valued functions, apply the scalar estimate to each component and then sum by Cauchy--Schwarz.
\end{proof}

Lemma \ref{lem:free-bilin-en} is the homogeneous bilinear input used next to obtain an $X_j\times X_k$ estimate.

\begin{lemma}[Bilinear lifting]\label{lem:X-bilinear-en}
Let $U\in X_j(I)$ and $V\in X_k(I)$ be spatially frequency-localized to $\Xi_j$ and $\Xi_k$, respectively. Then
\begin{equation}\label{eq:X-bilinear-en}
\norm{UV}_{L_{t,x}^2(I\times\R^2)}
\lesssim 2^{-\frac12|j-k|}\norm{U}_{X_j(I)}\norm{V}_{X_k(I)}.
\end{equation}
\end{lemma}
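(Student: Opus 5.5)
The plan is to avoid any $TT^*$ or transference argument: $X_j$ is not an atomic space built from free waves, so Lemma \ref{lem:free-bilin-en} cannot simply be transferred. Instead I would read the bilinear gain directly off two components of the $X$-norms, namely the local-smoothing component of the high-frequency factor and the Galilean maximal-function component of the low-frequency factor. This is the mechanism behind Lemma \ref{lem:free-bilin-en}, recast in the lateral mixed norms. The first step is the comparable case $|j-k|\le 80$. Here $2^{-\frac12|j-k|}\sim 1$, so H\"older gives
\[
\norm{UV}_{L^2_{t,x}}\le\norm{U}_{L^4_{t,x}}\norm{V}_{L^4_{t,x}}\le\norm{U}_{X_j}\norm{V}_{X_k}.
\]
By symmetry (the statement is symmetric, and conjugation does not affect $L^2_{t,x}$), it then suffices to treat $j\ge k+80$.

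For $j\ge k+80$, I would first decompose $U=\sum_{|j'-j|\le 20,\ \theta}P_{j',\theta}U$ over the fixed finite-overlap family of sectorial projections. The number of pieces is $O(1)$, so it suffices to bound $\norm{(P_{j',\theta}U)V}_{L^2}$ for a single sector with direction $\theta$. Next, take an (almost optimal) decomposition $V=\sum_{\lambda\in W_{k+40}}V_\lambda$ realizing the sum-space norm $\norm{V}_{L^{2,\infty}_{\theta,W_{k+40}}}$ in the \emph{same} direction $\theta$. This is legitimate because $X_k$ takes the supremum over $\theta$. For each $\lambda$, the Galilean transform $T_{\lambda\theta}$ acts on both factors by the same spatial shift $x\mapsto x+t\lambda\theta$ and by unimodular phases. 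Hence
\[
|(P_{j',\theta}U)V_\lambda|=|T_{\lambda\theta}P_{j',\theta}U|\,|T_{\lambda\theta}V_\lambda|
\]
pointwise after the change of variables, and the mixed-norm H\"older inequality $L^{\infty,2}_\theta\times L^{2,\infty}_\theta\to L^{2,2}_\theta=L^2_{t,x}$ gives
\[
\norm{(P_{j',\theta}U)V_\lambda}_{L^2_{t,x}}\le\norm{P_{j',\theta}U}_{L^{\infty,2}_{\theta,\lambda}}\norm{V_\lambda}_{L^{2,\infty}_{\theta,\lambda}}.
\]

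The key compatibility check is that every $\lambda\in W_{k+40}$ satisfies $|\lambda|\le 2^{k+40}<2^{j-40}$, which holds precisely because $j\ge k+80$. Therefore the local-smoothing part of $\norm{U}_{X_j}$ bounds $\norm{P_{j',\theta}U}_{L^{\infty,2}_{\theta,\lambda}}\le 2^{-j/2}\norm{U}_{X_j}$ uniformly in $\lambda$. Summing over $\lambda$ then yields
\[
\norm{(P_{j',\theta}U)V}_{L^2}\lesssim 2^{-j/2}\norm{U}_{X_j}\sum_\lambda\norm{V_\lambda}_{L^{2,\infty}_{\theta,\lambda}}\lesssim 2^{-j/2}\norm{U}_{X_j}\,2^{k/2}\norm{V}_{X_k},
\]
using the $2^{-k/2}$ weight on the maximal component of $X_k^0$. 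Summing the $O(1)$ sectors gives the factor $2^{-\frac12(j-k)}$.

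The main obstacle I expect is the bookkeeping of the sum-space $L^{2,\infty}_{\theta,W_{k+40}}$. One must use the fact that its norm is an infimum over decompositions $V=\sum_\lambda V_\lambda$, and that the pieces $V_\lambda$ need not be frequency localized, so the product $(P_{j',\theta}U)V_\lambda$ is estimated without any further projection. One also needs the local-smoothing bound for $U$ to hold uniformly in $\lambda$, so that the triangle inequality in $\lambda$ costs nothing. A second subtle point is that the sector direction $\theta$ of $U$ and the direction of $V$'s maximal norm must coincide; the supremum over $\theta$ in the definition of $X_k$ handles this. The $\C^3$-valued case follows componentwise, as in Lemma \ref{lem:free-bilin-en}.
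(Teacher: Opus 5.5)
Your proposal is correct and follows the paper's route: decompose the high-frequency factor into sectorial pieces $P_{j',\theta}U$, then apply lateral H\"older to pair its local-smoothing norm with the Galilean maximal norm of the low-frequency factor, yielding $2^{-j/2}\cdot 2^{k/2}$. Your bookkeeping is more careful than the paper's, which pairs the $\lambda=0$ norm $L^{\infty,2}_{\theta_\nu,0}$ directly with the sum space $L^{2,\infty}_{\theta_\nu,W_{k+40}}$ and assumes only $j\ge k+11$. Splitting $V=\sum_\lambda V_\lambda$ and using the local-smoothing bound uniformly over $|\lambda|<2^{j-40}$ is what makes that H\"older step valid; strictly this needs $j\ge k+81$, with $j=k+80$ already handled by your comparable case.
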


\begin{proof}
Estimate \eqref{eq:X-bilinear-en} follows by combining the near-frequency case with the free bilinear estimate. If $|j-k|\le10$, H\"older and the $L^4_{t,x}$ component of the $X_k$ norm give
\[
\norm{UV}_{L_{t,x}^2}
\le \norm{U}_{L_{t,x}^4}\norm{V}_{L_{t,x}^4}
\le \norm{U}_{X_j}\norm{V}_{X_k},
\]
and $2^{-\frac12|j-k|}\sim1$.

Now assume $j\ge k+11$. Choose a standard finite-overlap angular decomposition
\[
U=\sum_\nu U^\nu,
\qquad
U^\nu:=P_{j,\theta_\nu}U,
\]
where each $U^\nu$ has Fourier support at scale $\sim2^j$ and direction near $\theta_\nu$. Almost orthogonality gives
\[
\norm{UV}_{L_{t,x}^2}^2\lesssim\sum_\nu\norm{U^\nu V}_{L_{t,x}^2}^2.
\]
In lateral coordinates along $\theta_\nu$, H\"older gives
\[
\norm{U^\nu V}_{L_{t,x}^2}
\le \norm{U^\nu}_{L_{\theta_\nu,0}^{\infty,2}}
   \norm{V}_{L_{\theta_\nu,W_{k+40}}^{2,\infty}}.
\]
The definitions of $X_j$ and $X_k$ imply
\[
\norm{U^\nu}_{L_{\theta_\nu,0}^{\infty,2}}
\lesssim 2^{-j/2}\norm{U}_{X_j},
\qquad
\norm{V}_{L_{\theta_\nu,W_{k+40}}^{2,\infty}}
\lesssim 2^{k/2}\norm{V}_{X_k}.
\]
Thus
\[
\norm{U^\nu V}_{L_{t,x}^2}
\lesssim 2^{-\frac12(j-k)}\norm{U}_{X_j}\norm{V}_{X_k}.
\]
Summing over $\nu$ gives the result. If $k\ge j+11$, exchange the roles of $U$ and $V$.
\end{proof}

\begin{proposition}[Inhomogeneous two-body estimate]\label{prop:two-body-en}
Let $U\in X_j(I)$ and $V\in X_k(I)$ have spatial frequencies localized in $\Xi_j$ and $\Xi_k$, and suppose
\[
(\I\partial_t+\Delta)U=F,
\qquad
(\I\partial_t+\Delta)V=G
\]
on $I=[t_0,t_1]$, with $F\in Y_j(I)$ and $G\in Y_k(I)$. Then
\begin{equation}\label{eq:two-body-en}
\norm{UV}_{L_{t,x}^2(I\times\R^2)}
\lesssim 2^{-\frac12|j-k|}
\Bigl(\norm{U(t_0)}_{L_x^2}+\norm{F}_{Y_j(I)}\Bigr)
\Bigl(\norm{V(t_0)}_{L_x^2}+\norm{G}_{Y_k(I)}\Bigr).
\end{equation}
\end{proposition}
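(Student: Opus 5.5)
The natural route is to combine the two results just proved. First, Proposition \ref{prop:linear-en} converts the data $(U(t_0),F)$ and $(V(t_0),G)$ into $X_j$ and $X_k$ control:
\[
\norm{U}_{X_j(I)}\lesssim \norm{U(t_0)}_{L_x^2}+\norm{F}_{Y_j(I)},
\qquad
\norm{V}_{X_k(I)}\lesssim \norm{V(t_0)}_{L_x^2}+\norm{G}_{Y_k(I)}.
\]
Second, Lemma \ref{lem:X-bilinear-en} bounds $\norm{UV}_{L^2_{t,x}}$ by $2^{-\frac12|j-k|}\norm{U}_{X_j}\norm{V}_{X_k}$. Multiplying the two linear bounds would then give \eqref{eq:two-body-en}. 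Proposition \ref{prop:linear-en} requires $I\subset(0,2^{2K}]$. By the scaling and time-translation invariance already used in Lemma \ref{lem:free-linear-en}, I would first reduce to that case, and check that the weights $2^{\pm k/2}$ and $2^{\pm k/6}$ in $X_k,Y_k$ transform consistently. In that reduction the factor $2^{-\frac12|j-k|}$ is scale invariant.

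Because the paper aims to keep the bilinear mechanism visible through the two-body equation, I would also present a direct version using Lemma \ref{lem:tensorized-en}. Write $Z=U\otimes\overline{V}$. Strictly, the tensor lemma is stated for $U\otimes\overline V$, while the estimate concerns $UV$. Since $\overline{V}$ has frequency support in $-\Xi_k=\Xi_k$ and solves the conjugate equation, the two versions are equivalent. The free bilinear estimate, Lemma \ref{lem:free-bilin-en}, is insensitive to conjugating one factor: its convolution argument works with $\tau-|\eta|^2+|\xi-\eta|^2$, and the level sets are lines rather than circles. The transversality gradient is still $\gtrsim 2^{j}$ when $j\ge k+5$.

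Duhamel for $Z$ then splits $UV$ into four pieces.
\begin{itemize}
\item[(a)] free$\times$free: handled by Lemma \ref{lem:free-bilin-en}.
\item[(b)] free$\times$Duhamel and (c) Duhamel$\times$free.
\item[(d)] Duhamel$\times$Duhamel.
\end{itemize}
For (b)–(d), each Duhamel factor $\int_{t_0}^t e^{\I(t-s)\Delta}F(s)\,ds$ is expanded over the four atoms of $Y_j$. For the $L^{4/3}_{t,x}$ and lateral $L^{3/2,6/5}$ atoms, the Christ–Kiselev lemma together with the dual estimates \eqref{eq:freeL4-en}–\eqref{eq:freelateral-en} reduces the truncated integral to a superposition of free waves. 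One then applies Lemma \ref{lem:free-bilin-en} inside the time integral.

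The main obstacle is the fourth atom, $2^{-j/2}\norm{f_4}_{L^{1,2}_{\theta,W_{j-40}}}$. This is the endpoint local-smoothing dual, for which Christ–Kiselev fails: the exponents are $1$ and $2$, not $p<q$. I would not truncate in time here. Instead I would use the atomic structure directly. An $L^{1,2}_\theta$ atom is an $L^1$ superposition in $x_1$ of sources on the lines $\{x\cdot\theta=x_1\}$, and each such source generates, by the local-smoothing dual, a solution that is free on either side of the line. Summing with the $L^1_{x_1}$ weight then reduces this case to one-sided free solutions, which Lemma \ref{lem:X-bilinear-en} already handles via its $L^{\infty,2}\times L^{2,\infty}$ Hölder step. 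If this becomes delicate, I would fall back on the first route, which only needs Proposition \ref{prop:linear-en} as stated, and present it as the proof.
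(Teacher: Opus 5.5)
Your first route is the paper's proof. The paper writes $U$ and $V$ as free plus Duhamel parts, bounds each in $X_j(I)$ and $X_k(I)$ by Proposition~\ref{prop:linear-en}, and applies Lemma~\ref{lem:X-bilinear-en} to the four resulting products, which is equivalent to your single application to $U,V$; it uses neither Lemma~\ref{lem:tensorized-en} nor any Christ--Kiselev argument, so your second route is unnecessary.

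The paper passes over the hypothesis $I\subset(0,2^{2K}]$ without comment, but scaling cannot reduce to it. Rescaling by $2^m$ carries the grids $W_k$ and the length bound $2^{2K}$ to those with $K$ replaced by $K-m$, so only time translation, plus a choice of $K$ with $2^{2K}\ge|I|$, is available.
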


\begin{proof}
Write both solutions as a free part plus a Duhamel part:
\[
U(t)=e^{\I(t-t_0)\Delta}U(t_0)-\I\int_{t_0}^te^{\I(t-s)\Delta}F(s)\,ds,
\]
\[
V(t)=e^{\I(t-t_0)\Delta}V(t_0)-\I\int_{t_0}^te^{\I(t-s)\Delta}G(s)\,ds.
\]
Set
\[
\mathcal T_jF(t):=-\I\int_{t_0}^te^{\I(t-s)\Delta}F(s)\,ds,
\qquad
\mathcal T_kG(t):=-\I\int_{t_0}^te^{\I(t-s)\Delta}G(s)\,ds.
\]
By the main linear estimate \eqref{eq:linear-en},
\[
\norm{\mathcal T_jF}_{X_j(I)}\lesssim\norm{F}_{Y_j(I)},
\qquad
\norm{\mathcal T_kG}_{X_k(I)}\lesssim\norm{G}_{Y_k(I)},
\]
and the free parts satisfy
\[
\norm{e^{\I(t-t_0)\Delta}U(t_0)}_{X_j(I)}\lesssim\norm{U(t_0)}_{L_x^2},
\qquad
\norm{e^{\I(t-t_0)\Delta}V(t_0)}_{X_k(I)}\lesssim\norm{V(t_0)}_{L_x^2}.
\]
Expanding the product into four terms and applying Lemma \ref{lem:X-bilinear-en} to each term gives \eqref{eq:two-body-en}.
\end{proof}

\begin{remark}
Estimate \eqref{eq:two-body-en} is the two-body estimate actually used later. It records explicitly where the gain $2^{-\frac12|j-k|}$ comes from and incorporates the forcing terms into the right-hand side. Thus in closing the transport term we no longer need to call an isolated bundle-valued bilinear theorem prepared along the heat flow; the required information is compressed into the inhomogeneous two-body interaction estimate \eqref{eq:two-body-en}.
\end{remark}

Applying Proposition \ref{prop:two-body-en} to the heat-flow slices $U_j^{(s)}$ and $V_k^{(\tilde s)}$ gives the form used below.

\begin{proposition}[Heat-flow two-body estimate]\label{prop:two-body-heat-en}
Set
\[
\mathfrak m_{j,k}(s,\tilde s)
:=\Bigl(\norm{U_j^{(s)}(t_0)}_{L_x^2}+\norm{F_j^{(s)}}_{Y_j(I)}\Bigr)
\Bigl(\norm{V_k^{(\tilde s)}(t_0)}_{L_x^2}+\norm{G_k^{(\tilde s)}}_{Y_k(I)}\Bigr).
\]
Then
\begin{equation}\label{eq:two-body-heat-en}
\norm{U_j^{(s)}V_k^{(\tilde s)}}_{L_{t,x}^2(I\times\R^2)}
\lesssim 2^{-\frac12|j-k|}\mathfrak m_{j,k}(s,\tilde s).
\end{equation}
\end{proposition}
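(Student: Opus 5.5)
This estimate is a direct specialization of Proposition \ref{prop:two-body-en}, so the plan is to check that its hypotheses hold for the heat-flow slices and then read off the conclusion. Fix $j,k\in\mathbb Z$ and heat times $s,\tilde s\ge0$, and set $U:=U_j^{(s)}=P_jW_x(s)$ and $V:=V_k^{(\tilde s)}=P_kW_x(\tilde s)$.

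\textbf{Step 1: localization and equations.} First verify that $U$ and $V$ are spatially frequency-localized in $\Xi_j$ and $\Xi_k$. This holds because $P_j$ is the standard Littlewood--Paley multiplier with annular support $|\xi|\sim 2^j$. If the symbol of $P_j$ is slightly wider than $\Xi_j$, one replaces $\Xi_j$ by a fattened annulus. The proofs of Lemmas \ref{lem:free-bilin-en} and \ref{lem:X-bilinear-en} are insensitive to this change: the coarea bound \eqref{eq:K-bound-en} only uses $|\eta|\sim2^j$ and $|\xi-\eta|\sim2^k$. Next, by \eqref{eq:Ueq-en}--\eqref{eq:Veq-en}, which are the ambient equation \eqref{eq:ambient-W-en} applied at fixed heat time and projected by $P_j$ and $P_k$, $U$ and $V$ solve free Schr\"odinger equations with forcings $F_j^{(s)}$ and $G_k^{(\tilde s)}$ on $I=[t_0,t_1]$.

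\textbf{Step 2: finiteness of the right-hand side.} The estimate is vacuous unless $\mathfrak m_{j,k}(s,\tilde s)<\infty$, so assume $F_j^{(s)}\in Y_j(I)$ and $G_k^{(\tilde s)}\in Y_k(I)$. We also need $U\in X_j(I)$ and $V\in X_k(I)$ as qualitative input. This follows from Proposition \ref{prop:linear-en}, since the initial traces lie in $L^2_x$ by energy bounds along the heat flow (Lemma \ref{lem:heat-smooth-en}). Proposition \ref{prop:linear-en} is stated for $I\subset(0,2^{2K}]$, and its constant must be uniform across the intervals used, so I would state explicitly that the same restriction on $I$ is in force here.

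\textbf{Step 3: apply Proposition \ref{prop:two-body-en}.} With $F=F_j^{(s)}$ and $G=G_k^{(\tilde s)}$, that proposition gives \eqref{eq:two-body-heat-en} with $\mathfrak m_{j,k}(s,\tilde s)$ exactly the product appearing in \eqref{eq:two-body-en}. The implicit constant is uniform in $s$, $\tilde s$, $j$, $k$, because the constant in Proposition \ref{prop:two-body-en} depends on none of these.

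The \textbf{main obstacle} is the one structural subtlety: $U$ and $V$ are taken at different heat times. Proposition \ref{prop:two-body-en}, however, only asks for two separate linear Schr\"odinger problems on the same time interval, with no relation between their coefficients. The two-body tensor formulation of Lemma \ref{lem:tensorized-en} makes this decoupling explicit. So nothing beyond bookkeeping is needed, and the genuine work — bounding $\mathfrak m_{j,k}$ by $\upsilon_j\upsilon_k w_j(s)w_k(\tilde s)$ — is deferred to its use in the bootstrap.
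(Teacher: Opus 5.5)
Your deduction is the paper's proof, which is the single line ``apply Proposition~\ref{prop:two-body-en} to \eqref{eq:Ueq-en}--\eqref{eq:Veq-en}.'' Your added bookkeeping is correct and only makes explicit what the paper leaves tacit: frequency localization, qualitative membership $U\in X_j(I)$ via Proposition~\ref{prop:linear-en}, the restriction $I\subset(0,2^{2K}]$, and the remark that the two heat times never interact.

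There is one genuine gap. It is inherited from the paper rather than introduced by you, and it sits in the input you justify in Step~1. You call \eqref{eq:Ueq-en}--\eqref{eq:Veq-en} ``the ambient equation \eqref{eq:ambient-W-en} applied at fixed heat time.'' But \eqref{eq:ambient-W-en}, including the cancellation \eqref{eq:cancel-en}, was derived from $\partial_tu=J_u\tau(u)$, and for $s>0$ the slice $t\mapsto u(s,t,\cdot)$ is not a Schr\"odinger map.

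To see this, set $\psi_t:=\partial_tu$, $\psi_s:=\partial_su$ and $\Psi:=\psi_t-J_u\psi_s$ along the heat flow. Then $\Psi|_{s=0}=0$. Using $\nabla_s\psi_t=\nabla_t\psi_s$, $\psi_s=\nabla_j\psi_j$, the curvature commutation and Lemma~\ref{lem:Jparallel-en}, one gets $(\nabla_s-\nabla_j\nabla_j)\Psi=R(\Psi,\psi_j)\psi_j+[J_u,A]\psi_s$, where $AX:=\sum_j\ip{X}{\psi_j}\psi_j$. The commutator vanishes only where $\psi_1,\psi_2$ are orthogonal of equal length, so in general $\Psi(s)\neq0$.

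Consequently $\nabla_t\psi_m=\nabla_m\psi_t$ acquires the extra term $\nabla_m\Psi$. The true forcing of $U_j^{(s)}$ is therefore $F_j^{(s)}$ plus $\Psi$-dependent terms: a first-order term coming from $(I+\I J_u)\nabla_m\Psi$, and a zeroth-order term coming from the failure of \eqref{eq:cancel-en}.

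Proposition~\ref{prop:two-body-en} is indifferent to what the forcing is. So the repair is to let $F_j^{(s)},G_k^{(\tilde s)}$ denote the true forcings, after which your argument goes through verbatim. The cost moves to \eqref{eq:forcing-slice-en}--\eqref{eq:forcing-slice-tilde-en}, which would then need bounds on $\Psi$ of the type proved for $\psi_t$ at positive heat time in the caloric-gauge literature. If you take \eqref{eq:Ueq-en}--\eqref{eq:Veq-en} as given, as the paper does, your proof is complete and identical to the paper's.
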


\begin{proof}
Apply Proposition \ref{prop:two-body-en} directly to \eqref{eq:Ueq-en}--\eqref{eq:Veq-en}.
\end{proof}

Proposition \ref{prop:two-body-heat-en} is the heat-flow version of \eqref{eq:two-body-en}. We next reduce $\mathfrak m_{j,k}$ to the heat-flow decay envelopes.

\begin{proposition}[Two-body envelope]\label{prop:two-body-envelope-en}
Under the small controlling-norm assumption \eqref{eq:small-W-en}, for all $j,k\in\mathbb Z$ and all $s,\tilde s\ge0$,
\begin{equation}\label{eq:two-body-envelope-en}
\mathfrak m_{j,k}(s,\tilde s)
\lesssim
\upsilon_j\upsilon_k(1+s2^{2j})^{-4}(1+\tilde s2^{2k})^{-4}.
\end{equation}
Consequently,
\begin{equation}\label{eq:bilinear-en}
\norm{(P_jW_x(s))(P_kW_x(\tilde s))}_{L_{t,x}^2(I\times\R^2)}
\lesssim
2^{-\frac12|j-k|}\upsilon_j\upsilon_k
(1+s2^{2j})^{-4}(1+\tilde s2^{2k})^{-4},
\end{equation}
and, by Lemma \ref{lem:gamma-dyadic-en},
\begin{equation}\label{eq:bilinearGamma-en}
\norm{(P_j\Gamma_x(s))(P_kW_x(\tilde s))}_{L_{t,x}^2(I\times\R^2)}
\lesssim
2^{-\frac12|j-k|}\upsilon_j\upsilon_k
(1+s2^{2j})^{-4}(1+\tilde s2^{2k})^{-4}.
\end{equation}
\end{proposition}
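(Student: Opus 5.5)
The plan is to prove the bound on $\mathfrak m_{j,k}$ first. The bilinear bounds \eqref{eq:bilinear-en} and \eqref{eq:bilinearGamma-en} then follow from Proposition \ref{prop:two-body-heat-en} and Lemma \ref{lem:gamma-dyadic-en}. Since $\mathfrak m_{j,k}$ factors as a product of a $j$-term and a $k$-term, it suffices to show, for each single dyadic heat slice,
\[
\norm{P_kW_x(s)(t_0)}_{L_x^2}+\norm{P_kN(W_x(s),\Gamma_x(s))}_{Y_k(I)}\lesssim \upsilon_k(1+s2^{2k})^{-4},
\]
uniformly in $s\ge0$ and $k$. Both pieces of this bound were in fact produced inside the proof of Proposition \ref{prop:heat-en}. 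The data part is \eqref{eq:heat-initial-slice-en}, which gives $\beta_kw_k(s)\le\upsilon_kw_k(s)$. The forcing part is \eqref{eq:heat-force-en}, which gives $\norm{P_kN(s)}_{Y_k}\lesssim_E\upsilon_kw_k(s)$. So the first step is to record these two estimates as consequences of the now-closed bootstrap, with the bootstrap hypotheses replaced by the conclusion \eqref{eq:heatdecay-en}.

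The delicate point is that \eqref{eq:heat-force-en} was derived using the bilinear bootstrap assumption \eqref{eq:heat-bootstrap-bilin-en} for the low-high transport piece, and that bilinear bound is essentially \eqref{eq:bilinearGamma-en} itself. To avoid circularity I would run one joint continuity argument in the heat variable, with both \eqref{eq:heat-bootstrap-en} and \eqref{eq:heat-bootstrap-bilin-en} as hypotheses with constant $M$. The steps are:
\begin{enumerate}
\item Under these hypotheses, the argument of Proposition \ref{prop:heat-en} gives the forcing bound with a constant $C_E$ independent of $M$, provided $\eps_0\ll M^{-1}$. The perturbative pieces \eqref{eq:heat-perturbative-force-en} and \eqref{eq:heat-transport-pert-en} carry $\eps_0^2M^2$. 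In the low-high piece the factor $\upsilon_\ell^2$ is summed, and I would arrange that this piece carries $M\eps_0$ by using the smallness of $\sum_\ell\alpha_\ell^2$.
\item Inserting this into Proposition \ref{prop:two-body-heat-en} bounds $\mathfrak m_{j,k}$ by $C_E^2\upsilon_j\upsilon_kw_j(s)w_k(\tilde s)$, which gives \eqref{eq:bilinear-en} with constant $C_E^2$.
\item Pass from $P_jW_x$ to $P_j\Gamma_x$. Decompose $\Gamma_j=u\otimes\partial_ju^\top-\partial_ju\otimes u^\top$ as in Lemma \ref{lem:gamma-dyadic-en}. This writes $P_j\Gamma_x(s)$ as $u_{<j-10}P_j\partial_xu$ plus a high-frequency tail.
\item For the main part, $u_{<j-10}$ is bounded in $L^\infty$ and only pointwise multiplies. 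Combined with $\partial_xu=\tfrac12(W_x+\overline{W_x})$, it reduces to \eqref{eq:bilinear-en}. Here the conjugate $\overline{P_\ell W_x}$ is handled by applying Proposition \ref{prop:two-body-en} to the conjugate equation, since the $L^2_{t,x}$ bound in Lemma \ref{lem:free-bilin-en} holds equally for $e^{-it\Delta}$ on one factor.
\item The tail $\sum_{\ell\ge j-10}2^{j-\ell}P_\ell$ is summed using slow variation of the envelopes and of the factor $2^{-\frac12|\ell-k|}$.
\end{enumerate}
This recovers \eqref{eq:heat-bootstrap-bilin-en} with a constant independent of $M$, so the choice $M\gg C_E^2$ closes the argument. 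Continuity in $s$ comes from the smoothness given by Lemma \ref{lem:heat-smooth-en}, and at large $s$ the bounds hold trivially by heat decay.

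I expect the main obstacle to be the step from $W_x$ to $\Gamma_x$ in \eqref{eq:bilinearGamma-en}. The problem is that $u_{<j-10}$ does not preserve the frequency localization needed in Lemma \ref{lem:X-bilinear-en}. I would handle it with a further paraproduct split. In $u_{<j-10}P_j\partial_xu$, the factor is bounded pointwise, and the bilinear estimate is applied to $P_j\partial_xu\cdot P_kW_x$ with $u_{<j-10}$ pulled out in $L^\infty$. This is legitimate because $L^2_{t,x}$ is insensitive to multiplication by bounded functions, so no frequency localization of the product is required.
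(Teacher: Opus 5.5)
Your first paragraph is exactly the paper's proof. It reads off \eqref{eq:heat-initial-slice-en} and \eqref{eq:heat-force-en} from inside Proposition \ref{prop:heat-en}, substitutes them into Proposition \ref{prop:two-body-heat-en}, and passes to $\Gamma_x$ via Lemma \ref{lem:gamma-dyadic-en}. You are right that this is circular as it stands: \eqref{eq:heat-force-en} rests on the hypothesis \eqref{eq:heat-bootstrap-bilin-en}, which is \eqref{eq:bilinearGamma-en} itself, and Proposition \ref{prop:heat-en} never improves it.

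The gap is in your repair, at step 1. With the bootstrap constant made explicit, the low--high piece \eqref{eq:heat-transport-lh-en} is bounded by $M\upsilon_kw_k(s)\sum_{\ell\le k-20}\upsilon_\ell^2w_\ell(s)$. Since $\upsilon_\ell\ge\beta_\ell$ and $\sum_\ell\beta_\ell^2\sim E(u(t_0))$, the coefficient of $M$ is in general of energy size, bounded only by $E_0<4\pi$, not by $\eps_0$. The smallness of $\sum_\ell\alpha_\ell^2$ cannot be brought in, for two reasons:
\begin{itemize}
\item both bootstrap hypotheses are stated with $\upsilon_\ell$;
\item the two-body estimate that generates the bilinear bound has $\|U_j^{(s)}(t_0)\|_{L^2_x}$ on its right-hand side, which is controlled by $\beta_jw_j(s)$, not by $\alpha_j$.
\end{itemize}
So nothing in the argument bounds the heat-slice factors by $\alpha_\ell$ alone. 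The same objection applies to your $\eps_0^2M^2$ for the perturbative pieces. That figure comes from $\sum_\ell\|P_\ell W_x(s)\|_{L^4_{t,x}}^2\lesssim M^2\sum_\ell\alpha_\ell^2w_\ell(s)^2$, whereas \eqref{eq:heat-bootstrap-en} only gives $M^2\sum_\ell\upsilon_\ell^2w_\ell(s)^2\lesssim M^2(\eps_0^2+E_0)$.

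Consequently the forcing constant in step 1 is at least of order $1+ME_0$. Step 2 then bounds $\mathfrak m_{j,k}$ with a constant of order $(1+ME_0)^2$, and steps 3--5 return \eqref{eq:heat-bootstrap-bilin-en} with a constant of that order. The requirement $C(1+ME_0)^2\le M/2$ can be met for some $M$ only if $E_0$ is small compared with the implicit constants. That is a small-energy regime, not the subthreshold large-data regime the proposition addresses. So the continuity argument does not close, and the circularity you found is not removed. What is missing is a treatment of the low--high transport term at heat time $s$ in which $M$ is multiplied by a genuinely small factor, or an argument (for example an induction on frequency) that does not need one.

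Separately, the continuity argument has no valid starting point.
\begin{itemize}
\item At $s=0$, the hypothesis \eqref{eq:heat-bootstrap-en} is essentially the conclusion of Proposition \ref{prop:main-freq-en}, whose proof uses the present proposition.
\item At large $s$, the bounds are not trivial. For fixed $k$ the target $M\upsilon_kw_k(s)$ decays like $s^{-4}$ with the envelope factor $\upsilon_k$ attached, and it involves every component of the $X_k$ norm. Lemma \ref{lem:heat-smooth-en} only supplies $L^\infty_tL^2_x$ and $L^\infty_{t,x}$ bounds, with no envelope factor.
\end{itemize}
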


\begin{proof}
This step is already contained in the coupled continuity argument of Proposition \ref{prop:heat-en}; we record the final substitution for later use. Proposition \ref{prop:heat-en} gives
\begin{equation}\label{eq:heat-slice-X-en}
\norm{U_j^{(s)}}_{X_j(I)}
=\norm{P_jW_x(s)}_{X_j(I)}
\lesssim \upsilon_j(1+s2^{2j})^{-4},
\end{equation}
and
\begin{equation}\label{eq:heat-slice-X-tilde-en}
\norm{V_k^{(\tilde s)}}_{X_k(I)}
=\norm{P_kW_x(\tilde s)}_{X_k(I)}
\lesssim \upsilon_k(1+\tilde s2^{2k})^{-4}.
\end{equation}
The proof of Proposition \ref{prop:heat-en} also gives the initial-slice and forcing bounds
\begin{equation}\label{eq:initial-slice-en}
\norm{U_j^{(s)}(t_0)}_{L_x^2}
\lesssim \upsilon_j(1+s2^{2j})^{-4},
\qquad
\norm{V_k^{(\tilde s)}(t_0)}_{L_x^2}
\lesssim \upsilon_k(1+\tilde s2^{2k})^{-4},
\end{equation}
and
\begin{align}
\norm{F_j^{(s)}}_{Y_j(I)}
&\lesssim_{E_0} \upsilon_j(1+s2^{2j})^{-4},\label{eq:forcing-slice-en}\\
\norm{G_k^{(\tilde s)}}_{Y_k(I)}
&\lesssim_{E_0} \upsilon_k(1+\tilde s2^{2k})^{-4}.\label{eq:forcing-slice-tilde-en}
\end{align}
Combining \eqref{eq:heat-slice-X-en}--\eqref{eq:heat-slice-X-tilde-en} with \eqref{eq:initial-slice-en} and \eqref{eq:forcing-slice-en}--\eqref{eq:forcing-slice-tilde-en} gives \eqref{eq:two-body-envelope-en}. Combining \eqref{eq:two-body-envelope-en} with \eqref{eq:two-body-heat-en} gives \eqref{eq:bilinear-en}.

Finally, since
\[
\Gamma_x=u\otimes(\partial_xu)^\top-(\partial_xu)\otimes u^\top,
\qquad |u|=1,
\]
and $P_j\Gamma_x(s)$ is controlled at the same dyadic level as $P_jW_x(s)$ by Lemma \ref{lem:gamma-dyadic-en}, the first factor in \eqref{eq:bilinear-en} may be replaced by $P_j\Gamma_x(s)$, giving \eqref{eq:bilinearGamma-en}.
\end{proof}

It remains to record the perturbative bounds. Write the ambient equation \eqref{eq:ambient-W-en} as
\begin{equation}\label{eq:Ndecomp-en}
(\I\partial_t+\Delta)W_m=N_m^{\mathrm{np}}+N_m^{\mathrm{pert}},
\end{equation}
where
\begin{equation}\label{eq:Nnp-en}
N_m^{\mathrm{np}}:=-2\Gamma_j\partial_jW_m,
\end{equation}
and
\begin{equation}\label{eq:Npert-en}
N_m^{\mathrm{pert}}
:=-\Gamma_j^2W_m+\frac{\I}{2}\,\Im\hpair{W_m}{W_j}\,W_j.
\end{equation}
In the direct original-equation formulation, the perturbative terms are cleaner than in the gauge expression because Proposition \ref{prop:ambient-W-en} has already produced, at the algebraic level, the cancellation between the divergence term and the time-potential term.

\begin{lemma}[Perturbative bound]\label{lem:pert-en}
Assume the bootstrap bound
\begin{equation}\label{eq:bootstrap-en}
b_k\le \eps_0^{-1/2}\upsilon_k,
\qquad k\in\mathbb Z.
\end{equation}
If $\eps_0$ is sufficiently small, then
\begin{equation}\label{eq:pert-en}
\norm{P_kN_m^{\mathrm{pert}}}_{Y_k(I)}\lesssim \eps_0^2 b_k.
\end{equation}
\end{lemma}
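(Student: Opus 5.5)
Both terms of $N_m^{\mathrm{pert}}$ are cubic and zeroth order, so no derivatives need to be moved. By \eqref{eq:gammaZ-body-en} the first term is $\Gamma_j^2W_m=-\psi_j\ip{\psi_j}{W_m}$. The second is $\frac{\I}{2}\Im\hpair{W_m}{W_j}W_j$. With $\psi_j=\frac12(W_j+\overline{W_j})$, each term is a finite sum of trilinear expressions
\[
\mathcal C(A,B,D)=A\,\ip{B}{D},
\qquad A,B,D\in\{W_x,\overline{W_x}\},
\]
with constant coefficients. Since $P_\ell$ is a real multiplier, conjugation does not affect the dyadic norms. It therefore suffices to bound $\norm{P_k\mathcal C(A,B,D)}_{Y_k(I)}$ for a single such trilinear form.

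Decompose all three factors dyadically, $A=\sum_{\ell_1}P_{\ell_1}A$ and similarly for $B$ and $D$, and order the frequencies so that $\ell_{\max}\ge\ell_{\mathrm{med}}\ge\ell_{\min}$. In each piece, use the frequency-localized product Lemma \ref{lem:product-en}. Take $f$ to be the factor with the highest frequency $\ell_{\max}$, measured in $X_{\ell_{\max}}$, and take $h$ to be the product of the two remaining factors, measured in $L^2_{t,x}$.

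For $h$ there are two options. When the two remaining factors have separated frequencies, use the bilinear lifting Lemma \ref{lem:X-bilinear-en}, which gives $\norm{h}_{L^2}\lesssim 2^{-\frac12|\ell_{\mathrm{med}}-\ell_{\min}|}b_{\ell_{\mathrm{med}}}b_{\ell_{\min}}$. Otherwise use H\"older, $\norm{h}_{L^2}\le\norm{P_{\ell_{\mathrm{med}}}W_x}_{L^4}\norm{P_{\ell_{\min}}W_x}_{L^4}$. This gives an $\alpha$-type bound that is small through \eqref{eq:small-W-en}.

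Lemma \ref{lem:product-en} then supplies the off-diagonal factor: $2^{-\frac12(k-\ell_{\max})}$ if $\ell_{\max}\le k-80$, and $2^{-\frac16(\ell_{\max}-k)}$ if $\ell_{\max}\ge k+80$. The two frequency regimes are handled as follows.
\begin{itemize}
\item Low--low--high pieces with $|\ell_{\max}-k|\le 80$: the contribution is $b_{\ell_{\max}}\norm{h}_{L^2}$.
\item High--high--low pieces with $\ell_{\max}\ge k+80$: here $\ell_{\mathrm{med}}\sim\ell_{\max}$ is forced. The gain $2^{-\frac16(\ell_{\max}-k)}$ beats the slow variation $2^{\delta|\ell_{\max}-k|}$ of the envelope, provided $\delta<1/6$.
\end{itemize}

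The smallness is extracted from two of the three factors. On these, use $\norm{P_\ell W_x}_{L^4}\le\norm{P_\ell W_x}_{X_\ell}\le b_\ell$ together with the bootstrap \eqref{eq:bootstrap-en}, and sum in $\ell^2$:
\[
\sum_\ell b_\ell^2\le\eps_0^{-1}\sum_\ell\upsilon_\ell^2\lesssim \eps_0^{-1}\bigl(\eps_0^2+E_0\bigr).
\]
This alone is not small, which is the point of difficulty. The better route is to keep the $L^4$ norms of the two non-leading factors directly in terms of $\alpha$, namely $\norm{P_\ell W_x}_{L^4}\le\alpha_\ell$ with $\sum\alpha_\ell^2\lesssim\eps_0^2$. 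The envelope $b_k$ then appears only through the leading $X$ factor. Summing over the two lower frequencies with Cauchy--Schwarz gives $\lesssim\eps_0^2$. Summing over the leading frequency with the slow-variation rule $b_{\ell}\le 2^{\delta|\ell-k|}b_k$, against the off-diagonal gains, gives $\eps_0^2 b_k$. This is the claimed bound \eqref{eq:pert-en}.

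The main obstacle is the configuration in which the leading frequency $\ell_{\max}\le k-80$ is well below $k$. Then $P_k$ of a product of three lower-frequency pieces vanishes unless two of the pieces sit near $k$, so this case in fact reduces to the previous ones. I would check this by inspecting the Fourier supports. A second delicate point is that $h$ must be placed in $L^2_{t,x}$ without losing derivatives. This is fine because $N^{\mathrm{pert}}$ contains no derivative, so H\"older with $L^4\times L^4$ always suffices and the bilinear gain is needed only for the summation. Finally, $\eps_0$ is fixed small enough to absorb the implicit constants, which depend on $E_0$ only through Lemma \ref{lem:compare-en}.
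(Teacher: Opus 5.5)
Your strategy is the paper's: write $N_m^{\mathrm{pert}}$ as a quadratic factor times a single $W$- or $\overline W$-factor, put the quadratic part in $L^2_{t,x}$ by H\"older $L^4\times L^4$ (small by \eqref{eq:small-W-en}), and apply Lemma \ref{lem:product-en}. Your exact algebraic expansion is more careful than the paper's step from the pointwise bound $|N_m^{\mathrm{pert}}|\lesssim|W_x|^2|W_m|$, since a pointwise bound alone does not control a $Y_k$ norm.

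\textbf{The gap.} The summation step fails as written. You decompose the two non-leading factors dyadically and bound each piece by $\norm{h}_{L^2}\le\alpha_{\ell_{\mathrm{med}}}\alpha_{\ell_{\min}}$. This bound has no decay in $\ell_{\mathrm{med}}-\ell_{\min}$, so you are left with
\[
\sum_{\ell_{\min}\le\ell_{\mathrm{med}}\le\ell_{\max}}\alpha_{\ell_{\mathrm{med}}}\alpha_{\ell_{\min}}\ \ge\ \frac12\Bigl(\sum_{\ell\le\ell_{\max}}\alpha_\ell\Bigr)^2 .
\]
This is an $\ell^1$ quantity, and Cauchy--Schwarz cannot bound it by $\sum_\ell\alpha_\ell^2\lesssim\eps_0^2$. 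For example, if $\norm{P_\ell W_x}_{L^4_{t,x}}=\eps_0N^{-1/2}$ on $N$ consecutive scales, it is $\gtrsim N\eps_0^2$. The only off-diagonal decay you have comes from Lemma \ref{lem:X-bilinear-en}, which is stated in terms of $b$. As you observe yourself, $\sum_\ell b_\ell^2$ is not small under \eqref{eq:bootstrap-en}. You cannot use both the $\alpha$-smallness and the $2^{-\frac12|\ell_{\mathrm{med}}-\ell_{\min}|}$ gain on the same piece; interpolating between them only gives a loss like $\eps_0^{2-3\theta}$. So the sentence ``the bilinear gain is needed only for the summation'' is exactly where the argument breaks.

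\textbf{The fix.} Do not decompose the quadratic factor. Write
\[
N_m^{\mathrm{pert}}=-\psi_j\,\ip{\psi_j}{W_m}+\tfrac{\I}{2}\Im\hpair{W_m}{W_j}\,W_j .
\]
Each term has the form $h\,A$, with a scalar $h$ satisfying $|h|\lesssim|W_x|^2$ and $A\in\{\psi_j,W_j\}$, so that $\norm{P_\ell A}_{X_\ell}\le b_\ell$. Decompose only $A=\sum_\ell P_\ell A$.

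Lemma \ref{lem:product-en} places no frequency restriction on $h$, so its three cases combine into
\[
\norm{P_k(h\,P_\ell A)}_{Y_k(I)}\lesssim 2^{-\frac16|k-\ell|}\norm{h}_{L^2_{t,x}}\,b_\ell \quad\text{for every } \ell .
\]
Here $\norm{h}_{L^2_{t,x}}\le\norm{W_x}_{L^4_{t,x}}^2\lesssim\norm{W_x}_{\ell^2L^4_{t,x}}^2\le\eps_0^2$, using the square-function bound from the proof of Lemma \ref{lem:multiplier-en}. Summing over $\ell$ with $b_\ell\le2^{\delta|\ell-k|}b_k$ and $\delta<1/6$ gives $\eps_0^2b_k$. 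Equivalently, in your max-frequency scheme, take $h$ built from $P_{\le\ell}$ of the other two factors rather than from their dyadic pieces.

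This argument needs neither \eqref{eq:bootstrap-en} nor Lemma \ref{lem:X-bilinear-en}; it is the paper's argument made precise.

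\textbf{A minor point.} The case $\ell_{\max}\le k-80$ is not an obstacle. All three pieces then have frequency $\lesssim2^{k-78}$, so $P_k$ annihilates the product outright, and no pieces ``near $k$'' can occur there.
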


\begin{proof}
By \eqref{eq:Gammasize-en},
\[
|\Gamma_j^2W_m|\lesssim |W_x|^2|W_m|,
\qquad
\bigl|\Im\hpair{W_m}{W_j}\,W_j\bigr|\lesssim |W_x|^2|W_m|.
\]
Thus
\[
|N_m^{\mathrm{pert}}|\lesssim |W_x|^2|W_m|.
\]
Applying Lemma \ref{lem:product-en} with $h=|W_x|^2\in L^2_{t,x}$ and $f=W_m\in X_j(I)$ gives
\[
\norm{P_kN_m^{\mathrm{pert}}}_{Y_k(I)}
\lesssim \norm{|W_x|^2}_{L_{t,x}^2}\,b_k
\lesssim \norm{W_x}_{L_{t,x}^4}^2\,b_k
\lesssim \eps_0^2b_k.
\]
\end{proof}

\section{Closing and globalization}

The only genuinely nonperturbative part that remains is the first-order transport term
$-2\Gamma_j\partial_jW_m$. In order to make the closing mechanism as transparent as possible, we explicitly separate the dyadic interactions.

For fixed $k$, write
\begin{align}
\Gamma_j\partial_jW_m
&=
\sum_{\ell\le k-20}P_\ell\Gamma_j\,\partial_jP_{k+O(1)}W_m
\label{eq:LH-en}\\
&\quad+
\sum_{\substack{|k_1-k|\le 20\\ k_2\le k+20}}
P_{k_1}\Gamma_j\,\partial_jP_{k_2}W_m
+
\sum_{\substack{k_1,k_2\ge k-20\\ |k_1-k_2|\le 8}}
P_{k_1}\Gamma_j\,\partial_jP_{k_2}W_m.
\label{eq:CH-HH-en}
\end{align}
Here \eqref{eq:LH-en} is the genuinely dangerous low--high interaction, while the other two types are perturbative.

\begin{lemma}[Comparable and high--high interactions]\label{lem:transport-pert-en}
Under the smallness assumption \eqref{eq:small-W-en} and the bootstrap assumption \eqref{eq:bootstrap-en}, the contribution of \eqref{eq:CH-HH-en} satisfies
\begin{equation}\label{eq:transport-pert-en}
\norm{
P_k\Bigl(
\sum_{\substack{|k_1-k|\le 20\\ k_2\le k+20}}
P_{k_1}\Gamma_j\,\partial_jP_{k_2}W_m
+
\sum_{\substack{k_1,k_2\ge k-20\\ |k_1-k_2|\le 8}}
P_{k_1}\Gamma_j\,\partial_jP_{k_2}W_m
\Bigr)
}_{Y_k(I)}
\lesssim \eps_0^2b_k.
\end{equation}
\end{lemma}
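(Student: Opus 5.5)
We treat the two sums in \eqref{eq:CH-HH-en} separately. In each case one factor is estimated in $L^4_{t,x}$ using the smallness \eqref{eq:small-W-en} and Lemma~\ref{lem:gamma-dyadic-en}. The other factor is placed in $X_{k_2}$ via the bootstrap \eqref{eq:bootstrap-en}, and Lemma~\ref{lem:product-en} then converts the product into $Y_k$. The derivative $\partial_j$ costs a factor $2^{k_2}$, and this must be compensated either by the frequency restriction $k_2\le k+20$ or by the Bernstein-type gain hidden in the structure of $\Gamma_x$.

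\textbf{Comparable terms.} Here $|k_1-k|\le20$ and $k_2\le k+20$. We use Lemma~\ref{lem:product-en} with
\[
h=P_{k_1}\Gamma_j\quad\text{measured in } L^4\cdot L^4 \text{ form},
\]
or more simply we apply H\"older to the first atom of $Y_k$:
\[
\norm{P_k(P_{k_1}\Gamma_j\,\partial_jP_{k_2}W_m)}_{L^{4/3}_{t,x}}\le \norm{P_{k_1}\Gamma_j}_{L^4_{t,x}}\,2^{k_2}\norm{P_{k_2}W_m}_{L^2_{t,x}}.
\]
This is not scale-correct as stated. The preferred route is therefore to write $\Gamma_j=u\,\psi_j^\top-\psi_j u^\top$ and treat $h:=P_{k_1}\Gamma_j\cdot 2^{k_2-k_1}$ as an $L^2_{t,x}$-type coefficient. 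Concretely, Lemma~\ref{lem:X-bilinear-en} gives
\[
\norm{P_{k_1}\Gamma_j\,P_{k_2}W_m}_{L^2_{t,x}}\lesssim 2^{-\frac12(k_1-k_2)}\alpha_{k_1}\,b_{k_2},
\]
and $\norm{\cdot}_{Y_k}\le\norm{\cdot}_{L^{4/3}}$ after pairing with one further $L^4$ factor is not available. So instead we use the fourth (Galilean) atom exactly as in the case $j\le k-80$ of Lemma~\ref{lem:product-en}. We bound $P_{k_1}\Gamma_j$ in $L^2_{t,x}$ through $\norm{P_{k_1}\Gamma_j}_{L^4}^2$-summed pieces, and $\partial_jP_{k_2}W_m$ in $L^{2,\infty}_{\theta,W_{k_2+40}}$ with weight $2^{k_2}2^{k_2/2}b_{k_2}$. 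The resulting factor
\[
2^{-k/2}\,2^{k_2}\,2^{k_2/2}\,2^{-k_1}\sim 2^{\frac32(k_2-k)}
\]
is summable over $k_2\le k+20$. Here the $2^{-k_1}$ comes from using one derivative of $u$ on the low-regularity side, i.e.\ the $\dot H^1$ bound on $\Gamma$ by energy. Since the envelope satisfies $b_{k_2}\le 2^{\delta|k-k_2|}b_k$ and $\delta<\tfrac32$, the sum is $\lesssim b_k$ times a quantity that is quadratic in small $L^4$ norms. The full $\eps_0^2$ gain is obtained by splitting $\Gamma_j$ into $u_{<k-10}\psi_j$ plus a remainder (as in Lemma~\ref{lem:gamma-dyadic-en}), so that the coefficient carries $\norm{P_{k_1}\psi}_{L^4}\lesssim\eps_0$. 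One further $\eps_0$ comes from $\norm{W_x}_{\ell^2L^4}$ through Lemma~\ref{lem:product-en}.

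\textbf{High--high terms.} Here $k_1\sim k_2\ge k-20$. The output frequency is lower, so we use the third case of Lemma~\ref{lem:product-en}, which gains $2^{-\frac16(k_2-k)}$, or the first atom near the diagonal. Put $h=P_{k_1}\Gamma_j\,\tilde P_{k_2}W$ in $L^2_{t,x}$ via Lemma~\ref{lem:X-bilinear-en}. Alternatively, write $\partial_jP_{k_2}W_m=2^{k_2}\tilde P_{k_2}W_m$ and use the divergence structure \eqref{eq:divGamma-body-en} to move $\partial_j$ outside,
\[
P_k(\Gamma_j\partial_jW)=\partial_jP_k(\Gamma_jW)-P_k((\partial_j\Gamma_j)W),
\]
so that the outer derivative costs only $2^k$. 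The product $P_{k_1}\Gamma_j P_{k_2}W$ is then bounded in $L^2_{t,x}$ by $\alpha_{k_1}\alpha_{k_2}\lesssim\eps_0\alpha_{k_2}$. Placing $P_k$ of it into $Y_k$ through Bernstein and the first atom yields a factor $2^{k-k_2}$ times $\eps_0 b_{k_2}$. Summation over $k_2\ge k-20$ with envelope growth $2^{\delta(k_2-k)}$ converges.

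\textbf{Main obstacle.} The main obstacle is the derivative loss in the comparable case with $k_2\approx k$. There no frequency gap is available, so the estimate must rely on the $L^4\times L^4\to L^2$ structure plus exact placement into $Y_k$. We would handle this piece with the first atom, $\norm{P_{k_1}\Gamma_j\,\partial_jP_{k_2}W}_{L^{4/3}}\le\norm{P_{k_1}\Gamma}_{L^4}\,2^{k_2}\norm{P_{k_2}W}_{L^2_{t,x}}$. This is dangerous unless the window length is $2^{-2k}$. The fallback is to note that for $|k_1-k_2|\le 20$ the term can be rewritten through the divergence identity above, exactly as in the high--high case, which reduces it to the $(\partial_j\Gamma_j)W$ term. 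That term is cubic and perturbative by \eqref{eq:divGamma-body-en} and Lemma~\ref{lem:pert-en}.
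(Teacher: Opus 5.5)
There is a genuine gap. The crux of this lemma is the factor $2^{k_2}$ produced by $\partial_jP_{k_2}W_m$ when $k_2\approx k$ (comparable case) or $k_1\approx k_2\ge k$ (high--high case). You identify this correctly, but none of the devices you propose absorbs it.

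Your fallback via the divergence identity fails. By \eqref{eq:divGamma-body-en}, $(\partial_j\Gamma_j)W_m=u\,\ip{\Delta u}{W_m}=u\,\ip{\tau(u)}{W_m}$, which is linear in second derivatives of $u$; by \eqref{eq:cancel-en} it equals $-\I\Gamma_tW_m$. It is therefore a quadratic term of the same order as $\Gamma_j\partial_jW_m$, not a cubic one. It does not obey the pointwise bound $|W_x|^2|W_m|$ on which Lemma \ref{lem:pert-en} rests. Besides, in the comparable regime the outer derivative in $\partial_jP_k(\Gamma_jW_m)$ costs $2^k\sim2^{k_2}$, so nothing is gained. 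In the high--high regime the piece $P_k\bigl((\partial_jP_{k_1}\Gamma_j)P_{k_2}W_m\bigr)$ brings back $2^{k_1}$.

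Your other routes have similar defects:
\begin{itemize}
\item Applying Lemma \ref{lem:product-en} with $h=P_{k_1}\Gamma_j$ needs an $L^2_{t,x}$ bound on a single copy of $\nabla u$ on an arbitrary interval $I$, and none is available. Energy gives only $\norm{P_{k_1}\Gamma_j}_{L^\infty_tL^2_x}\lesssim E_0^{1/2}$, with no factor $2^{-k_1}$. Passing to $L^2_{t,x}$ costs $|I|^{1/2}$, which is exactly the window-length issue you noticed.
\item The Galilean case of that lemma only covers $k_2\le k-80$.
\item In the high--high paragraph, Bernstein cannot carry an $L^2_{t,x}$ bound into $L^{4/3}_{t,x}$ or any other atom of $Y_k$. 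It only raises spatial exponents and does nothing in time, so the factor $2^{k-k_2}$ has no source.
\item The count of small factors is off. Since $\ip{u}{\partial_jW_m}=-\ip{\psi_j}{W_m}$, one has $\Gamma_j\partial_jW_m=u\,\ip{\psi_j}{\partial_jW_m}+\psi_j\,\ip{\psi_j}{W_m}$, and the first term is bilinear in $(\psi_j,W_m)$. Once the $W$ factor is spent on $b_{k_2}$, only one $\eps_0$ remains (from $\norm{P_{k_1}\psi_j}_{L^4_{t,x}}$). A second one from $\norm{W_x}_{\ell^2L^4}$ would double count.
\end{itemize}

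For comparison, the paper's proof is the first-atom step you rejected. It asserts $\norm{P_k(P_{k_1}\Gamma_j\,\partial_jP_{k_2}W_m)}_{Y_k}\lesssim\norm{P_{k_1}\Gamma_j}_{L^4_{t,x}}\norm{\partial_jP_{k_2}W_m}_{L^4_{t,x}}\lesssim\eps_0^2b_k$. It then treats the high--high sum via the decay $2^{-\frac16|k_2-k|}$ in Lemma \ref{lem:product-en}. Your scaling objection applies there too:
\begin{itemize}
\item H\"older with two $L^4_{t,x}$ factors lands in $L^2_{t,x}$, which is not an atom of $Y_k$.
\item $\norm{\partial_jP_{k_2}W_m}_{L^4_{t,x}}\sim2^{k_2}\norm{P_{k_2}W_m}_{L^4_{t,x}}$ is not $\lesssim\eps_0b_k$.
\item In the high--high case, the third case of Lemma \ref{lem:product-en} again requires $P_{k_1}\Gamma_j\in L^2_{t,x}$. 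It leaves the weight $2^{k_2}2^{-\frac16(k_2-k)}$, which grows in $k_2$ unless $\norm{P_{k_1}\Gamma_j}_{L^2_{t,x}}$ decays at least like $2^{-k_1}$, and no such bound is available.
\end{itemize}
So the paper's text does not supply the missing ingredient either. What is needed is a mechanism that recovers a full derivative in the bilinear term $u\,\ip{\psi_j}{\partial_jW_m}$ at comparable and high frequencies. Possible sources would be a null structure, or a gauge in which the connection is itself quadratic in the differentiated fields, as in the caloric gauge, where $A_j$ arises by integrating a quadratic expression along the heat flow. Neither your proposal nor the paper's argument establishes such a gain.
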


\begin{proof}
We use Lemma \ref{lem:gamma-dyadic-en} and the product estimate Lemma \ref{lem:product-en}. If $|k_1-k|\le 20$, then
\[
\norm{P_k(P_{k_1}\Gamma_j\,\partial_jP_{k_2}W_m)}_{Y_k}
\lesssim
\norm{P_{k_1}\Gamma_j}_{L_{t,x}^4}
\norm{\partial_jP_{k_2}W_m}_{L_{t,x}^4}
\lesssim \eps_0^2b_k.
\]
In the high--high region, the same argument uses the decay $2^{-\frac16|k_2-k|}$ in Lemma \ref{lem:product-en} to sum over the frequency indices.
\end{proof}

\begin{lemma}[Low--high interaction]\label{lem:transport-lowhigh-en}
Under the smallness assumption \eqref{eq:small-W-en} and the bootstrap assumption \eqref{eq:bootstrap-en}, the contribution of \eqref{eq:LH-en} satisfies
\begin{equation}\label{eq:transport-lowhigh-en}
\norm{
P_k\sum_{\ell\le k-20}P_\ell\Gamma_j\,\partial_jP_{k+O(1)}W_m
}_{Y_k(I)}
\lesssim
\upsilon_k\sum_{\ell\le k-20}b_\ell\upsilon_\ell.
\end{equation}
\end{lemma}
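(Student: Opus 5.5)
We estimate each dyadic piece of the low--high sum \eqref{eq:LH-en} separately, place it in $Y_k(I)$ through the first atom $L^{4/3}_{t,x}$, and get the $L^2_{t,x}$ control of the product from the two-body estimate, not from H\"older in $L^4$. Fix $\ell\le k-20$ and write the piece as $P_k\bigl(P_\ell\Gamma_j\,\partial_j\widetilde P_kW_m\bigr)$, where $\widetilde P_k:=P_{k+O(1)}$. The derivative falls on the high-frequency factor, so Bernstein gives a factor $2^k$. The gain that compensates this $2^k$ has to come from the bilinear structure, combined with the parabolic smoothing of $\Gamma_x$ at low frequency.

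The first step is to rewrite the low-frequency connection through the heat flow. By the fundamental theorem of calculus in $s$,
\[
P_\ell\Gamma_j=-\int_0^\infty\partial_sP_\ell\Gamma_j(s)\,ds.
\]
This is legitimate because $\Gamma_x(s)\to0$ as $s\to\infty$, the heat flow converging to $Q$ below $4\pi$. We then express $\partial_s\Gamma_j$ through $\partial_su=\tau(u)$, which is quadratic: schematically $u\otimes\partial_j\tau+\partial_ju\otimes\tau$ plus transposes, where $\tau$ is a combination of $\partial_xW_x$ and $\Gamma_xW_x$. The product then becomes a sum of trilinear expressions
\[
\int_0^\infty \bigl(P_{\ell'}\partial_xW_x(s)\bigr)\bigl(\text{low or bounded factor}\bigr)\,\partial_j\widetilde P_kW_m\,ds.
\]
To each of these we apply Proposition \ref{prop:two-body-envelope-en}, in the form \eqref{eq:bilinearGamma-en} or \eqref{eq:bilinear-en}, with $\tilde s=0$. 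This bounds the $L^2_{t,x}$ norm of the high--low pair by $2^{-\frac12(k-\ell)}\upsilon_\ell\upsilon_k(1+s2^{2\ell})^{-4}$. The remaining factor is placed in $L^4_{t,x}$ or in $L^\infty$ using Lemma \ref{lem:heat-smooth-en}.

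Collecting the weights, the $s$-integral contributes $\int_0^\infty 2^{\ell}(1+s2^{2\ell})^{-4}\,ds\cdot 2^{\ell}\sim1$, which is scale invariant. The derivative on the high factor contributes $2^k$. If this loss appears, I would instead put the product in the fourth atom $L^{1,2}_{\theta,W_{k-40}}$ of $Y_k$: pair the high factor in the local-smoothing norm $L^{\infty,2}_{\theta,\lambda}$, which carries the weight $2^{-k/2}$ from $X_k$, and the low factor in the maximal norm $L^{2,\infty}_{\theta,W}$, which carries $2^{\ell/2}$. This gives
\[
2^{-k/2}\cdot 2^{k}\cdot 2^{-k/2}\cdot 2^{\ell/2}\cdot(\text{a further }2^{\ell/2}\text{ from Bernstein on }\Gamma_\ell)\cdot 2^{-\ell}\sim 1,
\]
where the $2^{-\ell}$ comes from the heat-time integration. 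The result is the bound $\upsilon_k\,b_\ell\,\upsilon_\ell$, with $b_\ell$ controlling $P_\ell W_x$ in $X_\ell$ and $\upsilon_\ell$ coming from $\Gamma_\ell$ via Lemma \ref{lem:gamma-dyadic-en}. The bootstrap \eqref{eq:bootstrap-en} is used only to guarantee $b_\ell<\infty$ while the slices are being estimated. Summing over $\ell\le k-20$ gives \eqref{eq:transport-lowhigh-en}; the sum is not summed further, so no smallness is claimed here.

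The main obstacle is the exact cancellation of the derivative loss $2^k$. This requires the local smoothing factor $2^{-k/2}$ in $X_k$ to be available for $\widetilde P_kW_m$ uniformly over the sectors $P_{j,\theta}$ and over the Galilean shifts $|\lambda|<2^{k-40}$. It also requires the low factor's Galilean maximal norm to live on the lattice $W_{\ell+40}\subset W_{k-40}$, which is the inclusion used in Lemma \ref{lem:product-en}. I would first try the direct route just described, without the heat-flow integral: put $\widetilde P_kW_m$ in $L^{\infty,2}_{\theta,\lambda}$ and $P_\ell\Gamma_j$ in $L^{2,\infty}_{\theta,W}$, with the latter controlled by $2^{\ell/2}b_\ell$ after writing $\Gamma$ in terms of $W$. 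The heat-flow representation would serve as the fallback if the maximal norm of $\Gamma_\ell$ cannot be controlled directly.
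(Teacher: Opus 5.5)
You correctly identify the uncompensated $2^k$ as the crux. However, the mechanism you propose for absorbing it is not available here, and in fact nothing can absorb it.

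\textbf{The heat-flow step.} It rests on the claim that $\partial_s\Gamma_j$ is quadratic. Differentiating $\Gamma_j=u\otimes\psi_j^\top-\psi_j\otimes u^\top$ gives
\[
\partial_s\Gamma_j=u\otimes(\partial_j\tau)^\top-(\partial_j\tau)\otimes u^\top+\tau\otimes\psi_j^\top-\psi_j\otimes\tau^\top .
\]
The first two terms are linear in $\partial_j\tau$, with coefficient $u$. That coefficient is bounded but not small, since it is essentially the constant $Q$ at low frequencies. So the integrand in $-\int_0^\infty\partial_sP_\ell\Gamma_j\,ds$ has the same linear structure as $\Gamma_j$ itself. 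The ``bounded factor'' in your trilinear expressions is $u$, it carries no envelope, and your own count $\int_0^\infty 2^{2\ell}(1+s2^{2\ell})^{-4}\,ds\sim1$ shows nothing is gained.

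\textbf{The fallback.} It fails for a related reason. If $g\in L^{\infty,2}_{\theta,\lambda}$, H\"older gives $\norm{fg}_{L^{1,2}_{\theta,\lambda}}\le\norm{f}_{L^{1,\infty}_{\theta,\lambda}}\norm{g}_{L^{\infty,2}_{\theta,\lambda}}$. A maximal-function bound $f\in L^{2,\infty}_{\theta,\lambda}$, by contrast, only puts $fg$ in $L^2_{t,x}$, which is not an atom of $Y_k$. Bernstein cannot lower the $x_1$-exponent from $2$ to $1$. In the caloric-gauge argument, the $L^{1,\infty}$ bound (and with it the second $2^{\ell/2}$ in your count) comes from the connection being a heat-time integral of a product of \emph{two} fields, each placed in a maximal-function norm. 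The ambient $\Gamma_j$ lacks this structure.

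\textbf{The estimate itself is false.} By the formula for $\Gamma_\alpha X$ following \eqref{eq:intro-complex-notation-en}, together with $\ip{u}{\partial_jW_m}=-\ip{\psi_j}{W_m}$,
\[
\Gamma_j\partial_jW_m=u\,\ip{\psi_j}{\partial_jW_m}+\psi_j\,\ip{\psi_j}{W_m},
\]
and the first term is genuinely quadratic. Concretely:
\begin{enumerate}[label=(\alph*)]
\item Take $Q=e_3$ and small data $u=Q+\eps v+O(\eps^2)$ on a fixed interval. Let $v=(\Re z,\Im z,0)$, where $z$ is a free Schr\"odinger wave with Fourier support near $2^{\ell}$ and near $2^{k}$, $\ell\le k-20$.
\item Then $W_m=\eps\,\partial_m\bar z\,(1,\I,0)+O(\eps^2)$.
\item The left side of \eqref{eq:transport-lowhigh-en} equals $\eps^2\,Q\,P_k\bigl((P_\ell\partial_jz)\,\partial_j\partial_mP_{k+O(1)}\bar z\bigr)+O(\eps^3)$. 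For suitable $z$ its $Y_k(I)$ norm is $c\,\eps^2+O(\eps^3)$ with $c>0$.
\item The right side $\upsilon_k\sum_\ell b_\ell\upsilon_\ell$ is $O(\eps^3)$, and both hypotheses \eqref{eq:small-W-en} and \eqref{eq:bootstrap-en} hold for small $\eps$.
\end{enumerate}
So \eqref{eq:transport-lowhigh-en} fails, and no argument can close it.

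\textbf{The paper's proof.} The paper's sketch has the same defect. Bounding the $Y_k$ norm by $2^k$ times an $L^2_{t,x}$ norm is not supported by any atom of $Y_k$. Even granting it, the resulting factor $2^k\cdot2^{-\frac12(k-\ell)}$ is unbounded. And ``exchanging the roles'' with Lemma \ref{lem:product-en} cannot create the third factor $b_\ell$. A cubic bound of this form needs a formulation in which the low-frequency connection is itself quadratic, as in the caloric gauge.
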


\begin{proof}
By the structure of $Y_k$, it suffices to estimate the low--high product in $L_{t,x}^2$ and extract the derivative $2^k$ from the high-frequency factor:
\[
\norm{
P_k(P_\ell\Gamma_j\,\partial_jP_{k+O(1)}W_m)
}_{Y_k}
\lesssim 2^k\norm{P_\ell\Gamma_j\;P_{k+O(1)}W_m}_{L_{t,x}^2}.
\]
We now apply Proposition \ref{prop:two-body-envelope-en} from the previous section, in particular \eqref{eq:bilinearGamma-en}:
\[
\norm{P_\ell\Gamma_j\;P_{k+O(1)}W_m}_{L_{t,x}^2}
\lesssim 2^{-\frac12(k-\ell)}\upsilon_\ell\upsilon_k.
\]
To recover the low-frequency factor as the envelope $b_\ell$, we exchange the roles of the two factors and combine the result with Lemma \ref{lem:product-en}. This gives
\[
\norm{
P_k(P_\ell\Gamma_j\,\partial_jP_{k+O(1)}W_m)
}_{Y_k}
\lesssim 2^{-\frac12(k-\ell)}b_\ell\upsilon_\ell\upsilon_k.
\]
Summing over $\ell\le k-20$ yields \eqref{eq:transport-lowhigh-en}.
\end{proof}

\begin{proposition}[Transport term]\label{prop:transport-en}
Under the smallness assumption \eqref{eq:small-W-en} and the bootstrap assumption \eqref{eq:bootstrap-en},
\begin{equation}\label{eq:transport-en}
\norm{P_k(\Gamma_j\partial_jW_m)}_{Y_k(I)}
\lesssim
\upsilon_k\sum_{\ell\le k-20}b_\ell\upsilon_\ell
+\eps_0^2b_k.
\end{equation}
\end{proposition}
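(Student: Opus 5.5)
The estimate \eqref{eq:transport-en} follows by combining the two preceding lemmas through the dyadic splitting \eqref{eq:LH-en}--\eqref{eq:CH-HH-en}. Fix $k$ and $m$. We first check that the three sums in \eqref{eq:LH-en}--\eqref{eq:CH-HH-en} exhaust all pairs $(k_1,k_2)$ of frequencies of $(\Gamma_j,W_m)$ for which $P_k(P_{k_1}\Gamma_j\,\partial_jP_{k_2}W_m)\neq0$. By frequency support, such a pair must satisfy one of the following:
\begin{itemize}
\item[(i)] $k_1\le k-20$ and $|k_2-k|=O(1)$;
\item[(ii)] $|k_1-k|\le 20$ and $k_2\le k+20$;
\item[(iii)] $k_1,k_2\ge k-20$ with $|k_1-k_2|\le 8$.
\end{itemize}
The remaining configurations, $k_2\gg k_1$ with $k_2\ge k+20$, or $k_1\ge k+20$ with $|k_1-k_2|>8$, produce output frequency $\sim 2^{\max(k_1,k_2)}$ and are annihilated by $P_k$. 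Any finite overlap between the regions is harmless: we assign each pair to exactly one region, and the constants change by a bounded factor.

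Next we apply the triangle inequality in $Y_k(I)$. The atomic norm is a genuine norm, being an infimum of sums of norms over decompositions. This gives
\[
\norm{P_k(\Gamma_j\partial_jW_m)}_{Y_k}\le \norm{P_k(\mathrm{LH})}_{Y_k}+\norm{P_k(\mathrm{CH}+\mathrm{HH})}_{Y_k}.
\]
Lemma \ref{lem:transport-lowhigh-en} bounds the first term by $\upsilon_k\sum_{\ell\le k-20}b_\ell\upsilon_\ell$. Lemma \ref{lem:transport-pert-en} bounds the second by $\eps_0^2b_k$. Both lemmas hold under the standing assumptions \eqref{eq:small-W-en} and \eqref{eq:bootstrap-en}, and summing over $j=1,2$ only affects constants. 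This yields \eqref{eq:transport-en}.

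The main point requiring care is the bookkeeping of the splitting in the comparable region. The sum in \eqref{eq:CH-HH-en} with $|k_1-k|\le20$ and $k_2\le k+20$ overlaps the high--high sum when $k_2\ge k-20$. We handle this by restricting the high--high piece to pairs not already counted, which again only changes constants. We also need the $O(1)$ window in $P_{k+O(1)}$ in \eqref{eq:LH-en} to be wide enough to capture every $k_2$ with $k_1\le k-20$. This holds because $|k_2-k|\le 2$ is forced in that case.
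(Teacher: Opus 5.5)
Your proposal is correct and is the paper's argument: the paper proves Proposition~\ref{prop:transport-en} by adding \eqref{eq:transport-pert-en} and \eqref{eq:transport-lowhigh-en} via the triangle inequality in $Y_k(I)$. Your extra bookkeeping of the frequency regions and their overlaps is a harmless refinement that the paper leaves implicit (it writes the overlapping splitting \eqref{eq:LH-en}--\eqref{eq:CH-HH-en} as an identity), apart from one cosmetic slip: your list of configurations annihilated by $P_k$ omits, for instance, the pairs with $k_1<k-20$ and $k_2\le k-3$, which frequency support also kills, so your covering claim still holds.
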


\begin{proof}
The assertion is the direct combination of Lemma \ref{lem:transport-pert-en} and Lemma \ref{lem:transport-lowhigh-en}; at the level of formulas this is exactly \eqref{eq:transport-pert-en} together with \eqref{eq:transport-lowhigh-en}.
\end{proof}

Combining Proposition \ref{prop:linear-en}, Lemma \ref{lem:pert-en}, and Proposition \ref{prop:transport-en}, we obtain the fundamental bootstrap inequality.

\begin{proposition}[Bootstrap inequality]\label{prop:bootineq-en}
Under the small controlling-norm assumption \eqref{eq:small-W-en} and the bootstrap assumption \eqref{eq:bootstrap-en},
\begin{equation}\label{eq:bootineq-en}
b_k\lesssim \upsilon_k + \upsilon_k\sum_{\ell\le k-20}b_\ell\upsilon_\ell.
\end{equation}
\end{proposition}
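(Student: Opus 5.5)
The plan is to fix $j\in\mathbb Z$ and estimate $\norm{P_jW_x}_{X_j(I)}$ via Proposition \ref{prop:linear-en}. Then I convert the result into the envelope inequality using the slow-variation weights $2^{-\delta|k-j|}$. First I apply $P_j$ to the ambient equation in the split form \eqref{eq:Ndecomp-en}. The initial datum $P_jW_x(t_0)$ is frequency-supported in $\Xi_j$, so Proposition \ref{prop:linear-en} (after harmless time rescaling so that $I\subset(0,2^{2K}]$) gives
\[
\norm{P_jW_m}_{X_j(I)}\lesssim \norm{P_jW_x(t_0)}_{L_x^2}+\norm{P_jN_m^{\mathrm{pert}}}_{Y_j(I)}+2\norm{P_j(\Gamma_\ell\partial_\ell W_m)}_{Y_j(I)}.
\]
The first term is bounded by $\beta_j\le\upsilon_j$. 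The second is bounded by Lemma \ref{lem:pert-en}, giving $\lesssim\eps_0^2b_j$. The third is bounded by Proposition \ref{prop:transport-en}, giving $\lesssim \upsilon_j\sum_{\ell\le j-20}b_\ell\upsilon_\ell+\eps_0^2b_j$. Summing over $m=1,2$ we obtain, for every $j$,
\[
\norm{P_jW_x}_{X_j(I)}\le C\upsilon_j+C\upsilon_j\sum_{\ell\le j-20}b_\ell\upsilon_\ell+C\eps_0^2 b_j .
\]

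Next I pass to envelopes. Multiply by $2^{-\delta|k-j|}$ and take the supremum over $j$. The slowly varying property gives $2^{-\delta|k-j|}\upsilon_j\le\upsilon_k$ (up to a constant, since $\upsilon_k$ is itself a $\delta$-envelope) and likewise $2^{-\delta|k-j|}b_j\le b_k$. The delicate term is $2^{-\delta|k-j|}\upsilon_j\sum_{\ell\le j-20}b_\ell\upsilon_\ell$. When $j\le k$ the sum is over $\ell\le j-20\le k-20$, so it is bounded by the sum up to $k-20$, which is what we want. When $j>k$ the sum includes the extra range $k-20<\ell\le j-20$. In that range I use the envelope bounds $b_\ell\le 2^{\delta(\ell-k)}b_k$ and $\upsilon_\ell\le 2^{\delta(\ell-k)}\upsilon_k$, together with the bootstrap hypothesis \eqref{eq:bootstrap-en}. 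Rather than absorbing this piece, the cleaner route is to note that in Lemma \ref{lem:transport-lowhigh-en} each term actually carried the factor $2^{-\frac12(j-\ell)}$ before summation. I would retain that factor, writing
\[
\upsilon_j\sum_{\ell\le j-20}2^{-\frac12(j-\ell)}b_\ell\upsilon_\ell .
\]
With this gain, the weight $2^{-\delta|k-j|}2^{-\frac12(j-\ell)}$ with $\delta\ll\frac12$ lets the range $\ell>k-20$ be dominated by $C\eps_0^{-1/2}\upsilon_k\,\sup_{\ell}\upsilon_\ell^2$. This is in turn $\lesssim \eps_0^{3/2}\upsilon_k$, since $\upsilon_\ell^2\lesssim\eps_0^2+\beta_\ell^2$. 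That is not small because of $\beta$, so I would instead bound the piece directly by $\upsilon_k\sum_{\ell\le k-20}b_\ell\upsilon_\ell$ plus a remainder. The remainder is absorbed into the $\eps_0^2b_k$ term through the $\ell^2$ smallness of the $\alpha_\ell$ part.

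Finally, the term $C\eps_0^2b_k$ is absorbed into the left-hand side once $\eps_0$ is small; this requires $b_k<\infty$, which holds because the solution is smooth ($\dotH^1\cap\dotH^3$). This yields \eqref{eq:bootineq-en}. I expect the main obstacle to be the envelope bookkeeping in the $j>k$ range of the low–high sum: Proposition \ref{prop:transport-en} as stated has discarded the off-diagonal decay, so I would first try to reinstate the $2^{-\frac12(j-\ell)}$ factor from the proof of Lemma \ref{lem:transport-lowhigh-en} and use $\delta<1/4$ to sum.
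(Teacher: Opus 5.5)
Up to the per-frequency inequality you follow the paper exactly: Proposition \ref{prop:linear-en} applied to $P_j$ of \eqref{eq:Ndecomp-en}, then Lemma \ref{lem:pert-en} and Proposition \ref{prop:transport-en}. The paper then concludes by absorbing $\eps_0^2b_k$ and taking the slowly varying envelope supremum over $j$, without further comment. You are right that this last step is not automatic, but the repair you sketch does not close it, so your argument has a gap exactly at the obstacle you identified. For $j>k$, even granting the decay you want to reinstate from the proof of Lemma \ref{lem:transport-lowhigh-en}, the leftover piece is
\[
R_{j,k}:=2^{-\delta(j-k)}\,\upsilon_j\sum_{k-20<\ell\le j-20}2^{-\frac12(j-\ell)}\,b_\ell\upsilon_\ell .
\]
This involves $b_\ell,\upsilon_\ell$ only at frequencies above $k-20$. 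Trading them for quantities at frequencies $\le k-20$ costs at least $2^{\delta(j-k)+2\delta(\ell-k)}$, and the weights cannot pay this when $\ell$ is close to $j\gg k$. So $R_{j,k}$ does not reduce to $\upsilon_k\sum_{\ell\le k-20}b_\ell\upsilon_\ell$. It must instead be absorbed into the left side, or bounded by $C\upsilon_k$ with $C$ independent of $\eps_0$.

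Neither option is available. Using $b_\ell\le2^{\delta(\ell-k)}b_k$ and $\upsilon_j\le2^{\delta(j-\ell)}\upsilon_\ell$ gives $R_{j,k}\lesssim b_k\sup_\ell\upsilon_\ell^2$. Using \eqref{eq:bootstrap-en} instead gives $R_{j,k}\lesssim\eps_0^{-1/2}\upsilon_k\sup_\ell\upsilon_\ell^2$. If you split $\upsilon=\alpha+\beta$, the contributions with at least one $\alpha$ factor are indeed $O(\eps_0)\,b_k$ and can be absorbed. The $\beta_j\,b_\ell\,\beta_\ell$ contribution cannot, because $\beta_\ell$ is only bounded by $E_0^{1/2}$ (the energy at $t_0$ may sit in one dyadic shell). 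The first bound then gives $O(E_0)\,b_k$, which is not absorbable. The second introduces an $\eps_0^{-1/2}$ loss, which makes the implicit constant in \eqref{eq:bootineq-en} depend on $\eps_0$ and destroys the improvement of \eqref{eq:bootstrap-en} in the proof of Proposition \ref{prop:main-freq-en}. Your final sentence, that the remainder is absorbed into $\eps_0^2b_k$ ``through the $\ell^2$ smallness of the $\alpha_\ell$ part'', skips precisely this $\beta\beta$ term, which you had just noticed is not small.

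A rigorous closing would have to avoid enveloping the low--high sum altogether. Suppose you had a version of Proposition \ref{prop:transport-en} in which the low-frequency factor enters through the dyadic norm $x_\ell:=\norm{P_\ell W_x}_{X_\ell(I)}$ rather than $b_\ell$. Then the only remaining envelope is $\eps_0^2b_j\le\eps_0^{3/2}\upsilon_j$, and the discrete Gronwall argument can be run on $x_j$ itself to give $x_j\lesssim\upsilon_j$, hence $b_k\lesssim\upsilon_k$. However, this bypasses \eqref{eq:bootineq-en} and relies on an estimate the paper does not state.
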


\begin{proof}
Apply $P_k$ to \eqref{eq:Ndecomp-en}, and use the free linear estimate \eqref{eq:linear-en}:
\[
\norm{P_kW_m}_{X_k(I)}
\lesssim \norm{P_kW_m(t_0)}_{L_x^2}
+\norm{P_kN_m^{\mathrm{np}}}_{Y_k(I)}
+\norm{P_kN_m^{\mathrm{pert}}}_{Y_k(I)}.
\]
Substituting the estimates \eqref{eq:pert-en} and \eqref{eq:transport-en}, which follow from \eqref{eq:Nnp-en} and \eqref{eq:Npert-en}, and absorbing the term $\eps_0^2b_k$ for sufficiently small $\eps_0$, we finally take the slowly varying envelope supremum over $j$.
\end{proof}

We now first close the frequency-envelope assertion stated in Section 2.

\begin{proof}[Proof of Proposition \ref{prop:main-freq-en}]
By a continuity argument, it suffices to obtain an improved estimate under the bootstrap assumption \eqref{eq:bootstrap-en}. The inequality \eqref{eq:bootineq-en} in Proposition \ref{prop:bootineq-en} gives
\[
b_k\lesssim \upsilon_k+
\upsilon_k\sum_{\ell\le k-20}b_\ell\upsilon_\ell.
\]
Squaring and applying Cauchy--Schwarz, we obtain
\begin{equation}\label{eq:square-en}
b_k^2\lesssim
\Bigl(1+\sum_{\ell\le k-20}b_\ell^2\Bigr)\upsilon_k^2.
\end{equation}
Define
\[
B_k:=1+\sum_{\ell<k}b_\ell^2.
\]
Then \eqref{eq:square-en} implies
\[
B_{k+1}\le B_k(1+C\upsilon_k^2)
\]
for an absolute constant $C>0$. Hence
\[
B_{k+m}\le B_k\prod_{r=0}^{m-1}(1+C\upsilon_{k+r}^2)
\le B_k\exp\Bigl(C\sum_{r=0}^{m-1}\upsilon_{k+r}^2\Bigr).
\]
Moreover,
\[
\sum_k\upsilon_k^2
\lesssim
\norm{W_x}_{\ell^2L_{t,x}^4(I\times\R^2)}^2
+
\norm{W_x(t_0)}_{L_x^2}^2
\lesssim \eps_0^2+E_0.
\]
Thus the right-hand side is uniformly bounded. Since $B_k\to1$ as $k\to-\infty$, we get
\[
\sup_kB_k\lesssim 1.
\]
Returning to \eqref{eq:square-en}, we obtain
\[
b_k\lesssim \upsilon_k,
\]
which is precisely \eqref{eq:techconcl-en}.
\end{proof}

\begin{proof}[Proof of Proposition \ref{prop:cont-step-en}]
By Lemma \ref{lem:compare-en}, the smallness condition in Proposition \ref{prop:cont-step-en} is equivalent to \eqref{eq:small-W-en}. Proposition \ref{prop:main-freq-en} then gives, for each integer $\sigma\ge0$, control of the $\dot H^\sigma$ norm of $\nabla u(t_1)$:
\[
\norm{\nabla u(t_1)}_{\dot H^\sigma}^2
\lesssim \sum_k2^{2\sigma k}b_k(\sigma-1)^2
\lesssim \sum_k2^{2\sigma k}\upsilon_k(\sigma-1)^2
\lesssim \norm{\nabla u(t_0)}_{\dot H^\sigma}^2
+
\norm{\nabla u}_{\ell^2L_{t,x}^4(I\times\R^2)}^2.
\]
In particular, $u(t_1)\in\dot H^1\cap\dot H^3$, and this control depends only on $E_0$ and on the $\dot H^1\cap\dot H^3$ norm of the initial data. The classical local theory therefore extends the solution to a slightly larger interval $[t_0,t_1+T]$, where $T=T(E_0)>0$ depends only on the above a priori control. This proves Proposition \ref{prop:cont-step-en}.
\end{proof}

After the frequency-envelope proposition has been proved, we prove the two theorems from the introduction. First the local continuation proposition yields globalization from the finiteness of the controlling norm; then we prove the local one-window continuation proposition and derive the second globalization criterion.

\begin{proof}[Proof of Theorem \ref{thm:globalize-en}]
Suppose, to the contrary, that $T_+<\infty$ and that \eqref{eq:forward-finite-en} holds. Since
\[
\norm{\nabla u}_{\ell^2L^4_{t,x}([0,T_+)\times\R^2)}<\infty,
\]
absolute continuity gives $t_*\in(0,T_+)$ such that
\[
\norm{\nabla u}_{\ell^2L^4_{t,x}([t_*,T_+)\times\R^2)}
\le \frac12\eps_0.
\]
Next choose $t_1\in(t_*,T_+)$ sufficiently close to $T_+$ so that
\[
T_+-t_1<\frac12T(E_0,\eps_0),
\]
where $T(E_0)$ is the extension time supplied by Proposition \ref{prop:cont-step-en}. Then
\[
\norm{\nabla u}_{\ell^2L^4_{t,x}([t_*,t_1]\times\R^2)}
\le \frac12\eps_0<\eps_0.
\]
Thus Proposition \ref{prop:cont-step-en} applies on the interval $[t_*,t_1]$ and extends the solution to
\[
[t_*,\,t_1+T(E_0,\eps_0)].
\]
But $T_+-t_1<\frac12T(E_0,\eps_0)$ implies $t_1+T(E_0,\eps_0)>T_+$, contradicting the maximality of $I_{\max}=[0,T_+)$. Hence $T_+=+\infty$.
\end{proof}

We next prove the one-window continuation proposition. It follows from Proposition \ref{prop:cont-step-en} and will imply Theorem \ref{thm:window-global-en}.

\begin{proposition}[One-window continuation]\label{prop:one-scale-en}
Fix $E_0<4\pi$. There exist constants $C_0\ge20$, $\eps_* =\eps_*(E_0)>0$, and $T_* =T_*(E_0)>0$ such that the following two statements hold.
\begin{enumerate}[label=\textup{(\roman*)},leftmargin=2.2em]
\item
Let $I=[t_0,t_*]\subset[0,\infty)$, and let
\[
u:I\times\R^2\to\Sph^2
\]
be a smooth solution of \eqref{eq:SM-en} satisfying
\[
u(t_0)-Q\in L_x^2,
\qquad
u(t_0)\in \dotH^1\cap\dotH^3,
\qquad
E(u)\le E_0.
\]
If there exist $k_*\in\mathbb Z$ and an interval
\[
J_*=[t_*-2^{-2k_*},t_*]\subset I
\]
such that
\begin{equation}\label{eq:one-scale-ass-en}
\Biggl(\sum_{j\ge k_*-C_0}
\norm{P_j\nabla u}_{L^4_{t,x}(J_*\times\R^2)}^2\Biggr)^{1/2}
\le \eps_*,
\end{equation}
then $u$ can be uniquely extended to
\[
u\in L_t^\infty\bigl([t_0,t_*+T_*]:\dot H^1\cap\dot H^3\bigr).
\]
\item
Let $u$ be the maximal-lifespan solution of \eqref{eq:SM-en}, with lifespan interval $I_{\max}=[0,T_+)$, and suppose $T_+<\infty$. Then there exists $\rho=T_*/2>0$ such that for all $t\in(T_+-\rho,T_+)$ and all $k$ satisfying $J_{t,k}=[t-2^{-2k},t]\subset[0,T_+)$, one has
\[
\Biggl(\sum_{j\ge k-C_0}
\norm{P_j\nabla u}_{L^4_{t,x}(J_{t,k}\times\R^2)}^2\Biggr)^{1/2}
\ge \eps_*.
\]
\end{enumerate}
\end{proposition}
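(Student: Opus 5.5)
The plan is to reduce part (i) to Proposition \ref{prop:cont-step-en} applied on the window $J_*$, and then to obtain part (ii) from part (i) by contradiction, exactly as in the proof of Theorem \ref{thm:globalize-en}.

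The main point in (i) is that \eqref{eq:one-scale-ass-en} only controls frequencies $j\ge k_*-C_0$. Proposition \ref{prop:cont-step-en}, however, needs the full $\ell^2L^4_{t,x}$ norm on $J_*$ to be small. I would split
\[
\norm{\nabla u}_{\ell^2L^4_{t,x}(J_*\times\R^2)}^2=\sum_{j<k_*-C_0}\norm{P_j\nabla u}_{L^4_{t,x}(J_*)}^2+\sum_{j\ge k_*-C_0}\norm{P_j\nabla u}_{L^4_{t,x}(J_*)}^2 .
\]
The second sum is at most $\eps_*^2$ by hypothesis. For the first sum, Bernstein and H\"older in time on the window, which has length $|J_*|=2^{-2k_*}$, give
\[
\norm{P_j\nabla u}_{L^4_{t,x}(J_*)}\le|J_*|^{1/4}\norm{P_j\nabla u}_{L^\infty_tL^4_x}\lesssim 2^{-k_*/2}2^{j/2}\norm{P_j\nabla u}_{L^\infty_tL^2_x}.
\]
Summing over $j<k_*-C_0$ then yields
\[
\sum_{j<k_*-C_0}\norm{P_j\nabla u}_{L^4_{t,x}(J_*)}^2\lesssim 2^{-C_0}\sup_t\norm{\nabla u(t)}_{L^2_x}^2\lesssim 2^{-C_0}E_0 .
\]
This is where $C_0$ is chosen: take $C_0\ge20$ large enough that $C2^{-C_0}E_0\le\eps_0^2/(4C(E_0)^2)$, and take $\eps_*\le\eps_0/(2C(E_0))$, where $C(E_0)$ and $\eps_0$ come from Lemma \ref{lem:compare-en} and Proposition \ref{prop:cont-step-en}. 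Then $\norm{\nabla u}_{\ell^2L^4(J_*)}\le\eps_0$.

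Next I would apply Proposition \ref{prop:cont-step-en} on $J_*$ with initial time $t_*-2^{-2k_*}$. Its hypotheses hold there: energy conservation gives $E\le E_0$, and persistence of regularity on $I$ in the classical local theory gives $u(t_*-2^{-2k_*})-Q\in L^2$ and $\dotH^1\cap\dotH^3$. The proposition extends the solution to $[t_*-2^{-2k_*},t_*+T]$. Gluing with $u$ on $I$ and using uniqueness gives the extension to $[t_0,t_*+T_*]$ with $T_*:=T(E_0)$.

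I expect the main obstacle to be that $T(E_0)$ in Proposition \ref{prop:cont-step-en} is asserted to depend only on $E_0$. Its proof actually ties the extension time to the $\dotH^3$ control at the endpoint, which scales with the window. If that uniformity is questioned, I would instead use the scale-invariant version: obtain an extension of length comparable to $2^{-2k_*}$ by rescaling $J_*$ to unit length. I would then note that (ii) only requires some fixed-length extension beyond $T_+$, and that a fixed length is available when the windows near $T_+$ have bounded-below length. Otherwise I would accept the statement as given and use $T_*=T(E_0)$.

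For (ii), suppose $T_+<\infty$ and set $\rho=T_*/2$. Suppose also that for some $t\in(T_+-\rho,T_+)$ and some admissible $k$ the tail is less than $\eps_*$. Apply (i) with $I=[0,t]$, $t_*=t$ and $k_*=k$. The solution then extends to $[0,t+T_*]$. Since $t+T_*>T_+-\rho+2\rho>T_+$, this contradicts the maximality of $I_{\max}$, which proves the lower bound.
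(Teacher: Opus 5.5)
This is essentially the paper's proof: you split at $k_*-C_0$, bound the low frequencies by Bernstein--H\"older on the window of length $2^{-2k_*}$ (Lemma \ref{lem:low-auto-en}), apply Proposition \ref{prop:cont-step-en} on $J_*$ with $T_*=T(E_0)$, and get (ii) by contradiction with $\rho=T_*/2$; your low-frequency bound via $\sup_t\norm{\nabla u(t)}_{L^2_x}^2$ and the geometric series, and your check of the hypotheses at $t_*-2^{-2k_*}$ followed by gluing through uniqueness, are slightly more careful than the paper's version. The uniformity worry you raise is well founded, since the hypotheses of Proposition \ref{prop:cont-step-en} are scale-invariant while a fixed extension time $T(E_0)$ is not, but the paper's proof takes $T_*=T(E_0)$ from that proposition exactly as you do, so relative to the paper your derivation is complete.
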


The key point in part \textup{(i)} of Proposition \ref{prop:one-scale-en} is that, on a dyadic time window of length $2^{-2k_*}$, the low-frequency part is automatically small enough.

\begin{lemma}[Automatic low-frequency smallness]\label{lem:low-auto-en}
Let $J_*=[t_*-2^{-2k_*},t_*]$. Then, for every integer $C_0\ge10$,
\begin{equation}\label{eq:lowauto-en}
\sum_{j\le k_*-C_0}
\norm{P_j\nabla u}_{L_{t,x}^4(J_*\times\R^2)}^2
\lesssim 2^{-C_0}E(u).
\end{equation}
\end{lemma}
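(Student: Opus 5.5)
The plan is to bound each low-frequency piece crudely, using only the conserved energy and the short length of the window, and then sum the geometric series. Fix $j\le k_*-C_0$. By H\"older in time on $J_*$, whose length is $|J_*|=2^{-2k_*}$,
\[
\norm{P_j\nabla u}_{L^4_{t,x}(J_*\times\R^2)}
\le |J_*|^{1/4}\norm{P_j\nabla u}_{L_t^\infty L_x^4(J_*\times\R^2)}
=2^{-k_*/2}\norm{P_j\nabla u}_{L_t^\infty L_x^4}.
\]
Next I apply the Bernstein inequality already used in \eqref{eq:free-max-en}, namely $\norm{P_jh}_{L_x^4}\lesssim 2^{j/2}\norm{P_jh}_{L_x^2}$, at each time. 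Together with energy conservation (or simply $E(u(t))\le E(u)$ on $I$), this gives
\[
\norm{P_j\nabla u}_{L^4_{t,x}(J_*\times\R^2)}
\lesssim 2^{(j-k_*)/2}\sup_{t\in J_*}\norm{P_j\nabla u(t)}_{L_x^2}.
\]

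Squaring and summing over $j\le k_*-C_0$ gives
\[
\sum_{j\le k_*-C_0}\norm{P_j\nabla u}_{L^4_{t,x}(J_*)}^2
\lesssim \sum_{j\le k_*-C_0}2^{j-k_*}\sup_{t}\norm{P_j\nabla u(t)}_{L_x^2}^2 .
\]
I bound each $\sup_t\norm{P_j\nabla u(t)}_{L^2_x}^2$ by $\sup_t\norm{\nabla u(t)}_{L^2_x}^2\le 2E(u)$. The remaining geometric sum is $\sum_{j\le k_*-C_0}2^{j-k_*}\lesssim 2^{-C_0}$, which yields \eqref{eq:lowauto-en}. An alternative is to keep the supremum inside and use almost orthogonality of the $P_j$. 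That does not work directly, because the supremum in $t$ sits outside the sum. The crude bound by the full energy is enough here, since the gain comes entirely from the geometric factor $2^{j-k_*}$.

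There is no real obstacle. The only point to check is the scaling: the time H\"older factor $2^{-k_*/2}$ combines with the Bernstein factor $2^{j/2}$ to give exactly $2^{(j-k_*)/2}$, and this is what makes the low-frequency part small on a window of parabolic length $2^{-2k_*}$. The implicit constant depends only on the Littlewood--Paley cutoff. It does not depend on $k_*$, on $t_*$, or on $E_0$, beyond the linear factor $E(u)$.
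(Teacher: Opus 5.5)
Your proof is correct and follows the same route as the paper. Both use H\"older in time on the window of length $2^{-2k_*}$, then Bernstein $\norm{P_jh}_{L^4_x}\lesssim 2^{j/2}\norm{P_jh}_{L^2_x}$ at each time, then the geometric factor $2^{j-k_*}$.

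Your last step is more careful than the paper's. The paper passes to $2^{-C_0}\sum_j\norm{P_j\nabla u}_{L_t^\infty L_x^2}^2$ and cites energy conservation. Conservation only controls $\sup_t\sum_j\norm{P_j\nabla u(t)}_{L^2_x}^2$, not this larger $\ell^2_jL^\infty_t$ sum. Bounding each dyadic piece by $2E(u)$ before summing, as you do, closes the estimate cleanly.
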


\begin{proof}
By H\"older and Bernstein,
\[
\norm{P_j\nabla u}_{L_{t,x}^4(J_*\times\R^2)}
\le |J_*|^{1/4}\norm{P_j\nabla u}_{L_t^\infty L_x^4}
\lesssim 2^{-k_*/2}2^{j/2}\norm{P_j\nabla u}_{L_t^\infty L_x^2}.
\]
Squaring and summing, we obtain
\[
\sum_{j\le k_*-C_0}
\norm{P_j\nabla u}_{L_{t,x}^4(J_*\times\R^2)}^2
\lesssim
\sum_{j\le k_*-C_0}2^{j-k_*}\norm{P_j\nabla u}_{L_t^\infty L_x^2}^2
\lesssim
2^{-C_0}\sum_j\norm{P_j\nabla u}_{L_t^\infty L_x^2}^2,
\]
and the last quantity is controlled by energy conservation.
\end{proof}

\begin{proof}[Proof of Proposition \ref{prop:one-scale-en}\textup{(i)}]
Fix a constant $C_0\ge20$ to be chosen. By the estimate \eqref{eq:lowauto-en} in Lemma \ref{lem:low-auto-en},
\[
\sum_{j<k_*-C_0}
\norm{P_j\nabla u}_{L_{t,x}^4(J_*\times\R^2)}^2
\lesssim 2^{-C_0}E_0.
\]
Choosing $C_0$ sufficiently large makes this low-frequency part smaller than $\eps_0^2/4$. On the other hand, assumption \eqref{eq:one-scale-ass-en} says that the high-frequency tail is smaller than $\eps_*^2$. If $\eps_*$ is chosen sufficiently small so that
\[
\eps_*^2+C(E_0)^{-2}\cdot 2^{-C_0}E_0\le \frac14\eps_0^2,
\]
and if we use that the number of intermediate frequency layers is only $O(C_0)$, then
\[
\norm{\nabla u}_{\ell^2L_{t,x}^4(J_*\times\R^2)}
\le \eps_0.
\]
Proposition \ref{prop:cont-step-en} can therefore be applied on the interval $J_*$, and it uniquely extends the solution to $t_*+T_*$, where $T_*=T(E_0)$.
\end{proof}

\begin{proof}[Proof of Proposition \ref{prop:one-scale-en}\textup{(ii)}]
Let $\rho=T_*/2$. If the conclusion failed, then there would exist $t\in(T_+-\rho,T_+)$ and $k$ such that $J_{t,k}\subset[0,T_+)$ and
\[
\Biggl(\sum_{j\ge k-C_0}
\norm{P_j\nabla u}_{L^4_{t,x}(J_{t,k}\times\R^2)}^2\Biggr)^{1/2}
<\eps_*.
\]
Applying part \textup{(i)} of Proposition \ref{prop:one-scale-en} to the interval $[0,t]$ and the window $J_{t,k}$, we extend the solution to $t+T_*$. Since $t>T_+-T_*/2$, we have $t+T_*>T_+$, contradicting maximality of the lifespan.
\end{proof}

\begin{proof}[Proof of Theorem \ref{thm:window-global-en}]
Take $C_0,\eps_*,T_*$ from Proposition \ref{prop:one-scale-en}, and set $\rho:=T_*/2$. If $T_+<\infty$ and \eqref{eq:window-global-ass-en} holds, then one may choose $t_*\in(T_+-\rho,T_+)$ and $k_*\in\mathbb Z$ such that $J_{t_*,k_*}\subset[0,T_+)$ and
\[
\Biggl(\sum_{j\ge k_*-C_0}
\norm{P_j\nabla u}_{L^4_{t,x}(J_{t_*,k_*}\times\R^2)}^2\Biggr)^{1/2}
<\eps_*.
\]
Applying part \textup{(i)} of Proposition \ref{prop:one-scale-en} to the interval $[0,t_*]$, we obtain an extension of the solution to $t_*+T_*$. Since $t_*>T_+-T_*/2$, one has $t_*+T_*>T_+$, contradicting the maximality of $T_+$. Thus any maximal solution satisfying \eqref{eq:window-global-ass-en} must have $T_+=+\infty$.

The equivalent statement \eqref{eq:late-window-lower-en} is exactly the same conclusion as part \textup{(ii)} of Proposition \ref{prop:one-scale-en}, with $\rho=T_*/2$ inserted into the theorem statement in Section 1.
\end{proof}

\end{document}